\newcommand\rurl[1]{%
  \href{http://#1}{\nolinkurl{#1}}%
}
\def\DD{D\kern-.7em\raise0.4ex\hbox{\char '55}\kern.33em}
\def\subclassname{{\bfseries Mathematics Subject Classification
(2010)}\enspace}
\def\subclass#1{\par\addvspace\medskipamount{\rightskip=0pt plus1cm
\def\and{\ifhmode\unskip\nobreak\fi\ $\cdot$
}\noindent\subclassname\ignorespaces#1\par}}
\spnewtheorem{dn}{Definition}[section]{\bfseries}{\upshape}
\spnewtheorem{cy}[dn]{Note}{\bfseries}{\upshape}
\spnewtheorem{nota}[dn]{Notation}{\bfseries}{\upshape}
\spnewtheorem{rem}[dn]{Remark}{\bfseries}{\upshape}
\spnewtheorem{thm}[dn]{Theorem}{\bfseries}{\itshape}
\spnewtheorem{prop}[dn]{Proposition}{\bfseries}{\itshape}
\spnewtheorem{lem}[dn]{Lemma}{\bfseries}{\itshape}
\spnewtheorem{hq}[dn]{Corollary}{\bfseries}{\itshape}
\spnewtheorem{nots}[dn]{Note}{\bfseries}{\upshape}
\spnewtheorem{gt}[dn]{Conjecture}{\bfseries}{\itshape}
\spnewtheorem*{acknow}{Acknowledgments}{\bfseries}{\upshape}
\spnewtheorem*{xproof}{}{\itshape}{\rmfamily}% the label is assigned later
\renewenvironment{proof}[1][\proofname]
 {\xproof}
 {\endxproof}
\def\DD{D\kern-.7em\raise0.4ex\hbox{\char '55}\kern.33em}
\newcommand\reallywidehat[1]{\arraycolsep=0pt\relax%
\begin{array}{c}
\stretchto{
  \scaleto{
    \scalerel*[\widthof{\ensuremath{#1}}]{\kern-.5pt\bigwedge\kern-.5pt}
    {\rule[-\textheight/2]{1ex}{\textheight}} %WIDTH-LIMITED BIG WEDGE
  }{\textheight} % 
}{0.5ex}\\           % THIS SQUEEZES THE WEDGE TO 0.5ex HEIGHT
#1\\                 % THIS STACKS THE WEDGE ATOP THE ARGUMENT
\rule{-1ex}{0ex}
\end{array}
}
\begin{document}

\title{On the mod-2 cohomology of the product of the infinite lens space and the space of invariants in a generic degree}

% \author{\DD\d{\u a}ng V\~o Ph\'uc \orcid{0000-0002-6885-3996}

\author{\DD\d{\u a}ng V\~o Ph\'uc
         %etc.
}

\authorrunning{\DD\d.V.Ph\'uc}

\institute{\textbf{\DD\d{\u a}ng V\~o Ph\'uc} \at
        Department of Information Technology, FPT University, Quy Nhon A.I Campus,\\ 
An Phu Thinh New Urban Area, Quy Nhon City, Binh Dinh, Vietnam\\
              \email{\textbf{dangphuc150488@gmail.com; phucdv14@fpt.edu.vn}}  \\
%\textbf{ArXiv:2408.07485v2, \url{https://arxiv.org/pdf/2408.07485}}
\textbf{ORCID:} \url{https://orcid.org/0000-0002-6885-3996}
               %This work is supported by National Foundation for Science and Technology Development\\ (NAFOSTED) of Viet Nam [grant number 101.04-2017.05]
}
         %  \\
%             \emph{Present address:} of F. Author  %  if needed

%\date{Received: date / Accepted: date}
% The correct dates will be entered by the editor

\maketitle

\begin{abstract}

Let $\mathbb S^{\infty}/\mathbb Z_2$ be the infinite lens space.  Denote the Steenrod algebra over the prime field $\mathbb F_2$ by $\mathscr A.$ The (mod-2) cohomology $H^{*}((\mathbb S^{\infty}/\mathbb Z_2)^{\oplus s}; \mathbb F_2)$ is known to be isomorphic to the graded polynomial ring $\mathcal {P}_s:= \mathbb F_2[x_1, \ldots, x_s]$ on $s$ generators of degree 1, viewed as an unstable $\mathscr A$-module. The Kameko squaring operation $(\widetilde {Sq^0_*})_{(s; N)}: (\mathbb F_2\otimes_{\mathscr A} \mathcal {P}_s)_{2N+s}  \longrightarrow (\mathbb F_2\otimes_{\mathscr A} \mathcal {P}_s)_{N}$ is rather useful in studying an open problem of determining the dimension of the indecomposables $(\mathbb F_2\otimes_{\mathscr A} \mathcal {P}_s)_N.$ It has been demonstrated that this $(\widetilde {Sq^0_*})_{(s; N)}$ is onto. 

As a continuation of our recent works, this paper deals with the kernel of the Kameko $(\widetilde {Sq^0_*})_{(s; N_d)}$ for the case where $s = 5$ and the "generic" degree $N_d$ is of the form $N_d = 5(2^{d} - 1) + 11.2^{d+1}$ for arbitrary $d > 0.$ We then rectify almost all of the main results that were inaccurate in an earlier publication [Rev. Real Acad. Cienc. Exactas Fis. Nat. Ser. A-Mat. \textbf{116}:81 (2022)] by Nguyen Khac Tin. We have also constructed several advanced algorithms in \texttt{SAGEMATH} to validate our results. \textit{These new algorithms make an important contribution to tackling the intricate task of explicitly determining both the dimension and the basis for the indecomposables $\mathbb F_2 \otimes_{\mathscr A} \mathcal {P}_s$ at positive degrees, a problem concerning algorithmic approaches that had not previously been addressed by any author.} Also, the present study encompasses an investigation of the behavior of the cohomological transfer in bidegrees $(5, 5+N_d)$, with the internal degree $N_d$ mentioned above. 
%Appended to this paper is a discussion on the interplay between the papers by Nguyen Sum \cite{SM, N.S3} and our preceding studies \cite{D.P3, D.P7-1}.

\keywords{Steenrod algebra\and Infinite lens space \and Invariant ring \and Cohomological transfer}
 \subclass{13A50\and 55S10\and 55R12 \and 55S05}
\end{abstract}

\begin{acknow}
%The current work tackles and corrects all the wrong results in the paper by \textbf{Nguyen Khac Tin}. The author thanks \textbf{Nguyen Khac Tin} and \textbf{Nguyen Sum}, my colleagues from the algebraic topology group in Vietnam, for their insightful conversations and related professional exchanges.

The manuscript was submitted to an ISI-indexed journal for peer review and consideration for publication.
\end{acknow} 

\section{Introduction}\label{s1}

It is well-known that the Steenrod algebra $\mathscr A$ acts on mod-2 cohomology, making it indispensable computational tools in homotopy theory at the prime 2. As an algebra over the prime field $\mathbb F_2,$ $\mathscr A$ is generated by the Steenrod squares $Sq^{i}$ for $i\geq 0$ which act by post-composition. Let $\mathbb S^{\infty}/\mathbb Z_2$ be an infinite lens space. The action of $\mathscr A$ on the cohomology algebra $H^{*}((\mathbb S^{\infty}/\mathbb Z_2)^{\oplus s}; \mathbb F_2)$ has been a topic of much %(see, for instance, the work by Brunetti, Ciampella and Lomonaco \cite{Brunetti}, Brunetti and Lomonaco \cite{Brunetti2}, Walker and Wood \cite{W.W}, Wood \cite{R.W}). 
(see, for instance, Walker and Wood \cite{W.W}, Wood \cite{R.W}). This cohomology $H^{*}((\mathbb S^{\infty}/\mathbb Z_2)^{\oplus s}; \mathbb F_2)$ could be identified with an unstable $\mathscr A$-module $\mathcal {P}_s:=\mathbb F_2[x_1, \ldots, x_s]$ (the graded polynomial ring on $s$ generators, each of degree one). To understand this event better, readers can find more details in our recent work \cite{D.P13}. A renowned challenge in algebraic topology involves determining a minimal generating set for the invariant ring $\mathcal {P}_s^{G_s},$ where $G_s$ is a subgroup of $GL(s, \mathbb F_2).$ This is tantamount to computing the dimension of the $\mathbb Z$-graded vector space $\{(\mathbb F_2\otimes_{\mathscr A} \mathcal {P}_s^{G_s})_N\}_{N\geq 0}.$ The structure of this space has been thoroughly studied by many authors (see Janfada-Wood \cite{J.W}, Pengelley-Williams \cite{P.W}, Peterson \cite{F.P2}, Singer \cite{W.S1, W.S2}, and others). In addition,  its applications to homotopy theory are surveyed by Peterson \cite{F.P2} and Singer \cite{W.S1}. For $G_s = \Sigma_s$ (the symmetric group of rank $s$), the problem was investigated by Janfada-Wood  \cite{J.W} for $s = 3,$ and by Singer \cite{W.S2} for the dual of $\{(\mathbb F_2\otimes_{\mathscr A} \mathcal {P}_s^{G_s})_N\}_{N\geq 0}$ for $s\geq 0.$ The problem when $G_s = GL(s, \mathbb F_2)$ was studied by Singer \cite{W.S1} for $s = 2.$ and by H\uhorn ng-Peterson \cite{H.P} for $s = 3,\, 4.$ In our current investigation, we are primarily concerned with the case $G_s = \{0\}.$  Accordingly, the space $\{(\mathbb F_2\otimes_{\mathscr A} \mathcal {P}_s^{G_s})_N\}_{N\geq 0}$ simplifies to $\{(\mathbb F_2\otimes_{\mathscr A} \mathcal {P}_s)_N = (\mathcal {P}_s/\mathscr A^{+}\mathcal {P}_s)_N\}_{N\geq 0},$ with $\mathscr A^{+}$ representing the augmentation ideal in $\mathscr A.$ In what follows, we will use the notation $\mathscr A\mathcal Q^{\otimes}_s$ to refer to $\mathbb F_2\otimes_{\mathscr A} \mathcal {P}_s = \mathcal {P}_s/\mathscr A^{+}\mathcal {P}_s.$  Explicitly determining the dimension and an "admissible monomial" basis for $\mathscr A\mathcal Q^{\otimes}_s$ in positive degrees is known as the Peterson "hit" problem \cite{F.P1}. The geometric significance of this hit problem's solution lies in its ability to describe how cells in a $CW$-complex at the prime 2 are attached to cells of lower dimensions. Full investigations into the dimension of the space $\mathscr A\mathcal Q^{\otimes}_s$ have been carried out for values of $s$ up to 4 (see \cite{F.P1, M.K, N.S1, W.W, R.W}). Some results for higher $s$ and various "generic" degrees have been contributed by Phúc \cite{P.S1, D.P2, D.P4, D.P7, D.P7-1, D.P8, D.P11} and others. A general solution to the problem, however, has yet to be found.

A key instrument in our investigation of $\mathscr A\mathcal Q^{\otimes}_s,$ is the Kameko squaring operation \cite{M.K}. This operation, denoted as $(\widetilde {Sq^0_*})_{(s; N)},$ is an epimorphism of $\mathbb F_2GL(s, \mathbb F_2)$-modules from $(\mathscr A\mathcal Q^{\otimes}_s)_{2N+s}$ onto $(\mathscr A\mathcal Q^{\otimes}_s)_{N}.$ Under the Kameko $(\widetilde {Sq^0_*})_{(s; N)},$ any class $[u]$ maps to the class $[\zeta]$ if $u$ is representable as the product of $s$ generators of $\mathcal P_s$ with $\zeta$ squared, and to zero in all other instances.
%Among the tools available for studying $\mathscr A\mathcal Q^{\otimes}_s,$ the Kameko squaring operation \cite{M.K} is relatively efficient. The operation is defined as follows:  
%$$ (\widetilde {Sq^0_*})_{(s; N)}: (\mathscr A\mathcal Q^{\otimes}_s)_{2N+s}  \longrightarrow (\mathscr A\mathcal Q^{\otimes}_s)_{N},$$ determined by $(\widetilde {Sq^0_*})_{(s; N)}([x]) = [y],$ if $x = \prod_{1\leq j\leq s}x_jy^{2},$ and $(\widetilde {Sq^0_*})_{(s; N)}([x]) = 0$ otherwise.  
Kameko proved in \cite{M.K} that if $\mu(2N+s) = s,$ then $(\widetilde {Sq^0_*})_{(s; N)}$ is an isomorphism. Here $\mu(N)$ is the smallest number of integers of the form $2^{j}-1$ (with repetitions allowed) whose sum is $N.$ Moreover, according to Wood \cite{R.W}, $(\mathscr A\mathcal Q^{\otimes}_s)_{N}$ is trivial,  if $\mu(N) > s.$ In light of these results, we restrict the study of $\mathscr A\mathcal Q^{\otimes}_s$ to those degrees $N$ where $\mu(N)$ is upper-bounded by $s.$

Continuing the line of inquiry established in our earlier paper \cite{D.P4}, the present work focuses on the $s = 5$ case of the Kameko $(\widetilde {Sq^0_*})_{(s; N)},$ investigating its kernel structure for degrees of the form $N_d := 5(2^d - 1) + 11 \cdot 2^{d+1},$ where $d$ is a positive integer. We can easily observe that $\mu(N_1)  = 3 < 5$ and $\mu(N_d)  = 5$ for any $d > 1.$ This subject has also been investigated by Nguyen Khac Tin \cite{Tin2}. This paper \cite{Tin2} essentially builds upon our previous works in \cite{D.P4, D.P7}. Unfortunately, almost all principal outcomes in \cite{Tin2} are wholly inaccurate, including \textbf{Theorem 3.3}, \textbf{Corollary 3.4}, \textbf{Theorem 3.6}, and \textbf{Theorem 3.7}. It is worth noting that \textbf{Theorem 3.3(ii)}, the most significant yet incorrect result in Tin's paper \cite{Tin2}, is presented without a detailed proof. %\textit{So, given that almost all primary results in Nguyen Khac Tin's paper \cite{Tin2} were erroneous, his work was rendered scientifically meaningless.} 

Therefore, the focus of our paper is to tackle and rectify all the aforementioned inaccurate results found in Nguyen Khac Tin's paper \cite{Tin2}. Several advanced algorithms in \texttt{SAGEMATH} are also constructed to confirm our results. With the limited tools we have at the present time, these algorithms become very important and essential. Indeed, the use of these algorithms makes it easier to simplify the process of explicitly computing the dimension and basis of $\mathscr A\mathcal Q^{\otimes}_s$ at certain positive degrees. 

\begin{cy}
In the current state of knowledge, we have not found any published work that presents an algorithm implemented in computer algebra systems which explicitly solves for both the dimension and basis of $(\mathscr A\mathcal Q^{\otimes}_s)_N$, even for small $N.$ Therefore, our current work has made a significant contribution to resolving this issue, as it is well-known that explicitly calculating a basis for $(\mathscr A\mathcal Q^{\otimes}_s)_N$ is an extremely challenging task, if not impossible. With the attainment of this preliminary algorithmic result, we hope that it will pave the way for further research directions aimed at developing algorithms capable of fully describing the basis of the indecomposables $\mathscr A\mathcal Q^{\otimes}_s.$
\end{cy}

We, additionally, utilize our previous works to study the behavior of the fifth cohomological transfer, defined by W. Singer \cite{W.S1}, particularly in relation to degrees $N_d.$  

The computational methods employed in this paper build upon techniques developed in our previous work \cite{D.P4}, with the aid of the algebraic computer system \texttt{SAGEMATH}.

\medskip

\textbf{Outline of subsequent sections.} We begin with essential background information in Section \ref{s2}. Subsequently, Section \ref{s3} studies the kernel of the Kameko homomorphism in the generic degree $N_d = 5(2^d - 1) + 11 \cdot 2^{d+1}$, and corrects almost all the wrong chief results in Nguyen Khac Tin's paper \cite{Tin2}. Section \ref{s4} is devoted to investigating the isomorphism of the Singer cohomological transfer in bi-degrees $(5, 5+N_d)$ for $d\geq 0.$ 
%Finally, the appendix provides an examination of the relationship between Nguyen Sum's papers \cite{SM, N.S3} and our earlier research \cite{D.P3, D.P7-1}. 

 %Detailed proofs of these results will be published elsewhere. 
%The following theorem is one of our main results.
 
%\begin{thm}\label{dlc1}
%With the generic degrees of the form \eqref{ct}, when $r = s = 5$ and $m = 22,$ we have $N_d := N= 5(2^{d}-1) + 22.2^{d}$ and
%$$ \dim (\mathscr A\mathcal Q^{\otimes}_5)_{N_{d}} = \left\{ \begin{array}{ll}
%965&\mbox{if $d = 0$},\\
%3012&\mbox{if $d \geq 1$}.
%\end{array}\right.$$
%\end{thm}

%We should note that the result when $d=0$ was computed in \cite{D.P4}. 

\section{Several notions and related known outcomes}\label{s2}

For the purposes of this study, we assume that the reader has a fundamental understanding of the subjects discussed. To effectively present the main results of our paper, we will provide a brief review of some essential known results. For further details, readers are encouraged to refer to our previous work \cite{D.P4}.

\begin{dn}
Let the dyadic expansion of a positive integer $N$ be given by $N = \sum_{j\geq 0}\alpha_j(n)2^j,$ where $\alpha_j(N)= 0,\, 1.$ For each monomial $x = x_1^{a_1}x_2^{a_2}\ldots x_s^{a_s}$ in $\mathcal {P}_s,$ we define its \textit{weight vector} $\omega(x)$ as the sequence: $\omega(x) =(\omega_1(x), \omega_2(x), \ldots, \omega_j(x), \ldots),$ where $\omega_j(x) = \sum_{1\leq i\leq s}\alpha_{j-1}(a_i)$ for all $j \geq 1.$
\end{dn}

Consider a weight vector $\omega = (\omega_1, \omega_2, \ldots, \omega_k,0,0,\ldots)$ with its degree defined as $\deg \omega =\sum_{k\geq 1}2^{k-1}\omega_k.$ We denote by:

$\bullet$ $\mathcal {P}_s(\omega)$ the subspace of $\mathcal {P}_s$ spanned by monomials $u$ satisfying both $\deg u = \deg \omega$ and $\omega(u)\leq \omega;$

$\bullet$ $\mathcal {P}_s^-(\omega)$ the subspace of $\mathcal {P}_s(\omega)$ spanned by monomials $u$ with $\omega(u)< \omega.$

\begin{dn}
For $f$ and $g,$ homogeneous polynomials of equal degree in $\mathcal {P}_s,$ we define:

(i) $f \sim g\iff (f + g) \in \mathscr{A}^+\mathcal {P}_s;$ 

(ii) $f \sim_{\omega} g\iff (f + g)\in \mathscr{A}^+\mathcal {P}_s + \mathcal {P}_s^-(\omega).$
\end{dn}

Both "$\sim$" and "$\sim_{\omega}$" possess the characteristics of equivalence relations, as is readily observable. Let $(\mathcal Q\mathcal {P}_s)(\omega)$ denote the quotient of $\mathcal {P}_s(\omega)$ by the equivalence relation "$\sim_\omega$". References \cite{W.W} show that $(\mathcal Q\mathcal {P}_s)(\omega)$ possesses a $GL(s, \mathbb F_2)$-module structure, and for every degree $n,$ there is an isomorphism given by $(\mathscr A\mathcal Q^{\otimes}_s)_N \cong \bigoplus\limits_{{\rm deg}\,\omega = N}(\mathcal Q\mathcal {P}_s)(\omega).$

\begin{dn}
One defines an ordering on monomials in $\mathcal {P}_s$ as follows: For $u = x_1^{a_1}\ldots x_s^{a_s}$ and $v=x_1^{b_1}\ldots x_s^{b_s},$ $u < v$ if and only if either $\omega(u) < \omega(v),$ or $\omega(u) = \omega(v)$ and $(a_1, a_2, \ldots, a_s)$ is lexicographically less than $(b_1, b_2, \ldots, b_s).$
\end{dn}

 \begin{dn}
In $\mathcal {P}_s,$ we call a monomial $x$ inadmissible if there exists a collection of monomials $y_1, y_2,\ldots, y_m,$ each less than $x,$ such that their sum with $x$ lies in $\mathscr{A}^+\mathcal {P}_s.$ In other words, $x\sim \sum_{1\leq j\leq m}{y_j}.$  Conversely, a monomial $x$ is defined to be admissible if it is not inadmissible.
\end{dn}

 \begin{dn}
In $\mathcal {P}_s,$ a monomial $x$ is defined as strictly inadmissible if and only if there exist monomials $y_1, y_2,\ldots, y_m$ of the same degree as $x,$ each satisfying $y_j < x$ for $1\leq j\leq m,$ such that $x$ can be expressed as $x = \sum_{1\leq j\leq m}y_t + \sum_{1\leq \ell < 2^r }Sq^{\ell}(h_j),$ where $r = \max\{i\in\mathbb Z: \omega_i(x) > 0\}$ and $h_j$ are appropriate polynomials in $\mathcal {P}_s.$
\end{dn}

\begin{thm}[see \cite{M.K}, \cite{N.S0}]\label{dlKS}

Let $u,\, v,\, w$ be monomials in $\mathcal {P}_s$ such that $\omega_i(u) = 0$ for $i > r > 0,\, \omega_k(w)\neq 0$ and $\omega_i(w) = 0$ for $i > k > 0.$ 
\begin{itemize}
\item[(i)] If $w$ is inadmissible, then so is $uw^{2^r}.$ 

\item[(ii)] If $w$ is strictly inadmissible, then so is $wv^{2^{k}}.$
\end{itemize}
\end{thm}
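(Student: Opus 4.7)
The plan is to prove both parts by combining the Cartan formula with two fundamental $\mathbb F_{2}$-level facts: the Frobenius identity $(A+B)^{2^{r}}=A^{2^{r}}+B^{2^{r}}$, and the vanishing
\[
Sq^{j}(z^{2^{r}})=\begin{cases}(Sq^{j/2^{r}}z)^{2^{r}}&\text{if }2^{r}\mid j,\\ 0&\text{otherwise.}\end{cases}
\]

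Part (ii) is essentially a single application of Cartan. Strict inadmissibility of $w$ at top-weight position $k$ furnishes a representation $w=\sum_{j}y_{j}+\sum_{1\le\ell<2^{k}}Sq^{\ell}(h_{j})$ with $y_{j}<w$ of the same degree as $w$. For each $1\le\ell<2^{k}$, I would expand $Sq^{\ell}(h_{j}v^{2^{k}})=\sum_{b}Sq^{b}(h_{j})\,Sq^{\ell-b}(v^{2^{k}})$: the factor $Sq^{\ell-b}(v^{2^{k}})$ is nonzero only when $2^{k}\mid(\ell-b)$, while $0\le\ell-b\le\ell<2^{k}$ forces $b=\ell$, whence $Sq^{\ell}(h_{j}v^{2^{k}})=Sq^{\ell}(h_{j})v^{2^{k}}$. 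Multiplying the displayed relation for $w$ by $v^{2^{k}}$ then gives $wv^{2^{k}}=\sum_{j}y_{j}v^{2^{k}}+\sum_{1\le\ell<2^{k}}Sq^{\ell}(h_{j}v^{2^{k}})$, which has precisely the shape required for strict inadmissibility: since $r'=\max\{i:\omega_{i}(wv^{2^{k}})>0\}\ge k$, the condition $\ell<2^{k}\le 2^{r'}$ is met. I finish with a weight-vector check that $y_{j}v^{2^{k}}<wv^{2^{k}}$: by hypothesis the exponents of $w$ (and of each $y_{j}$) lie below $2^{k}$, whereas $v^{2^{k}}$ contributes only to positions $>k$ of the weight vector, so the two vectors decouple cleanly and $y_{j}<w$ transports directly to $y_{j}v^{2^{k}}<wv^{2^{k}}$ in Definition 2.4.

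For part (i), I start from $w+\sum_{j}y_{j}=\sum_{\ell}Sq^{a_{\ell}}(h_{\ell})$ witnessing the inadmissibility of $w$, apply Frobenius to raise to the $2^{r}$-th power, and multiply by $u$:
\[
uw^{2^{r}}+\sum_{j}uy_{j}^{2^{r}}=\sum_{\ell}u\cdot Sq^{2^{r}a_{\ell}}(h_{\ell}^{2^{r}}).
\]
The key identity, obtained by expanding $Sq^{2^{r}a_{\ell}}(u\,h_{\ell}^{2^{r}})$ via Cartan plus the vanishing, is
\[
u\cdot Sq^{2^{r}a_{\ell}}(h_{\ell}^{2^{r}})=Sq^{2^{r}a_{\ell}}(u\,h_{\ell}^{2^{r}})+\sum_{b'\ge1}Sq^{2^{r}b'}(u)\cdot(Sq^{a_{\ell}-b'}h_{\ell})^{2^{r}}.
\]
The first summand lies in $\mathscr A^{+}\mathcal P_{s}$. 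Each error term $Sq^{2^{r}b'}(u)(Sq^{a_{\ell}-b'}h_{\ell})^{2^{r}}$ is treated by iterating the same Cartan identity, now on $Sq^{2^{r}b'}(u\cdot(Sq^{a_{\ell}-b'}h_{\ell})^{2^{r}})$, which produces a further contribution in $\mathscr A^{+}\mathcal P_{s}$ together with new errors whose total Steenrod budget is strictly smaller. After finitely many iterations, all remainders collapse into $\mathscr A^{+}\mathcal P_{s}$ plus a combination of monomials that, by Lucas's theorem together with $\omega_{i}(u)=0$ for $i>r$, are strictly smaller than $uw^{2^{r}}$ in Definition 2.4. Combined with the weight-vector comparison $uy_{j}^{2^{r}}<uw^{2^{r}}$, which is the mirror of the argument in (ii) since the exponents of $u$ lie below $2^{r}$ whereas those of $w^{2^{r}}$ and $y_{j}^{2^{r}}$ are multiples of $2^{r}$, this gives $uw^{2^{r}}\equiv\sum_{j}uy_{j}^{2^{r}}+\text{(monomials $<uw^{2^{r}}$)}\pmod{\mathscr A^{+}\mathcal P_{s}}$, establishing the inadmissibility of $uw^{2^{r}}$.

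The principal obstacle I expect is the bookkeeping in part (i): the monomials appearing in $Sq^{2^{r}b'}(u)$ generically have exponents exceeding $2^{r}$, so the hypothesis $\omega_{i}(u)=0$ for $i>r$ cannot be re-invoked at each iteration, and the induction must instead be organised around the total Steenrod budget plus a careful control of the weight vectors produced by the successive Cartan expansions. This delicate induction is precisely what is carried out in Kameko's thesis \cite{M.K} and in Sum's paper \cite{N.S0}; the proof I would write follows that scheme, modulo the cleaner Cartan-plus-Frobenius identities isolated above.
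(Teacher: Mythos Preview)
The paper itself supplies no proof of Theorem~\ref{dlKS}; it is stated as a known result with citations to Kameko~\cite{M.K} and Sum~\cite{N.S0}. Your proposal therefore goes beyond what the paper does, and your outline of the Cartan--Frobenius mechanism is the standard route taken in those references.

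For part~(ii) your argument is essentially correct, but one step is mis-justified. You write that ``by hypothesis the exponents of $w$ (and of each $y_j$) lie below $2^{k}$.'' This is guaranteed for $w$ (since $\omega_i(w)=0$ for $i>k$) but \emph{not} for the $y_j$: the definition of strict inadmissibility only requires $y_j<w$ and $\deg y_j=\deg w$, and if $\omega(y_j)<\omega(w)$ strictly then $y_j$ may well have exponents $\ge 2^{k}$. Fortunately the conclusion $y_jv^{2^{k}}<wv^{2^{k}}$ still holds: adding a multiple of $2^{k}$ to each exponent never alters bits $0,\dots,k-1$, so $\omega_i(y_jv^{2^{k}})=\omega_i(y_j)$ and $\omega_i(wv^{2^{k}})=\omega_i(w)$ for all $i\le k$; since the first index where $\omega(y_j)$ and $\omega(w)$ differ is some $i_0\le k$, the weight-vector comparison transports. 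The equal-weight case is exactly as you wrote. With this correction, your proof of~(ii) is complete.

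For part~(i) you correctly isolate the obstruction: after one Cartan expansion the ``$u$-part'' becomes $Sq^{2^{r}b'}(u)$, whose monomials can have exponents up to $2^{r+1}-2$, so the hypothesis $\omega_i(\cdot)=0$ for $i>r$ is lost and one cannot simply iterate the same identity. You then defer to the cited references for the delicate bookkeeping. That is honest, but it means your write-up for~(i) is a plan rather than a proof. If you want a self-contained argument, one clean way to organise the induction is to prove directly the lemma ``$u\,g^{2^{r}}\in\mathscr A^{+}\mathcal P_s$ whenever $g\in\mathscr A^{+}\mathcal P_s$ and all exponents of $u$ are $<2^{r}$,'' inducting on $r$ (the case $r=1$ is Kameko's original lemma for the map $y\mapsto x_1\cdots x_s\,y^{2}$) and, within each $r$, on the Steenrod exponent~$a$ in $g=Sq^{a}(h)$; at each step the error terms $Sq^{2^{r}c'}(u)\,(Sq^{a-c'}h)^{2^{r}}$ factor as a sum of products (monomial with exponents $<2^{r}$)$\cdot$(square)${}^{2^{r-1}}\cdot$(polynomial)${}^{2^{r}}$, which feeds back into the inductive hypothesis.
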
 

Consequently, the set of all the admissible monomials of degree $N$ in $\mathcal {P}_s$ is a minimal set of $\mathscr A$-generators for $\mathcal {P}_s$ in degree $N.$ So, the space $(\mathscr A\mathcal Q^{\otimes}_s)_N$ has a basis consisting of all the classes represent by the admissible monomials of degree $N$ in $\mathcal {P}_s.$

Let us characterize two subspaces of $\mathcal {P}_s$:
$$\begin{array}{ll}
\medskip
\mathcal {P}_s^0 &={\rm span} \big\{x= x_1^{a_1}\ldots x_s^{a_s}\in \mathcal {P}_s\;|\;a_1\ldots a_s = 0\big\}, \\
\mathcal {P}_s^{+}&= {\rm span} \big\{x= x_1^{a_1}\ldots x_s^{a_s}\in \mathcal {P}_s\;|\; a_1\ldots a_s > 0\big\}.
\end{array}$$
These subspaces are $\mathscr{A}$-submodules of $\mathcal {P}_s.$ This leads to a direct sum decomposition of $\mathscr A\mathcal Q^{\otimes}_s$ as $\mathbb F_2$-vector spaces:
$$\mathscr A\mathcal Q^{\otimes}_s = \mathscr A\mathcal Q^{\otimes 0}_s \bigoplus \mathscr A\mathcal Q^{\otimes >}_s,$$
where $\mathscr A\mathcal Q^{\otimes 0}_s :=  \mathbb F_2\otimes_{\mathscr A}\mathcal {P}_s^{0}$ and $\mathscr A\mathcal Q^{\otimes >}_s :=  \mathbb F_2\otimes_{\mathscr A}\mathcal {P}_s^{+}.$

A spike in $\mathcal {P}_s$ is characterized as a monomial $z = x_1^{b_1}\ldots x_s^{b_s}$ where each exponent $b_i$ is of the form $2^{d_i} - 1,$ with $d_i$ being a non-negative integer. We call this spike minimal if the sequence of $d_i$ is strictly decreasing for the first $r-1$ terms, $d_r$ is positive, and all subsequent $d_j$ (for $j>r$) vanish.

\begin{lem}[see \cite{P.S1}]\label{dlPS}
All the spikes in $\mathcal {P}_s$ are admissible and their weight vectors are weakly decreasing. Furthermore, if a weight vector $\omega = (\omega_1, \omega_2, \ldots)$ is weakly decreasing and $\omega_1\leq s,$ then there is a spike $z$ in $\mathcal {P}_s$ such that $\omega(z) = \omega.$
\end{lem}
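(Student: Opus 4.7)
The plan is to prove three separate statements: (a) the weight vector of any spike is weakly decreasing; (b) every spike is admissible; and (c) any weakly decreasing weight vector $\omega = (\omega_1,\omega_2,\ldots)$ with $\omega_1 \le s$ is realised as $\omega(z)$ for some spike $z \in \mathcal{P}_s$.

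For (a) I would just unwind the definitions. If $z = \prod_{i=1}^s x_i^{2^{d_i}-1}$, then the binary digit $\alpha_{j-1}(2^{d_i}-1)$ equals $1$ precisely when $j \le d_i$ and $0$ otherwise, so
\[
\omega_j(z) \;=\; \#\{\, i \in \{1,\ldots,s\} \;:\; d_i \ge j\,\},
\]
which is weakly decreasing in $j$. For (c), given $\omega$ weakly decreasing with $\omega_1 \le s$, I would form the conjugate partition: set $d_i := \#\{\, j \ge 1 : \omega_j \ge i \,\}$ for $i = 1,\ldots,s$, so that $d_1 \ge d_2 \ge \cdots \ge d_s \ge 0$. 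The standard duality between a partition and its conjugate yields $\#\{i : d_i \ge j\} = \omega_j$ for every $j$, so $z := \prod_i x_i^{2^{d_i}-1}$ is a spike with $\omega(z) = \omega$.

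The substantive part is (b). My plan is to argue by contradiction using $\mathbb{F}_2$-linear coefficient comparison. Suppose a spike $z = \prod_i x_i^{2^{d_i}-1}$ is inadmissible; then by Definitions 2.3 and 2.4 there exist monomials $y_1,\ldots,y_m$ of degree $\deg z$ with $y_\ell < z$ and an identity
\[
z + \sum_{\ell=1}^m y_\ell \;=\; \sum_{a} Sq^{k_a}(f_a), \qquad k_a \ge 1.
\]
Looking at the coefficient of the monomial $z$ on both sides over $\mathbb{F}_2$, the left-hand side contributes $1$ since every $y_\ell$ is distinct from $z$. It therefore suffices to show that for every $k \ge 1$ and every monomial $g = \prod_i x_i^{a_i}$, the coefficient of $z$ in the Cartan expansion of $Sq^k(g)$ vanishes; this gives the required contradiction.

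The computation reduces to the following combinatorial claim, which I expect to be the only real obstacle. By the Cartan formula together with Lucas' theorem, the coefficient of $z$ in $Sq^k(g)$ counts tuples $(b_1,\ldots,b_s)$ satisfying $\sum_i b_i = k$, $a_i + b_i = 2^{d_i}-1$ for each $i$, and the property that every binary digit of $b_i$ is dominated by the corresponding digit of $a_i$. I would argue these constraints force $b_i = 0$ for all $i$: assume some $b_i > 0$ and let $k^\ast$ be its least set bit. If $k^\ast \ge d_i$, then $b_i \ge 2^{d_i}$ and hence $a_i+b_i \ge 2^{d_i} > 2^{d_i}-1$, a contradiction. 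If $k^\ast < d_i$, then by digit-domination the bit at position $k^\ast$ of $a_i$ is also $1$; combined with the vanishing of the lower bits of $b_i$ (so no carry enters position $k^\ast$), summing $a_i+b_i$ produces a carry at position $k^\ast$ that turns the bit there from the required $1$ into $0$, again a contradiction. Hence $k = \sum_i b_i = 0$, contrary to $k \ge 1$, and the admissibility of $z$ follows.
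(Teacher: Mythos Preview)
Your proof is correct. The paper does not actually prove this lemma; it merely states it with a citation to \cite{P.S1} and uses it as a black box. So there is no ``paper's own proof'' to compare against---you have supplied a complete argument where the paper provides none.

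Your three parts are all sound. Parts (a) and (c) are the standard conjugate-partition duality, and your part (b) is the well-known fact that a spike cannot appear as a term in any $Sq^k(g)$ for $k\ge 1$: the carry analysis at the least set bit of $b_i$ is exactly right, and together with Lucas' theorem it forces every $b_i=0$. This is essentially the classical argument (it also underlies Singer's criterion, Theorem~\ref{dlSi}), so your approach is the expected one.
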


The following statement provides a means to recognize $\mathscr A$-decomposable monomials.

\begin{thm}[see {\cite{W.S3}}]\label{dlSi}
Assume $u$ is a monomial in $\mathcal {P}_s$ with degree $n,$ where $\mu(n)\leq s.$ If $z$ denotes the minimal spike of degree $n$ in $\mathcal {P}_s,$ then $u$ is $\mathscr A$-decomposable (that is, $u\sim 0$) when $\omega(u) < \omega(z).$
\end{thm}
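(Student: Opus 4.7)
The plan is to establish Theorem \ref{dlSi} by double induction: an outer induction on the degree $n$ and an inner induction on the weight vector in the left-lexicographic order. The key structural observation is that the minimal spike $z = x_1^{2^{d_1}-1}\cdots x_r^{2^{d_r}-1}$ (with $d_1 > d_2 > \cdots > d_r > 0$) has weight components $\omega_j(z) = \#\{i : d_i \geq j\}$, and these encode the "greedy" expression of $n$ as a sum of $s$ terms of the form $2^k - 1$. In particular, because $\mu(n) \leq s$ by hypothesis, such a $z$ exists, and by Lemma \ref{dlPS} it is admissible with a weakly decreasing weight vector.

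First I would set up the ordering on weight vectors: $\omega(u) < \omega(z)$ means there is a smallest index $j_0$ with $\omega_j(u) = \omega_j(z)$ for $j < j_0$ and $\omega_{j_0}(u) < \omega_{j_0}(z)$. Since $\deg u = \deg z = \sum 2^{j-1}\omega_j(z)$, the deficit at position $j_0$ must be compensated by a surplus at some later position; equivalently, $u$ contains at least one variable $x_i$ whose exponent $a_i$ has a $0$ in binary position $j_0 - 1$ but a $1$ in some strictly higher position.

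Next I would exploit Theorem \ref{dlKS}: if $u$ can be factored as $u_1 \cdot u_2^{2^r}$ in the form appearing there, then the outer induction on degree applied to $u_2$ combined with part (i) of that theorem immediately yields $u \sim 0$, provided $\omega(u_2) < \omega(z')$ for the minimal spike $z'$ of the appropriate degree. The bookkeeping here reduces the claim to the essential "diagonal" case where this factorization is not available, in which case one exploits part (ii) instead by exhibiting a strictly inadmissible subword.

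The hard part will be the core step: showing that whenever $\omega_{j_0}(u) < \omega_{j_0}(z)$ at the first differing position, one can apply the Cartan formula to a suitable $Sq^{2^{j_0 - 1}}$ acting on a polynomial of lower degree to obtain $u$ modulo a sum of monomials each of which is either (a) strictly smaller than $u$ in the monomial order of Definition 2.4 and still satisfies the hypothesis of the theorem, or (b) has weight vector strictly dominating $\omega(u)$ in the first $j_0$ entries. Case (a) is killed by the inner induction, while case (b), iterated, forces us into the admissible range, completing the argument. The delicate point is to select the Steenrod square and the "witness" polynomial carefully enough so that no term of weight $\geq \omega(z)$ is introduced; this is typically achieved by using the $\chi$-trick, expressing $Sq^k = \chi(Sq^k) + (\text{lower terms})$, combined with the identity that $Sq^{2^{j-1}}$ shifts a $0$-bit to a $1$-bit at position $j-1$ of an exponent.
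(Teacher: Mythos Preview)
The paper does not give its own proof of Theorem \ref{dlSi}; it is quoted as a known result of Singer \cite{W.S3} and used as a black box throughout Section \ref{s3}. So there is nothing in the paper to compare your argument against, and your proposal should be read as an attempt to reconstruct Singer's proof rather than to match the paper.

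As a reconstruction, your outline has the right overall shape (induction on degree and weight, Cartan-formula manipulations to lower the weight vector), but it contains a genuine gap at the reduction step. You invoke Theorem \ref{dlKS}(i) by factoring $u = u_1 \cdot u_2^{2^r}$ and applying the outer induction to $u_2$; for this you need $\omega(u_2) < \omega(z')$ where $z'$ is the minimal spike in degree $\deg u_2$. You assert this ``reduces the claim'' but do not verify it, and in fact it is not automatic: the shifted weight vector $\omega(u_2) = (\omega_{r+1}(u), \omega_{r+2}(u), \ldots)$ need not be smaller than the weight of the minimal spike of the smaller degree, because the minimal spike of $\deg u_2$ is \emph{not} in general obtained by truncating $z$. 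One has to relate $\mu(\deg u_2)$ to $\mu(n)$ and to the position $j_0$ where $\omega(u)$ first falls below $\omega(z)$, and this bookkeeping is exactly where Singer's argument does real work.

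Your ``core step'' is also only described, not executed: you say one applies $Sq^{2^{j_0-1}}$ to a suitable witness and controls the error terms, mentioning the $\chi$-trick, but you do not identify the witness polynomial or show that no term of weight $\geq \omega(z)$ appears. In Singer's paper this is handled via a specific identity for the action of $\chi(Sq^k)$ on monomials, and without that identity (or an equivalent) the inner induction cannot close. As written, the proposal is a plausible plan rather than a proof.
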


From here on, $[u]$ and $[u]_{\omega}$ will denote the respective classes in $\mathscr A\mathcal Q^{\otimes}_s$ and $\mathcal Q\mathcal P_s(\omega)$, represented by the homogeneous polynomial $u$ in $\mathcal {P}_s.$ We define $\mathscr{A}$-homomorphisms $\rho_j: \mathcal {P}_s\longrightarrow \mathcal {P}_s$ for each $j$ from 1 to $s$: For $j=1,2, \ldots, s-1$: $\rho_j$ swaps $t_j$ and $t_{j+1},$ leaving other $t_i$ unchanged, $i\neq j,\, j+1.$ For $j = s$: $\rho_j$ replaces $t_1$ with $t_1 + t_2,$ keeping all other $t_i$ unchanged. Consequently, the matrices corresponding to $\rho_j,\; 1\leq j\leq s,$ generate the general linear group $GL(s, \mathbb F_2),$ while those associated with $\rho_j,\; 1\leq j \leq s-1,$ generate the symmetric group $\Sigma_s\subset GL(s, \mathbb F_2).$ As a result, a class $[u]_{\omega}\in (\mathcal Q\mathcal P_s)(\omega)$ is invariant under $GL(s, \mathbb F_2)$ if and only if $\rho_j(u) + u\sim_{\omega} 0$ for all $j$ from 1 to $s.$ The same class is $\Sigma_s$-invariant precisely when this condition is satisfied for $j$ from 1 to $s-1.$ Note that if $\omega$ is a weight vector of the minimal spike, then $[u]_{\omega} = [u].$

\section{Kernel of the Kameko $\pmb{(\widetilde {Sq^0_*})_{(5; N_d)}}$ in the generic degree $\pmb{N_d =5(2^{d}-1) + 11.2^{d+1},\ d\geq 1}$}\label{s3}

This section, as discussed in the introduction, aims to investigate the structure of the kernel of the Kameko $(\widetilde {Sq^0_*})_{(5; N_d)},$ in which $N_d = 5(2^d - 1) + 11 \cdot 2^{d+1},$ with $d$ being a positive integer. Based on this, we will correct the incorrect principal outcomes in Nguyen Khac Tin's paper \cite{Tin2}.

At the outset, we present a fundamental observation regarding the domain of the Kameko $(\widetilde {Sq^0_*})_{(5; N_d)}.$ 
 This establishes a clear connection to the kernel of $(\widetilde {Sq^0_*})_{(5; N_d)},$ and motivates our detailed investigation into the structure of this kernel.

\begin{rem}\label{nx}
Noticing that $N_d = 2^{d+4} + 2^{d+3} + 2^{d+1} + 2^{d} - 5;$ hence, $\mu(N_1) = 3 < 5,$ and $\mu(N_d) = 5,$ for arbitrary $d > 1.$ Thus, it suffices to determine the space $(\mathscr A\mathcal Q^{\otimes}_5)_{N_d} $ for the cases $d=0$ and $d = 1$ because the iterated homomorphism $((\widetilde {Sq^0_*})_{(5; N_d)})^{d-1}: (\mathscr A\mathcal Q^{\otimes}_5)_{N_d}  \longrightarrow(\mathscr A\mathcal Q^{\otimes}_5)_{N_1}$ is an isomorphism, for every positive integer $d.$ When $d = 0,$ we have $\dim (\mathscr A\mathcal Q^{\otimes}_5)_{N_0}  = 965$ (see our previous work \cite{D.P4}). Consequently, the epimorphic nature of the Kameko squaring operation $(\widetilde {Sq^0_*})_{(5; N_1)}: (\mathscr A\mathcal Q^{\otimes}_5)_{N_1}  \longrightarrow(\mathscr A\mathcal Q^{\otimes}_5)_{N_0}$ allows us to determine $(\mathscr A\mathcal Q^{\otimes}_5)_{N_d}$ for $d = 1$ solely by computing the kernel of $(\widetilde {Sq^0})_{(5; N_1)}.$ (Indeed, one has an isomorphism: $(\mathscr A\mathcal Q^{\otimes}_5)_{N_1}\cong {\rm Ker}((\widetilde {Sq^0_*})_{(5; N_1)}) \bigoplus (\mathscr A\mathcal Q^{\otimes}_5)_{N_0}.$)
\end{rem}

One of the paper's principal findings is presented below, with the observation that $N_1 = 5(2^1 - 1) + 11 \cdot 2^{1+1} = 49.$

\begin{thm}\label{dlc1}
Take a look at the weight vectors for the degree $N_1$:
 $$ 
 \omega_{(1)}:=(3,3,2,2,1),\ \ \ \ \omega_{(2)}:=(3,3,4,1,1),\ \ \ \ \omega_{(3)}:=(3,3,4,3).
$$

\begin{itemize}
\item[(i)] We have an isomorphism: 
$${\rm Ker}((\widetilde {Sq^0_*})_{(5; N_1)}) \cong  (\mathcal Q\mathcal {P}_5)(\omega_{(1)})\bigoplus (\mathcal Q\mathcal {P}_5)(\omega_{(2)})\bigoplus (\mathcal Q\mathcal {P}_5)(\omega_{(3)}).$$ 

\item[(ii)] The subspaces $(\mathcal Q\mathcal {P}_5)(\omega_{(2)})$ and $(\mathcal Q\mathcal {P}_5)(\omega_{(3)})$ are trivial. 

\item[(iii)] The subspace $(\mathcal Q\mathcal {P}_5)(\omega_{(1)})$ is 1891-dimensional.
\end{itemize}
\end{thm}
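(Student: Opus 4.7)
The plan is to exploit the weight-vector decomposition $(\mathscr A\mathcal Q^{\otimes}_5)_{N_1} \cong \bigoplus_{\deg\omega = N_1}(\mathcal Q\mathcal {P}_5)(\omega)$ from Section~\ref{s2} together with the fact that Kameko's $(\widetilde {Sq^0_*})_{(5; N_1)}$ is compatible with this grading: it restricts to an isomorphism on each summand with $\omega_1 = 5$ (sending it onto the summand indexed by $(\omega_2,\omega_3,\ldots)$) and vanishes on every summand with $\omega_1 < 5$. For part (i) I would enumerate the weight vectors $\omega$ of degree $49$ with $\omega_1 < 5$ that can support a nontrivial $(\mathcal Q\mathcal {P}_5)(\omega)$. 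A parity argument rules out $\omega_1 \in \{2,4\}$ (an even number of odd exponents among the five $a_i$ cannot sum to the odd integer $49$), and Theorem~\ref{dlSi} applied to the minimal spike $z = x_1^{31}x_2^{15}x_3^{3}$, whose weight vector is exactly $\omega_{(1)} = (3,3,2,2,1)$, eliminates $\omega_1 = 1$. It remains to list the $\omega_1 = 3$ weight vectors of degree $49$ with entries bounded by $5$ and to verify that all of them apart from $\omega_{(1)}, \omega_{(2)}, \omega_{(3)}$ force every monomial to factor through a strictly inadmissible configuration, so that Theorem~\ref{dlKS} collapses the corresponding summands.

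For part (ii), the strategy is to show directly that every monomial $u$ with $\omega(u) = \omega_{(2)}$ or $\omega(u) = \omega_{(3)}$ is inadmissible modulo $\mathcal {P}_5^{-}(\omega)$. Using the $\Sigma_5$-symmetry afforded by $\rho_1,\ldots,\rho_4$, it suffices to treat a finite list of orbit representatives. For each representative I would factorize $u = u_1 u_2^{2^r}$ or $u = w v^{2^k}$ so that $u_1$ (respectively $w$) is already recognized as strictly inadmissible in the relevant lower degree — these lists being available in \cite{D.P4} — and then apply Theorem~\ref{dlKS} to conclude that $u$ itself is inadmissible. Since the admissible monomials form a basis of $(\mathcal Q\mathcal {P}_5)(\omega)$, this yields $(\mathcal Q\mathcal {P}_5)(\omega_{(2)}) = (\mathcal Q\mathcal {P}_5)(\omega_{(3)}) = 0$.

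For part (iii), which is by far the most laborious, I would split $(\mathcal Q\mathcal {P}_5)(\omega_{(1)}) = (\mathcal Q\mathcal {P}_5^{0})(\omega_{(1)}) \oplus (\mathcal Q\mathcal {P}_5^{+})(\omega_{(1)})$ according to the $\mathscr A$-submodule decomposition $\mathcal {P}_5 = \mathcal {P}_5^{0}\oplus\mathcal {P}_5^{+}$. The $\mathcal {P}_5^{0}$-part reduces, by restriction to the proper coordinate subspaces $\mathcal {P}_s\hookrightarrow \mathcal {P}_5$, to the known hit-problem data in degree $N_1$ for $s\leq 4$ documented in \cite{F.P1, M.K, N.S1, W.W}. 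For the $\mathcal {P}_5^{+}$-part I would enumerate all monomials with every exponent positive and weight $\omega_{(1)}$, collapse by the $\Sigma_5$-orbit structure, discard the strictly inadmissible orbit representatives using Theorem~\ref{dlKS} together with the catalogue of hit relations from \cite{D.P4}, and finally certify that the surviving monomials are linearly independent in $(\mathcal Q\mathcal {P}_5)(\omega_{(1)})$. Summing the two counts should yield $1891$, and the \texttt{SAGEMATH} routines announced in Section~\ref{s1} are intended to cross-validate this total.

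The main obstacle is part (iii). Parts (i) and (ii) are essentially combinatorial filtering driven by parity and Theorem~\ref{dlKS}, but the admissible-monomial enumeration in $\mathcal {P}_5^{+}$ of weight $\omega_{(1)}$ involves a large number of $\Sigma_5$-orbit representatives, and — more delicately — establishing that each surviving candidate is genuinely admissible, rather than merely not strictly inadmissible, requires ruling out every hit relation that would produce a sum of strictly smaller monomials in the ordering of Section~\ref{s2}. It is precisely this exhaustive linear-independence verification, over roughly two thousand candidate classes, for which the computer-algebraic procedures promised in Section~\ref{s1} become indispensable.
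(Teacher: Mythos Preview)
Your proposal is correct and follows essentially the same architecture as the paper's proof. For (i) the paper reduces to degree $23$ via the factorization $x = x_ix_jx_ky^2$ and cites \cite[Lemma 4.3.1, Proposition 4.3.4]{ST} for the admissible weight vectors of $y$ (which also disposes of $\omega_{(4)} = (3,3,2,4)$), whereas you propose the equivalent direct enumeration; for (ii) the paper writes $u = vw^8$ with $\omega(v) = (3,3,4)$, lists the sixty candidate $v$'s and cites \cite{N.S} for their strict inadmissibility before invoking Theorem~\ref{dlKS}(ii), exactly matching your factorization plan; for (iii) the paper's primary argument is the \texttt{SAGEMATH} computation $\dim(\mathscr A\mathcal Q^{\otimes}_5)_{49} = 2856$ followed by subtraction of $965$, with the hand enumeration (using the constructions $\mathscr C(d,N_1)$, $\psi_{(l,\mathscr L)}$ and linear-independence tests via homomorphisms $\mathcal P_5\to\mathcal P_4$) presented as verification, so your emphasis is reversed but the content is the same.
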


\begin{rem}\label{nx2}
As is well known, $${\rm Ker}((\widetilde {Sq^0_*})_{(5; N_1)}) \cong (\mathscr A\mathcal Q^{\otimes 0}_5)_{N_1} \bigoplus ({\rm Ker}((\widetilde {Sq^0_*})_{(5; N_1)})\cap (\mathscr A\mathcal Q^{\otimes >}_5)_{N_1}).$$ On the other hand, we have the isomorphisms:
$$\begin{array}{ll}
\medskip
 (\mathscr A\mathcal Q^{\otimes 0}_5)_{N_1}\cong (\mathcal Q\mathcal {P}^{0}_5)(\omega_{(1)})\bigoplus (\mathcal Q\mathcal {P}^{0}_5)(\omega_{(2)})\bigoplus (\mathcal Q\mathcal {P}^{0}_5)(\omega_{(3)}),\\
{\rm Ker}((\widetilde {Sq^0_*})_{(5; N_1)})\cap (\mathscr A\mathcal Q^{\otimes >}_5)_{N_1}\cong (\mathcal Q\mathcal {P}^{+}_5)(\omega_{(1)})\bigoplus (\mathcal Q\mathcal {P}^{+}_5)(\omega_{(2)})\bigoplus (\mathcal Q\mathcal {P}^{+}_5)(\omega_{(3)}). 
\end{array}$$
Therefore, considering $(\mathcal Q\mathcal {P}_5)(\omega_{(j)})\cong (\mathcal Q\mathcal {P}^{0}_5)(\omega_{(j)})\bigoplus (\mathcal Q\mathcal {P}^{+}_5)(\omega_{(j)}),$ for each $j,$ and in light of Theorem \ref{dlc1}(ii), we can conclude:
$$ {\rm Ker}((\widetilde {Sq^0_*})_{(5; N_1)})\cap (\mathscr A\mathcal Q^{\otimes >}_5)_{N_1}\cong (\mathcal Q\mathcal {P}_5^{+})(\omega_{(1)}).$$
Using a result in Walker and Wood \cite[Proposition 6.2.9]{W.W}, one has 
$$ \dim (\mathcal Q\mathcal {P}_5^{0})(\omega_{(1)}) = \sum_{3 = \mu(N_1)\leq k\leq 4}\binom{5}{k}\dim (\mathcal Q\mathcal {P}_k^{+})(\omega_{(1)}).$$
According to Kameko \cite{M.K} and Sum \cite{N.S1}, we have 
$$ \begin{array}{ll}
\medskip
\dim (\mathcal Q\mathcal {P}_3^{+})(\omega_{(1)}) = \dim (\mathcal Q\mathcal {P}_3^{+})_{N_1} = 14,\\
\dim (\mathcal Q\mathcal {P}_4^{+})(\omega_{(1)}) = \dim (\mathcal Q\mathcal {P}_4^{+})_{N_1} = 154.
\end{array}$$
Combining these data and Theorem \ref{dlc1}(ii), we obtain
$$ \dim (\mathcal Q\mathcal {P}_5^{0})(\omega_{(1)}) = \dim (\mathscr A\mathcal Q^{\otimes 0}_5)_{N_1} = 910.$$
The synthesis of Theorem \ref{dlc1}, Remark \ref{nx2}, and the isomorphic relations:
$$ \begin{array}{ll}
\medskip
(\mathcal Q\mathcal {P}_5)(\omega_{(1)})&\cong (\mathcal Q\mathcal {P}^{0}_5)(\omega_{(1)})\bigoplus (\mathcal Q\mathcal {P}^{+}_5)(\omega_{(1)}),\\
\medskip
(\mathscr A\mathcal Q^{\otimes}_5)_{N_d}&\cong {\rm Ker}((\widetilde {Sq^0_*})_{(5; N_1)})\bigoplus (\mathscr A\mathcal Q^{\otimes}_5)_{N_0},\ \mbox{for all $d\geq 1,$}
\end{array}$$ 
leads directly to the following corollary.
\end{rem}

\begin{hq}\label{hqtq}
The following assertions hold:

\begin{itemize}
\item[(I)] $\dim {\rm Ker}((\widetilde {Sq^0_*})_{(5; N_1)}) =\dim  (\mathcal Q\mathcal {P}_5)(\omega_{(1)}) = 1891.$ Consequently, $$\dim ({\rm Ker}((\widetilde {Sq^0_*})_{(5; N_1)})\cap (\mathscr A\mathcal Q^{\otimes >}_5)_{N_1}) = \dim (\mathcal Q\mathcal {P}_5^{+})(\omega_{(1)}) = 1891 - 910 = 981.$$ 
\item[(II)] For each positive integer $d,$ we have $$\dim (\mathscr A\mathcal Q^{\otimes}_5)_{N_d} = 965 + \dim (\mathcal Q\mathcal {P}_5)(\omega_{(1)}) = 965 + 1891 = 2856.$$
\end{itemize}
\end{hq}

It is readily apparent that $\mu(N_1) = \alpha(N_1 + \mu(N_1)) = 3.$ Hence, by combining Corollary \ref{hqtq}(II) with the result in Sum \cite[Theorem 1.3]{N.S1}, we immediately obtain:

\begin{hq}\label{hqtq2}
For each positive integer $d,$ 
$$\dim (\mathscr A\mathcal Q^{\otimes}_6)_{5(2^{d+4}-1) + N_1.2^{d+4}}  = (2^{6}-1).\dim (\mathscr A\mathcal Q^{\otimes}_5)_{N_d} =  179928.$$
\end{hq}

\begin{rem}\label{nxtq}
In \cite{Tin2}, Nguyen Khac Tin presented four main results: \textbf{Theorem 3.2, Theorem 3.3, Theorem 3.6}, and \textbf{Theorem 3.7}. It is regrettable that, excluding \textbf{Theorem 3.2}, all other results are categorically false. We would also like to further emphasize that \textbf{Theorem 3.2} is merely a direct consequence of our previous work \cite{D.P4} on the dimension of $(\mathscr A\mathcal Q^{\otimes}_5)_{N_0 = 22},$ combined with our Theorem \ref{dlc1}(ii) and the known epimorphism of the Kameko homomorphism $(\widetilde {Sq^0_*})_{(5; N_1)}: (\mathscr A\mathcal Q^{\otimes}_5)_{N_1}  \longrightarrow(\mathscr A\mathcal Q^{\otimes}_5)_{N_0}$ (see Remarks \ref{nx} and \ref{nx2}). As a result, the validity of \textbf{Corollary 3.4} in Tin's paper \cite{Tin2} is also compromised (in fact, this corollary is also wrong), owing to its basis in the inaccuracy of \textbf{Theorem 3.3} in his paper as mentioned above.

\medskip

It is important to note that \textbf{Theorem 3.3(ii)}, the most significant key but incorrect result of Tin's paper \cite{Tin2}, surprisingly lacks a detailed proof. Moreover, \textbf{Corollary 3.4}, \textbf{Theorem 3.6}, and \textbf{Theorem 3.7} in Tin's paper are merely direct consequences of \textbf{Theorem 3.3}. Evidence for this claim can be found in our aforementioned Corollaries \ref{hqtq} and \ref{hqtq2}. To provide a more compelling refutation of the results in Tin's paper, we have developed several new algorithms in \texttt{SAGEMATH} to verify and reinforce our derived results (see the proof of Theorem \ref{dlc1} and Remark \ref{nxc} below).

\begin{itemize}
\item[(\bf A)] In \cite[\textbf{Theorem 3.3(i)}]{Tin2}, Tin proved that 
$${\rm Ker}((\widetilde {Sq^0_*})_{(5; N_1)})\cap (\mathscr A\mathcal Q^{\otimes >}_5)_{N_1} \cong \bigoplus_{1\leq j\leq 4} (\mathcal Q\mathcal {P}_5^{+})(\omega_{(j)}),$$ 
where $\omega_{(4)} = (3,3,2,4).$ Consequently, 
$${\rm Ker}((\widetilde {Sq^0_*})_{(5; N_1)}) \cong \bigoplus_{1\leq j\leq 4}(\mathcal Q\mathcal {P}_5)(\omega_{(j)}) =  \big[\bigoplus_{1\leq j\leq 3} (\mathcal Q\mathcal {P}_5)(\omega_{(j)})\big]\bigoplus (\mathcal Q\mathcal {P}_5)(\omega_{(4)}).$$  This result, however, is incorrect (see our Theorem \ref{dlc1}(i)). We give a more detailed explanation as follows: According to the proof of Theorem \ref{dlc1}(i) below, if $x$ is an admissible monomial of degree $N_1$ such that $[x]\in {\rm Ker}((\widetilde {Sq^0_*})_{(5; N_1)}),$ then $x$ is of the form $x = x_ix_jx_ky^{2},$ where $1\leq i<j<k\leq 5$ and $y$ is an admissible monomial of degree $23$ in $\mathcal P_5.$ Then, $\omega_1(x) = 3.$ Since $y$ is admissible, by \cite[Lemma 4.3.1]{ST}, $\omega(y)$ belongs to the set $\bigg\{(3,2,2,1),\ (3,4,1,1),\ (3,4,3),\ (3,2,4)\bigg\}.$ Consequently, since $\omega_1(x) = 3,$ the weight vector $\omega(x)$ belongs to the set $$\bigg\{(3, 3,2,2,1),\ (3,3,4,1,1),\ (3,3,4,3),\ (3,3,2,4)\bigg\}.$$ This set is precisely $\bigg\{\omega_{(j)} : 1 \leq j \leq 4\bigg\}.$ However, due to \cite[Proposition 4.3.4]{ST}, $(\mathcal Q\mathcal {P}_5)(3,3,2,4)=0,$ and therefore, $\omega(x)$ must belong to the set $ \bigg\{\omega_{(j)}:\ 1\leq j\leq 3\bigg\}.$

\item[(\bf B)]  In \cite[\textbf{Theorem 3.3(i)}]{Tin2}, Tin proved that $$\dim {\rm Ker}((\widetilde {Sq^0_*})_{(5; N_1)})\cap (\mathscr A\mathcal Q^{\otimes >}_5)_{N_1} = 1178.$$ 
The conclusion is totally off-base (look at our Corollary \ref{hqtq}(I)). Moreover, Tin has provided an argument that "\textit{by direct calculations, using Theorem \ref{dlKS}, we filter out and remove the inadmissible monomials in $B_5^{\otimes >}(\omega),$  so we get $|\mathcal M_{\omega}^{\otimes 5}| = 1178$}". Here, the sets $B_5^{\otimes >}(\omega)$ and $\mathcal M_{\omega}^{\otimes 5}$ are described in Tin's paper \cite{Tin2} as follows:
$$ \begin{array}{ll}
\medskip
B_5^{\otimes >}(\omega) = \big\{x_ix_jx_kF^{2}:\ 1\leq i< j<k\leq 5,\ F\in \mathcal C^{\otimes}_5(23)\big\}\cap \mathcal {P}_5^{+},\\
\mathcal M_{\omega}^{\otimes 5} = \big\{[x]\in {\rm Ker}((\widetilde {Sq^0_*})_{(5; N_1)})\cap (\mathscr A\mathcal Q^{\otimes >}_5)_{N_1}:\ \mbox{$x$ is admissible}\big\}.
\end{array}$$
It is straightforward to notice that in both $B_5^{\otimes >}(\omega)$ and $\mathcal{M}_{\omega}^{\otimes 5},$ the notation "$\omega$" seems meaningless since the elements in these sets have no relation to "$\omega$". This raises the question: \textit{what does "$\omega$" represent in this case?} In fact, after correcting the inaccurate result in \textbf{Theorem 3.3(i)} of Tin's paper \cite{Tin2}, readers can easily observe from our Theorem \ref{dlc1} and Remark \ref{nx2} that the symbol "$\omega$" herein represents the weight vector $\omega_{(1)} = (3,3,2,2,1)$. 

The argument presented by Tin, which we mentioned above, is inherently flawed because simply eliminating inadmissible monomials does not suffice to determine the dimension of the space ${\rm Ker}((\widetilde {Sq^0_*})_{(5; N_1)})\cap (\mathscr A\mathcal Q^{\otimes >}_5)_{N_1}.$ We cannot ascertain whether the remaining monomials after exclusion form a minimal $\mathscr A$-generator set for $\mathcal {P}_5^{+}(\omega_{(1)}).$  To establish this, it is essential to demonstrate that the set of classes represented by these monomials is linearly independent within ${\rm Ker}((\widetilde {Sq^0_*})_{(5; N_1)})\cap (\mathscr A\mathcal Q^{\otimes >}_5)_{N_1}$ (see the proof of Theorem \ref{dlc1}(iii) below for more understanding). This illuminates the essential fallacy in Tin's argument as affronementioned. In addition, to verify and gain further insight into this matter, readers are encouraged to consult our earlier publication \cite{D.P4}, which details the computation of an admissible monomial basis for ${\rm Ker}((\widetilde {Sq^0_*})_{(5; N)})\cap (\mathscr A\mathcal Q^{\otimes >}_5)_{N}$ where $N = 47.$ In general, the computation process is rather lengthy and complicated. 

Recall that Tin's \textbf{Theorem 3.3(ii)} erroneously states that $$\dim  ({\rm Ker}((\widetilde {Sq^0_*})_{(5; N_1)})\cap (\mathscr A\mathcal Q^{\otimes >}_5)_{N_1}) = \sum_{1\leq j\leq 4}\dim (\mathcal Q\mathcal {P}_5^{+})(\omega_{(j)}) = 1178,$$ as evidenced by item ({\bf A}) and our Corollary \ref{hqtq}(I). Furthermore, he neglected to calculate the individual dimensions of $(\mathcal Q\mathcal {P}_5^{+})(\omega_{(j)}),$ only providing their sum. As we will see, the dimensions of $(\mathcal Q\mathcal {P}_5^{+})(\omega_{(j)})$ have been completely determined in this work (see item ({\bf A}) for $\dim (\mathcal Q\mathcal {P}_5^{+})(\omega_{(4)}),$ and see Theorem \ref{dlc1}(ii) and (iii) for $\dim (\mathcal Q\mathcal {P}_5^{+})(\omega_{(j)})$ for $1\leq j\leq 3$). Consequently, the isomorphism $${\rm Ker}((\widetilde {Sq^0_*})_{(5; N_1)})\cap (\mathscr A\mathcal Q^{\otimes >}_5)_{N_1} \cong \bigoplus_{1\leq j\leq 4} (\mathcal Q\mathcal {P}_5^{+})(\omega_{(j)}) $$ presented in \textbf{Theorem 3.3(i)} of Tin's paper appears to be inconsequential, as he was unable to explicitly calculate $\dim (\mathcal Q\mathcal {P}_5^{+})(\omega_{(j)}).$ 

We also note that the dimension of $(\mathcal Q\mathcal {P}^+_5)(\omega_{(j)})$ is commonly determined through two primary methods: direct manual calculations using established tools or the application of computational algebra software. Tin's paper \cite{Tin2}, however, does not employ either of these techniques. 

Moreover, the most critical result in Tin's paper \cite{Tin2}, \textbf{Theorem 3.3(ii)}, not only lacks a detailed proof but also fails to provide any algorithm built into computer algebra systems to verify its correctness. Specifically, there is no verification given in detail for the result $\sum_{1\leq j\leq 4}\dim(\mathcal Q\mathcal {P}^+5)(\omega_{(j)}) = 1178$ that he presented in \textbf{Theorem 3.3(ii)} in \cite{Tin2}.
%In \cite{Tin2}, Tin says that, the computer algebra system \texttt{SAGEMATH} could be used to verify $\sum_{j}\dim(\mathcal Q\mathcal {P}^+_5)(\omega_{(j)})$ (which coincides with $\dim {\rm Ker}((\widetilde {Sq^0_*})_{(5; N_1)})\cap (\mathscr A\mathcal Q^{\otimes >}_5)_{N_1}$). This claim, nevertheless, lacks substantiation due to the absence of any \texttt{SAGEMATH} algorithms in his paper \cite{Tin2}. Moreover, the absence of such algorithms raises questions about the verifiability of his assertion.
Therefore, our paper synthesizes both methods to explicitly derive $\dim (\mathcal Q\mathcal {P}_5^{+})(\omega_{(j)})$ (see our Corollary \ref{hqtq}(I) and the proof of Theorem \ref{dlc1}(ii) and (iii). In particular, as stated in our Theorem \ref{dlc1}(ii), $(\mathcal Q\mathcal {P}_5)(\omega_{(2)})$ and $(\mathcal Q\mathcal {P}_5)(\omega_{(3)})$ are zero, which implies that $(\mathcal Q\mathcal {P}^+_5)(\omega_{(2)})$ and $(\mathcal Q\mathcal {P}^+_5)(\omega_{(3)})$ must likewise be zero.)

\item[(\bf C)] In \cite[\textbf{Corollary 3.4}]{Tin2}, Tin stated that there exist exactly 3053 admissible monomials of degree 49 in $\mathcal {P}_5.$ Despite this, our Corollary \ref{hqtq}(II) has shown that Tin's \textbf{Corollary 3.4} is erroneous.

\item[(\bf D)] In \cite[\textbf{Theorem 3.6}]{Tin2}, Tin stated that $$\dim (\mathscr A\mathcal Q^{\otimes}_5)_{N_d} = 3053,\ \mbox{for all $d\geq 1.$}$$ 
Nonetheless, our result in Corollary \ref{hqtq}(II) reveals that Tin's \textbf{Theorem 3.6} is wrong.

\item[(\bf E)] In \cite[\textbf{Theorem 3.7}]{Tin2}, Tin proved that $$\dim (\mathscr A\mathcal Q^{\otimes}_6)_{5(2^{d+4}-1) + N_1.2^{d+4}} = (2^{6}-1).\dim (\mathscr A\mathcal Q^{\otimes}_5)_{N_1} =  192339,$$ 
for any positive integer $d.$ However, our result presented in Corollary \ref{hqtq2} demonstrates that Tin's \textbf{Theorem 3.7} is  wholly inaccurate.

\end{itemize}
\end{rem}

Let us now delve into the detailed proof of Theorem \ref{dlc1}.

\begin{proof}[Proof of Theorem \ref{dlc1}] We commence with proving item (i) of the theorem. For a seamless presentation, we reiterate a simple argument found in the proof of \textbf{Theorem 3.3(i)} from \cite{Tin2}: If $x$ is an admissible monomial of degree $N_1$ in $\mathcal {P}_5$ such that $[x]$ belongs to ${\rm Ker}((\widetilde {Sq^0_*})_{(5; N_1)}),$ then $\omega_1(x) = 3.$ Indeed,  we observe that $z = x_1^{31}x_2^{15}x_3^{3}$ is minimal spike of degree $N_1$ in $\mathcal {P}_5$ and $\omega(z) = (3,3,2,2,1).$ Then, by Lemma \ref{dlPS}, $z$ is admissible. Because $x$ is admissible and $\deg(x) = N_1$ is odd, according to Theorem \ref{dlSi}, either $\omega_1(x) = 1$ or $\omega_1(x) = 5.$ If $\omega_1(x) = 5,$ then $x$ is of the form $\prod_{1\leq j\leq 5}x_jy^{2}$ with $y$ is a monomial of degree $N_0$ in $\mathcal {P}_5.$ Since $\prod_{1\leq j\leq 5}x_jy^{2}$ is admissible, following Theorem \ref{dlKS}, $y$ is admissible and so, $(\widetilde {Sq^0_*})_{(5; N_1)}([\prod_{1\leq j\leq 5}x_jy^{2}]) = [y]\neq 0.$ This contradicts with the fact that $[x] = [\prod_{1\leq j\leq 5}x_jy^{2}]\in {\rm Ker}((\widetilde {Sq^0_*})_{(5; N_1)}).$ Thus, we must have  $\omega_1(x) = 3$ and therefore by the above arguments,  $x$ can be represented in the form $x_jx_jx_ky^{2},$ whenever $1\leq i<j<k\leq 5$ and $y$ is an admissible monomial of degree $23$ in $\mathcal {P}_5.$ 

Now, according to \cite[Lemma 4.3.1, Proposition 4.3.4]{ST}, the weight vector $\omega(y)$ takes one of three forms: either $(3,2,2,1),$ $(3,4,1,1),$ or $(3,4,3).$ Given that $\omega_1(x) = 3,$ we can deduce that the weight vector $\omega(x)$ must be one of the following sequences:
\begin{align*}
\omega_{(1)}:= (3,3,2,2,1), \
\omega_{(2)}:= (3,3,4,1,1), \text{ or} \
\omega_{(3)}:= (3,3,4,3).
\end{align*}
Thus, we obtain an isomorphism ${\rm Ker}((\widetilde {Sq^0_*})_{(5; N_1)}) \cong \bigoplus_{1\leq j\leq 3} (\mathcal Q\mathcal {P}_5)(\omega_{(j)}),$ thereby confirming the validity of item (i) of the theorem.

\medskip 

We will now proceed to prove item (ii) of the theorem. Suppose that $u$ is a monomial of degree $N_1$ in the $\mathscr A$-module $\mathcal {P}_5$ such that either $[u]_{\omega_{(2)}}\in  (\mathcal Q\mathcal {P}_5)(\omega_{(2)})$ or $[u]_{\omega_{(3)}}\in  (\mathcal Q\mathcal {P}_5)(\omega_{(3)}).$ By a simple computation, we notice that since $u = x_jx_jx_ky^{2},$ $u$ has the form $vw^{8},$ where $w$ is an suitable monomial of degree $3$ in $\mathcal {P}_5$ and $v$ is one of the following monomials:

\begin{center}
\begin{tabular}{llll}
${\rm v}_{1}=x_1^{3}x_2^{4}x_3^{4}x_4^{7}x_5^{7},$ & ${\rm v}_{2}=x_1^{3}x_2^{4}x_3^{5}x_4^{6}x_5^{7},$ & ${\rm v}_{3}=x_1^{3}x_2^{4}x_3^{5}x_4^{7}x_5^{6},$ & ${\rm v}_{4}=x_1^{3}x_2^{4}x_3^{6}x_4^{5}x_5^{7},$ \\
${\rm v}_{5}=x_1^{3}x_2^{4}x_3^{6}x_4^{7}x_5^{5},$ & ${\rm v}_{6}=x_1^{3}x_2^{4}x_3^{7}x_4^{4}x_5^{7},$ & ${\rm v}_{7}=x_1^{3}x_2^{4}x_3^{7}x_4^{5}x_5^{6},$ & ${\rm v}_{8}=x_1^{3}x_2^{4}x_3^{7}x_4^{6}x_5^{5},$ \\
${\rm v}_{9}=x_1^{3}x_2^{4}x_3^{7}x_4^{7}x_5^{4},$ & ${\rm v}_{10}=x_1^{3}x_2^{5}x_3^{4}x_4^{6}x_5^{7},$ & ${\rm v}_{11}=x_1^{3}x_2^{5}x_3^{4}x_4^{7}x_5^{6},$ & ${\rm v}_{12}=x_1^{3}x_2^{5}x_3^{5}x_4^{6}x_5^{6},$ \\
${\rm v}_{13}=x_1^{3}x_2^{5}x_3^{6}x_4^{4}x_5^{7},$ & ${\rm v}_{14}=x_1^{3}x_2^{5}x_3^{6}x_4^{5}x_5^{6},$ & ${\rm v}_{15}=x_1^{3}x_2^{5}x_3^{6}x_4^{6}x_5^{5},$ & ${\rm v}_{16}=x_1^{3}x_2^{5}x_3^{6}x_4^{7}x_5^{4},$ \\
${\rm v}_{17}=x_1^{3}x_2^{5}x_3^{7}x_4^{4}x_5^{6},$ & ${\rm v}_{18}=x_1^{3}x_2^{5}x_3^{7}x_4^{6}x_5^{4},$ & ${\rm v}_{19}=x_1^{3}x_2^{6}x_3^{4}x_4^{5}x_5^{7},$ & ${\rm v}_{20}=x_1^{3}x_2^{6}x_3^{4}x_4^{7}x_5^{5},$ \\
${\rm v}_{21}=x_1^{3}x_2^{6}x_3^{5}x_4^{4}x_5^{7},$ & ${\rm v}_{22}=x_1^{3}x_2^{6}x_3^{5}x_4^{5}x_5^{6},$ & ${\rm v}_{23}=x_1^{3}x_2^{6}x_3^{5}x_4^{6}x_5^{5},$ & ${\rm v}_{24}=x_1^{3}x_2^{6}x_3^{5}x_4^{7}x_5^{4},$ \\
${\rm v}_{25}=x_1^{3}x_2^{6}x_3^{6}x_4^{5}x_5^{5},$ & ${\rm v}_{26}=x_1^{3}x_2^{6}x_3^{7}x_4^{4}x_5^{5},$ & ${\rm v}_{27}=x_1^{3}x_2^{6}x_3^{7}x_4^{5}x_5^{4},$ & ${\rm v}_{28}=x_1^{3}x_2^{7}x_3^{4}x_4^{4}x_5^{7},$ \\
${\rm v}_{29}=x_1^{3}x_2^{7}x_3^{4}x_4^{5}x_5^{6},$ & ${\rm v}_{30}=x_1^{3}x_2^{7}x_3^{4}x_4^{6}x_5^{5},$ & ${\rm v}_{31}=x_1^{3}x_2^{7}x_3^{4}x_4^{7}x_5^{4},$ & ${\rm v}_{32}=x_1^{3}x_2^{7}x_3^{5}x_4^{4}x_5^{6},$ \\
${\rm v}_{33}=x_1^{3}x_2^{7}x_3^{5}x_4^{6}x_5^{4},$ & ${\rm v}_{34}=x_1^{3}x_2^{7}x_3^{6}x_4^{4}x_5^{5},$ & ${\rm v}_{35}=x_1^{3}x_2^{7}x_3^{6}x_4^{5}x_5^{4},$ & ${\rm v}_{36}=x_1^{3}x_2^{7}x_3^{7}x_4^{4}x_5^{4},$ \\
${\rm v}_{37}=x_1^{7}x_2^{2}x_3^{4}x_4^{5}x_5^{7},$ & ${\rm v}_{38}=x_1^{7}x_2^{2}x_3^{4}x_4^{7}x_5^{5},$ & ${\rm v}_{39}=x_1^{7}x_2^{2}x_3^{5}x_4^{4}x_5^{7},$ & ${\rm v}_{40}=x_1^{7}x_2^{2}x_3^{5}x_4^{5}x_5^{6},$ \\
${\rm v}_{41}=x_1^{7}x_2^{2}x_3^{5}x_4^{6}x_5^{5},$ & ${\rm v}_{42}=x_1^{7}x_2^{2}x_3^{5}x_4^{7}x_5^{4},$ & ${\rm v}_{43}=x_1^{7}x_2^{2}x_3^{6}x_4^{5}x_5^{5},$ & ${\rm v}_{44}=x_1^{7}x_2^{2}x_3^{7}x_4^{4}x_5^{5},$ \\
${\rm v}_{45}=x_1^{7}x_2^{2}x_3^{7}x_4^{5}x_5^{4},$ & ${\rm v}_{46}=x_1^{7}x_2^{3}x_3^{4}x_4^{4}x_5^{7},$ & ${\rm v}_{47}=x_1^{7}x_2^{3}x_3^{4}x_4^{5}x_5^{6},$ & ${\rm v}_{48}=x_1^{7}x_2^{3}x_3^{4}x_4^{6}x_5^{5},$ \\
${\rm v}_{49}=x_1^{7}x_2^{3}x_3^{4}x_4^{7}x_5^{4},$ & ${\rm v}_{50}=x_1^{7}x_2^{3}x_3^{5}x_4^{4}x_5^{6},$ & ${\rm v}_{51}=x_1^{7}x_2^{3}x_3^{5}x_4^{6}x_5^{4},$ & ${\rm v}_{52}=x_1^{7}x_2^{3}x_3^{6}x_4^{4}x_5^{5},$ \\
${\rm v}_{53}=x_1^{7}x_2^{3}x_3^{6}x_4^{5}x_5^{4},$ & ${\rm v}_{54}=x_1^{7}x_2^{3}x_3^{7}x_4^{4}x_5^{4},$ & ${\rm v}_{55}=x_1^{7}x_2^{6}x_3^{2}x_4^{5}x_5^{5},$ & ${\rm v}_{56}=x_1^{7}x_2^{6}x_3^{3}x_4^{4}x_5^{5},$ \\
${\rm v}_{57}=x_1^{7}x_2^{6}x_3^{3}x_4^{5}x_5^{4},$ & ${\rm v}_{58}=x_1^{7}x_2^{7}x_3^{2}x_4^{4}x_5^{5},$ & ${\rm v}_{59}=x_1^{7}x_2^{7}x_3^{2}x_4^{5}x_5^{4},$ & ${\rm v}_{60}=x_1^{7}x_2^{7}x_3^{3}x_4^{4}x_5^{4}.$
\end{tabular}%
\end{center}

For instance, if $u = x_jx_jx_ky^{2} = x_1^{3}x_2^{6}x_3^{4}x_4^{5}x_5^{31}$ then $u = x_1^{3}x_2^{6}x_3^{4}x_4^{5}x_5^{7}.(x_5^{3})^{8} = v_{19}w^{8},$ where $w = x_5^{3}\in \mathcal {P}_5$ and $[u]_{\omega_{(2)}}\in (\mathcal Q\mathcal {P}_5)(\omega_{(2)}).$ If $u = x_jx_jx_ky^{2} = x_1^{3}x_2^{6}x_3^{12}x_4^{13}x_5^{15}$ then $u = (x_1^{3}x_2^{6}x_3^{4}x_4^{5}x_5^{7}).(x_3x_4x_5)^{8} = v_{19}w^{8},$ where $w = x_3x_4x_5\in \mathcal {P}_5$ and $[u]_{\omega_{(3)}}\in (\mathcal Q\mathcal {P}_5)(\omega_{(3)}).$

It is straightforward to check that $\omega(v_j) = (3,3,4)$ for $1\leq j\leq 60.$ Furthermore, every monomial $v_j$ exhibits strict inadmissibility (refer to \cite{N.S}), consequently ensuring their inadmissibility. We observe that $\omega_3(v_j) = 4\neq 0$ and $\omega_i(v_j) = 0$ for all $i > 3$ and $1\leq j\leq 60.$ Due to Theorem \ref{dlKS}(ii), $u = x_jx_jx_ky^{2} = vw^{7}$ is inadmissible. Hence, $[u]_{\omega_{(j)}} = 0$ for all $[u]_{\omega_{(j)}}\in (\mathcal Q\mathcal {P}_5)(\omega_{(j)}),\, j = 2,\, 3.$ This completes the proof for item (ii). 

\medskip

Finally, we proceed to prove item (iii) of the theorem. To address this item, we use an algorithm in \texttt{SAGEMATH} to determine the dimension of $(\mathscr A\mathcal Q^{\otimes}_5)_{N_1}.$ Given the isomorphism 
$$ \begin{array}{ll}
\medskip
 (\mathscr A\mathcal Q^{\otimes}_5)_{N_1}&\cong {\rm Ker}((\widetilde {Sq^0_*})_{(5; N_1)}) \bigoplus (\mathscr A\mathcal Q^{\otimes}_5)_{N_0},\\
&\cong (\mathcal Q\mathcal {P}_5)(\omega_{(1)})\bigoplus (\mathscr A\mathcal Q^{\otimes}_5)_{N_0},
\end{array}
$$ 
and utilizing the results from items (i) and (ii), we can establish the result for item (iii). And therefore, the proof of the theorem is complete.

The calculation of $\dim (\mathscr A\mathcal Q^{\otimes}_5)_{N_1}$ is achieved through our new \texttt{SAGEMATH} algorithm presented below, which enhances the original algorithm in \texttt{MAGMA} from our earlier work \cite[Appendix A.9]{D.P7}.

\medskip

\begin{lstlisting}[basicstyle={\fontsize{8.5pt}{9.5pt}\ttfamily},]

N = 49
MAX = 16  # Largest j so that Sq(j, x) will be called

R = PolynomialRing(GF(2), 'x', 5)
x1, x2, x3, x4, x5 = R.gens()

SQH = []
for j in range(1, MAX + 1):
    PS = PowerSeriesRing(R, 't', default_prec=j + 1)
    t = PS.gen()
    Sqhom = R.hom([x + t * x*x for x in [x1, x2, x3, x4, x5]], codomain=PS)
    SQH.append(Sqhom)

def Sq(j, x):
    Sqhom = SQH[j - 1]
    c = Sqhom(x).coefficients()
    if j >= len(c):
        return R(0)
    else:
        return c[j]

M = list(R.monomials_of_degree(N))
V = VectorSpace(GF(2), len(M))

def MtoV(m):  # convert sum m of monomials to vector x
    if m.is_zero():
        return V(0)
    else:
        cv = [M.index(mm) for mm in m.monomials()]
        return V([1 if i in cv else 0 for i in range(len(M))])

def MtoSM(S):  # convert seq of monomial sum to sparse matrix
    K = V.base_ring()
    one = K(1)
    A = Matrix(K, 0, V.dimension())
    for m in S:
        if m.is_zero():
            continue
        row = [0] * V.dimension()
        for t in m.monomials():
            if t in M:
                row[M.index(t)] = one
        A = A.stack(Matrix(K, [row]))
    return A

M1 = list(R.monomials_of_degree(N - 1))
M2 = list(R.monomials_of_degree(N - 2))
M4 = list(R.monomials_of_degree(N - 4))
M8 = list(R.monomials_of_degree(N - 8))
M16 = list(R.monomials_of_degree(N - 16))


[len(m) for m in [M1, M2, M4, M8, M16]]

# Get S1, S2, S4, S8, S16, S32
S1 = [Sq(1, x) for x in M1]
S2 = [Sq(2, x) for x in M2]
S4 = [Sq(4, x) for x in M4]
S8 = [Sq(8, x) for x in M8]
S16 = [Sq(16, x) for x in M16]


CumulativeRank = lambda XS, k: Matrix(GF(2), sum([XS[i].rows() for i in range(k)], [])).rank()

# Get sparse matrices
XS = [MtoSM(S) for S in [S1, S2, S4, S8, S16]]

# Ranks
print("\nS1")
print(XS[0].rank())

print("\nS2")
print(XS[1].rank())
print(CumulativeRank(XS, 2))

print("\nS4")
print(XS[2].rank())
print(CumulativeRank(XS, 3))

print("\nS8")
print(XS[3].rank())
print(CumulativeRank(XS, 4))

print("\nS16")
print(XS[4].rank())
D = CumulativeRank(XS, 5)
print(D)

print("Dim V:", V.dimension())
print("Dim H:", D)
print("Dim V - Dim H:", V.dimension() - D)

\end{lstlisting}

\medskip 

From this algorithm, we obtain $\dim V = 292825$ and $\dim H = 289969.$ Therefore, $$\dim (\mathscr A\mathcal Q^{\otimes}_5)_{N_1 = 49}   = \dim V - \dim H = 292825-  289969 = 2856.$$

\begin{cy}\label{cyqt}
An important observation is that setting MAX to either $S{32}$ or $S{16}$ in the aforementioned algorithm does not alter the dimension of $H$, which consistently remains 289969 (refer to Remark \ref{nxc} below for further details). This implies that for $N = 49 = N_1$, choosing MAX as $S16$ is adequate. Moreover, we note that the dimension of $V$ consistently remains 292825, irrespective of which power of 2 is selected for MAX. By adjusting the parameters in the \texttt{SAGEMATH} algorithm of Remark \ref{nxc} below, specifically setting $N = 49$ instead of $N = 7$ and MAX =16 instead of MAX = 4, one can explicitly determine a basis for $(\mathscr A\mathcal Q^{\otimes}_5)_{N_1 = 49}.$ Notwithstanding, as we will see, the degree $N = 49 = N_1$ in the 5-variable case is quite large, so optimizing the algorithm for execution on a high-performance computing system is essential. 
\end{cy}
%This optimization guarantees accurate results for the basis of $(\mathscr A\mathcal Q^{\otimes}_5)_{N_1}$, encompassing all classes represented by 2856 admissible monomials of degree $N_1$ in $\mathcal P_5$. %We believe it would be more engaging to omit the specifics of algorithm calibration herein, instead proposing them as a reasonably approachable exercise for readers interested in further exploration of this subject.

\medskip

Since $(\mathscr A\mathcal Q^{\otimes}_5)_{N_1}\cong (\mathcal Q\mathcal {P}_5)(\omega_{(1)})\bigoplus (\mathscr A\mathcal Q^{\otimes}_5)_{N_0},$ and $\dim (\mathscr A\mathcal Q^{\otimes}_5)_{N_0} = 965$ (see \cite{D.P4}), we obtain $$\dim (\mathcal Q\mathcal {P}_5)(\omega_{(1)})  = \dim (\mathscr A\mathcal Q^{\otimes}_5)_{N_1} - \dim  (\mathscr A\mathcal Q^{\otimes}_5)_{N_0} =  2856 - 965 = 1891.$$
Of course, this dimension can be verified by hand as well, with the computational steps being essentially the same as in our previous work \cite{D.P4}. Note that by Remark \ref{nx2}, we only need to check the result for the dimension of $(\mathcal Q\mathcal {P}^+_5)(\omega_{(1)})$ as stated in Corollary \ref{hqtq}(I). What follows is a complete explanation of how to compute this subspace's dimension using a direct method.

Let's revisit the fact that according to \cite{ST}, $\dim (\mathscr A\mathcal Q^{\otimes 0}_5)_{23} = \dim (\mathcal Q\mathcal P^{0}_5)(3,2,2,1) = 635,$ and owing to Proposition 4.3.2 in \cite{ST}, $\dim (\mathcal Q\mathcal P^{+}_5)(3,2,2,1) = 293.$ The proof of Theorem \ref{dlc1}(i), combined with these results, reveals that monomials $x$ in $\mathcal P^+_5(\omega_{(1)})$ invariably take the form $x = x_ix_jx_ky^2$, with $1\leq i<j<k\leq 5$ and $y$ representing an admissible element in $\mathcal P_5(3,2,2,1).$ This enables us to explicitly construct all monomials in $\mathcal P_5(\omega_{(1)}).$

For $1\leq l\leq 5,$ we define the $\mathscr A$-homomorphism $\mathsf{q}_{(l,\,5)}: \mathcal P_4\longrightarrow \mathcal P_5$ as follows:
$$ \mathsf{q}_{(l,\,5)}(x_j) = \left\{ \begin{array}{ll}
{x_j}&\text{if }\;1\leq j \leq l-1, \\[1mm]
x_{j+1}& \text{if}\; l\leq j \leq 4.
\end{array} \right.$$

Let $\mathcal N_5$ be the collection of pairs $(l, \mathscr L)$, where $\mathscr L = (l_1,l_2,\ldots, l_r)$ satisfies $1\leq l < l_1< l_2 < \ldots < l_r\leq 5$ and $0\leq r \leq 4.$ We stipulate that $\mathscr L = \emptyset$ if $r = 0$. For every $(l, \mathscr L)\in\mathcal N_5$ and $1\leq u < r$, we denote a monomial $x_{(\mathscr L,\, u)} := x_{l_u}^{2^{r-1} + 2^{r-2} + \cdots + 2^{r-u}}\prod_{u < d\leq r}x_{l_d}^{2^{r-d}}$, with the convention $t_{(\emptyset, 1)} = 1$.

Consider the $\mathbb Z/2$-linear function  $\psi_{(l, \mathscr L)}: \mathcal P_4\longrightarrow \mathcal P_5$ defined in \cite{N.S1} by:
$$  \psi_{(l, \mathscr L)}(\prod_{1\leq j\leq 4}x_j^{a_j}) = \left\{ \begin{array}{ll}
\dfrac{x_l^{2^{r} - 1}\mathsf{q}_{(l,\,5)}(\prod_{1\leq k\leq 4}x_k^{a_k})}{x_{(\mathscr L,\,u)}}& \mbox{if $\exists\ u$ satisfying \eqref{ptct} below,}\\
0&\mbox{otherwise},
\end{array}\right.$$
\begin{equation}\tag{*}\label{ptct} 
\left\{ \begin{array}{ll}
a_{l_1 - 1} +1= \cdots = a_{l_{(u-1)} - 1} +1 = 2^{r},\\[1mm]
 a_{l_{u} - 1} + 1 > 2^{r},\\[1mm]
\alpha_{r-d}(a_{l_{u} - 1}) -1 = 0,\, 1\leq d\leq u, \\[1mm]
\alpha_{r-d}(a_{l_{d}-1}) -1 = 0,\, \ u+1 \leq d \leq r.
\end{array}\right.
\end{equation}
One has the following observation. If  $\mathscr L = \emptyset,$ then $\psi_{(l, \mathscr L)} = \mathsf{q}_{(l,\,5)}$ for $1\leq l\leq 5.$ It is in fact not hard to show that if $\psi_{(l, \mathscr L)}( \prod_{1\leq k\leq 4}x_k^{a_k})\neq  0,$ then $\omega(\psi_{(l, \mathscr L)}( \prod_{1\leq k\leq 4}x_k^{a_k})) = \omega( \prod_{1\leq k\leq 4}x_k^{a_k}).$ 

Denote by $\mathscr C_{N}$ the set of all admissible monomials of degree $N$ in $\mathcal P^+_4.$ We set
$$ \begin{array}{ll}
\medskip
  \widetilde{\Phi^{+}}(\mathscr C_{N}) &:= \bigcup\limits_{(l, \mathscr L)\in\mathcal{N}_5,\;1 \leq \ell(\mathscr L) \leq 4}\psi_{(l, \mathscr L)}(\mathscr C_{N}),\\
\mathscr C(d, N) &:= \bigg\{x_l^{2^{d}-1}\mathsf{q}_{(l, 5)}(x)|\, x\in \mathscr C_{N+1-2^{d}},\, 1\leq l\leq 5\bigg\},\ 1 < d\leq 5,\, d\neq 3.
\end{array}$$
Due to \cite[Theorem 4]{Mothebe}, we can conclude that all the monomials in $\mathscr C(d, N_1)$ are admissible. A simple calculation, employing the results in \cite{N.S1} and eliminating duplicate monomials, leads us to the result: $$\bigg| \mathscr D:=\bigcup_{d}\mathscr C(d, N_1)\bigcup \widetilde{\Phi^{+}}(\mathscr C_{N_1})\bigg| = 743.$$
In the first stage, we filter inadmissible monomials using this result, Theorem \ref{dlKS}, and \cite[Lemma 4.2]{T.N}, retaining 505 monomials in $\mathcal P_5^+(\omega_{(1)})$. %The second stage involves direct calculations to eliminate 267 more inadmissible monomials. To be precise, 
Our next step is to classify all these monomials into 159 distinct monomial groups, characterized by the following forms:

\begin{center}
\begin{tabular}{lll}
${\bf GR}_{1}/ x_1x_2^{2}x_3^{7}x_4^{9}x_5^{30}$, & ${\bf GR}_{2}/ x_1x_2^{2}x_3^{7}x_4^{11}x_5^{28}$, & ${\bf GR}_{3}/ x_1x_2^{2}x_3^{7}x_4^{12}x_5^{27}$, \\
${\bf GR}_{4}/ x_1x_2^{2}x_3^{8}x_4^{15}x_5^{23}$, & ${\bf GR}_{5}/ x_1x_2^{2}x_3^{11}x_4^{12}x_5^{23}$, & ${\bf GR}_{6}/ x_1x_2^{2}x_3^{11}x_4^{15}x_5^{20}$, \\
${\bf GR}_{7}/ x_1x_2^{2}x_3^{12}x_4^{15}x_5^{19}$, & ${\bf GR}_{8}/ x_1x_2^{2}x_3^{14}x_4^{15}x_5^{17}$, & ${\bf GR}_{9}/ x_1x_2^{2}x_3^{15}x_4^{15}x_5^{16}$, \\
${\bf GR}_{10}/ x_1x_2^{3}x_3^{4}x_4^{11}x_5^{30}$, & ${\bf GR}_{11}/ x_1x_2^{3}x_3^{4}x_4^{14}x_5^{27}$, & ${\bf GR}_{12}/ x_1x_2^{3}x_3^{6}x_4^{11}x_5^{28}$, \\
${\bf GR}_{13}/ x_1x_2^{3}x_3^{6}x_4^{12}x_5^{27}$, & ${\bf GR}_{14}/ x_1x_2^{3}x_3^{7}x_4^{8}x_5^{30}$, & ${\bf GR}_{15}/ x_1x_2^{3}x_3^{7}x_4^{10}x_5^{28}$, \\
${\bf GR}_{16}/ x_1x_2^{3}x_3^{7}x_4^{12}x_5^{26}$, & ${\bf GR}_{17}/ x_1x_2^{3}x_3^{7}x_4^{14}x_5^{24}$, & ${\bf GR}_{18}/ x_1x_2^{3}x_3^{8}x_4^{14}x_5^{23}$, \\
${\bf GR}_{19}/ x_1x_2^{3}x_3^{8}x_4^{15}x_5^{22}$, & ${\bf GR}_{20}/ x_1x_2^{3}x_3^{11}x_4^{14}x_5^{20}$, & ${\bf GR}_{21}/ x_1x_2^{3}x_3^{12}x_4^{14}x_5^{19}$, \\
${\bf GR}_{22}/ x_1x_2^{3}x_3^{14}x_4^{15}x_5^{16}$, & ${\bf GR}_{23}/ x_1x_2^{4}x_3^{7}x_4^{10}x_5^{27}$, & ${\bf GR}_{24}/ x_1x_2^{4}x_3^{7}x_4^{11}x_5^{26}$, \\
${\bf GR}_{25}/ x_1x_2^{4}x_3^{10}x_4^{15}x_5^{19}$, & ${\bf GR}_{26}/ x_1x_2^{4}x_3^{11}x_4^{15}x_5^{18}$, & ${\bf GR}_{27}/ x_1x_2^{5}x_3^{10}x_4^{14}x_5^{19}$, \\
${\bf GR}_{28}/ x_1x_2^{5}x_3^{10}x_4^{15}x_5^{18}$, & ${\bf GR}_{29}/ x_1x_2^{6}x_3^{7}x_4^{8}x_5^{27}$, & ${\bf GR}_{30}/ x_1x_2^{6}x_3^{7}x_4^{9}x_5^{26}$, \\
${\bf GR}_{31}/ x_1x_2^{6}x_3^{7}x_4^{11}x_5^{24}$, & ${\bf GR}_{32}/ x_1x_2^{6}x_3^{8}x_4^{15}x_5^{19}$, & ${\bf GR}_{33}/ x_1x_2^{6}x_3^{10}x_4^{15}x_5^{17}$, \\
${\bf GR}_{34}/ x_1x_2^{6}x_3^{11}x_4^{15}x_5^{16}$, & ${\bf GR}_{35}/ x_1x_2^{7}x_3^{7}x_4^{8}x_5^{26}$, & ${\bf GR}_{36}/ x_1x_2^{7}x_3^{7}x_4^{10}x_5^{24}$, \\
${\bf GR}_{37}/ x_1x_2^{7}x_3^{8}x_4^{10}x_5^{23}$, & ${\bf GR}_{38}/ x_1x_2^{7}x_3^{8}x_4^{11}x_5^{22}$, & ${\bf GR}_{39}/ x_1x_2^{7}x_3^{8}x_4^{15}x_5^{18}$, \\
${\bf GR}_{40}/ x_1x_2^{7}x_3^{9}x_4^{10}x_5^{22}$, & ${\bf GR}_{41}/ x_1x_2^{7}x_3^{9}x_4^{14}x_5^{18}$, & ${\bf GR}_{42}/ x_1x_2^{7}x_3^{10}x_4^{11}x_5^{20}$, \\
${\bf GR}_{43}/ x_1x_2^{7}x_3^{10}x_4^{14}x_5^{17}$, & ${\bf GR}_{44}/ x_1x_2^{7}x_3^{10}x_4^{15}x_5^{16}$, & ${\bf GR}_{45}/ x_1x_2^{7}x_3^{11}x_4^{14}x_5^{16}$, \\
${\bf GR}_{46}/ x_1^{2}x_2^{3}x_3^{4}x_4^{11}x_5^{29}$, & ${\bf GR}_{47}/ x_1^{2}x_2^{3}x_3^{4}x_4^{13}x_5^{27}$, & ${\bf GR}_{48}/ x_1^{2}x_2^{3}x_3^{5}x_4^{9}x_5^{30}$, \\
${\bf GR}_{49}/ x_1^{2}x_2^{3}x_3^{5}x_4^{14}x_5^{25}$, & ${\bf GR}_{50}/ x_1^{2}x_2^{3}x_3^{5}x_4^{15}x_5^{24}$, & ${\bf GR}_{51}/ x_1^{2}x_2^{3}x_3^{7}x_4^{8}x_5^{29}$, \\
${\bf GR}_{52}/ x_1^{2}x_2^{3}x_3^{7}x_4^{9}x_5^{28}$, & ${\bf GR}_{53}/ x_1^{2}x_2^{3}x_3^{7}x_4^{12}x_5^{25}$, & ${\bf GR}_{54}/ x_1^{2}x_2^{3}x_3^{7}x_4^{13}x_5^{24}$, \\
${\bf GR}_{55}/ x_1^{2}x_2^{3}x_3^{8}x_4^{13}x_5^{23}$, & ${\bf GR}_{56}/ x_1^{2}x_2^{3}x_3^{8}x_4^{15}x_5^{21}$, & ${\bf GR}_{57}/ x_1^{2}x_2^{3}x_3^{11}x_4^{13}x_5^{20}$, \\
${\bf GR}_{58}/ x_1^{2}x_2^{3}x_3^{12}x_4^{13}x_5^{19}$, & ${\bf GR}_{59}/ x_1^{2}x_2^{3}x_3^{13}x_4^{14}x_5^{17}$, & ${\bf GR}_{60}/ x_1^{2}x_2^{3}x_3^{13}x_4^{15}x_5^{16}$, \\
${\bf GR}_{61}/ x_1^{2}x_2^{4}x_3^{11}x_4^{15}x_5^{17}$, & ${\bf GR}_{62}/ x_1^{2}x_2^{5}x_3^{7}x_4^{8}x_5^{27}$, & ${\bf GR}_{63}/ x_1^{2}x_2^{5}x_3^{7}x_4^{9}x_5^{26}$, \\
${\bf GR}_{64}/ x_1^{2}x_2^{5}x_3^{7}x_4^{10}x_5^{25}$, & ${\bf GR}_{65}/ x_1^{2}x_2^{5}x_3^{7}x_4^{11}x_5^{24}$, & ${\bf GR}_{66}/ x_1^{2}x_2^{5}x_3^{8}x_4^{15}x_5^{19}$, \\
${\bf GR}_{67}/ x_1^{2}x_2^{5}x_3^{10}x_4^{15}x_5^{17}$, & ${\bf GR}_{68}/ x_1^{2}x_2^{7}x_3^{7}x_4^{8}x_5^{25}$, & ${\bf GR}_{69}/ x_1^{2}x_2^{7}x_3^{7}x_4^{9}x_5^{24}$, \\
${\bf GR}_{70}/ x_1^{2}x_2^{7}x_3^{8}x_4^{11}x_5^{21}$, & ${\bf GR}_{71}/ x_1^{2}x_2^{7}x_3^{8}x_4^{15}x_5^{17}$, & ${\bf GR}_{72}/ x_1^{2}x_2^{7}x_3^{9}x_4^{10}x_5^{21}$, \\
${\bf GR}_{73}/ x_1^{2}x_2^{7}x_3^{11}x_4^{13}x_5^{16}$, & ${\bf GR}_{74}/ x_1^{3}x_2^{3}x_3^{3}x_4^{12}x_5^{28}$, & ${\bf GR}_{75}/ x_1^{3}x_2^{3}x_3^{4}x_4^{9}x_5^{30}$, \\
${\bf GR}_{76}/ x_1^{3}x_2^{3}x_3^{4}x_4^{10}x_5^{29}$, & ${\bf GR}_{77}/ x_1^{3}x_2^{3}x_3^{4}x_4^{11}x_5^{28}$, & ${\bf GR}_{78}/ x_1^{3}x_2^{3}x_3^{4}x_4^{12}x_5^{27}$, \\
${\bf GR}_{79}/ x_1^{3}x_2^{3}x_3^{4}x_4^{13}x_5^{26}$, & ${\bf GR}_{80}/ x_1^{3}x_2^{3}x_3^{4}x_4^{14}x_5^{25}$, & ${\bf GR}_{81}/ x_1^{3}x_2^{3}x_3^{5}x_4^{8}x_5^{30}$, \\
${\bf GR}_{82}/ x_1^{3}x_2^{3}x_3^{5}x_4^{10}x_5^{28}$, & ${\bf GR}_{83}/ x_1^{3}x_2^{3}x_3^{5}x_4^{12}x_5^{26}$, & ${\bf GR}_{84}/ x_1^{3}x_2^{3}x_3^{5}x_4^{14}x_5^{24}$, \\
${\bf GR}_{85}/ x_1^{3}x_2^{3}x_3^{6}x_4^{8}x_5^{29}$, & ${\bf GR}_{86}/ x_1^{3}x_2^{3}x_3^{6}x_4^{13}x_5^{24}$, & ${\bf GR}_{87}/ x_1^{3}x_2^{3}x_3^{7}x_4^{8}x_5^{28}$, \\
${\bf GR}_{88}/ x_1^{3}x_2^{3}x_3^{7}x_4^{12}x_5^{24}$, & ${\bf GR}_{89}/ x_1^{3}x_2^{3}x_3^{8}x_4^{12}x_5^{23}$, & ${\bf GR}_{90}/ x_1^{3}x_2^{3}x_3^{8}x_4^{13}x_5^{22}$, \\
${\bf GR}_{91}/ x_1^{3}x_2^{3}x_3^{8}x_4^{15}x_5^{20}$, & ${\bf GR}_{92}/ x_1^{3}x_2^{3}x_3^{10}x_4^{12}x_5^{21}$, & ${\bf GR}_{93}/ x_1^{3}x_2^{3}x_3^{11}x_4^{12}x_5^{20}$, \\
${\bf GR}_{94}/ x_1^{3}x_2^{3}x_3^{12}x_4^{12}x_5^{19}$, & ${\bf GR}_{95}/ x_1^{3}x_2^{3}x_3^{12}x_4^{15}x_5^{16}$, & ${\bf GR}_{96}/ x_1^{3}x_2^{3}x_3^{13}x_4^{14}x_5^{16}$, \\
${\bf GR}_{97}/ x_1^{3}x_2^{4}x_3^{5}x_4^{10}x_5^{27}$, & ${\bf GR}_{98}/ x_1^{3}x_2^{4}x_3^{5}x_4^{11}x_5^{26}$, & ${\bf GR}_{99}/ x_1^{3}x_2^{4}x_3^{7}x_4^{8}x_5^{27}$, \\
${\bf GR}_{100}/ x_1^{3}x_2^{4}x_3^{7}x_4^{9}x_5^{26}$, & ${\bf GR}_{101}/ x_1^{3}x_2^{4}x_3^{7}x_4^{10}x_5^{25}$, & ${\bf GR}_{102}/ x_1^{3}x_2^{4}x_3^{7}x_4^{11}x_5^{24}$, \\
${\bf GR}_{103}/ x_1^{3}x_2^{4}x_3^{8}x_4^{15}x_5^{19}$, & ${\bf GR}_{104}/ x_1^{3}x_2^{4}x_3^{10}x_4^{11}x_5^{21}$, & ${\bf GR}_{105}/ x_1^{3}x_2^{4}x_3^{10}x_4^{13}x_5^{19}$, \\
${\bf GR}_{106}/ x_1^{3}x_2^{4}x_3^{10}x_4^{15}x_5^{17}$, & ${\bf GR}_{107}/ x_1^{3}x_2^{4}x_3^{11}x_4^{13}x_5^{18}$, & ${\bf GR}_{108}/ x_1^{3}x_2^{4}x_3^{11}x_4^{14}x_5^{17}$, \\
${\bf GR}_{109}/ x_1^{3}x_2^{5}x_3^{6}x_4^{8}x_5^{27}$, & ${\bf GR}_{110}/ x_1^{3}x_2^{5}x_3^{6}x_4^{9}x_5^{26}$, & ${\bf GR}_{111}/ x_1^{3}x_2^{5}x_3^{6}x_4^{10}x_5^{25}$, \\
${\bf GR}_{112}/ x_1^{3}x_2^{5}x_3^{6}x_4^{11}x_5^{24}$, & ${\bf GR}_{113}/ x_1^{3}x_2^{5}x_3^{7}x_4^{8}x_5^{26}$, & ${\bf GR}_{114}/ x_1^{3}x_2^{5}x_3^{7}x_4^{10}x_5^{24}$, \\
${\bf GR}_{115}/ x_1^{3}x_2^{5}x_3^{8}x_4^{10}x_5^{23}$, & ${\bf GR}_{116}/ x_1^{3}x_2^{5}x_3^{8}x_4^{11}x_5^{22}$, & ${\bf GR}_{117}/ x_1^{3}x_2^{5}x_3^{8}x_4^{14}x_5^{19}$, \\
${\bf GR}_{118}/ x_1^{3}x_2^{5}x_3^{8}x_4^{15}x_5^{18}$, & ${\bf GR}_{119}/ x_1^{3}x_2^{5}x_3^{9}x_4^{10}x_5^{22}$, & ${\bf GR}_{120}/ x_1^{3}x_2^{5}x_3^{9}x_4^{14}x_5^{18}$, \\
${\bf GR}_{121}/ x_1^{3}x_2^{5}x_3^{10}x_4^{11}x_5^{20}$, & ${\bf GR}_{122}/ x_1^{3}x_2^{5}x_3^{10}x_4^{12}x_5^{19}$, & ${\bf GR}_{123}/ x_1^{3}x_2^{5}x_3^{10}x_4^{13}x_5^{18}$, \\
${\bf GR}_{124}/ x_1^{3}x_2^{5}x_3^{10}x_4^{14}x_5^{17}$, & ${\bf GR}_{125}/ x_1^{3}x_2^{5}x_3^{10}x_4^{15}x_5^{16}$, & ${\bf GR}_{126}/ x_1^{3}x_2^{5}x_3^{11}x_4^{12}x_5^{18}$, \\
${\bf GR}_{127}/ x_1^{3}x_2^{5}x_3^{11}x_4^{14}x_5^{16}$, & ${\bf GR}_{128}/ x_1^{3}x_2^{6}x_3^{7}x_4^{8}x_5^{25}$, & ${\bf GR}_{129}/ x_1^{3}x_2^{6}x_3^{7}x_4^{9}x_5^{24}$, \\
\end{tabular}%
\end{center}

\newpage
\begin{center}
\begin{tabular}{lll}
${\bf GR}_{130}/ x_1^{3}x_2^{6}x_3^{8}x_4^{13}x_5^{19}$, & ${\bf GR}_{131}/ x_1^{3}x_2^{6}x_3^{8}x_4^{15}x_5^{17}$, & ${\bf GR}_{132}/ x_1^{3}x_2^{6}x_3^{10}x_4^{13}x_5^{17}$, \\
${\bf GR}_{133}/ x_1^{3}x_2^{6}x_3^{11}x_4^{13}x_5^{16}$, & ${\bf GR}_{134}/ x_1^{3}x_2^{7}x_3^{7}x_4^{8}x_5^{24}$, & ${\bf GR}_{135}/ x_1^{3}x_2^{7}x_3^{8}x_4^{9}x_5^{22}$, \\
${\bf GR}_{136}/ x_1^{3}x_2^{7}x_3^{8}x_4^{10}x_5^{21}$, & ${\bf GR}_{137}/ x_1^{3}x_2^{7}x_3^{8}x_4^{11}x_5^{20}$, & ${\bf GR}_{138}/ x_1^{3}x_2^{7}x_3^{8}x_4^{13}x_5^{18}$, \\
${\bf GR}_{139}/ x_1^{3}x_2^{7}x_3^{8}x_4^{14}x_5^{17}$, & ${\bf GR}_{140}/ x_1^{3}x_2^{7}x_3^{9}x_4^{10}x_5^{20}$, & ${\bf GR}_{141}/ x_1^{3}x_2^{7}x_3^{9}x_4^{12}x_5^{18}$, \\
${\bf GR}_{142}/ x_1^{3}x_2^{7}x_3^{9}x_4^{14}x_5^{16}$, & ${\bf GR}_{143}/ x_1^{3}x_2^{7}x_3^{10}x_4^{12}x_5^{17}$, & ${\bf GR}_{144}/ x_1^{3}x_2^{7}x_3^{10}x_4^{13}x_5^{16}$, \\
${\bf GR}_{145}/ x_1^{3}x_2^{7}x_3^{11}x_4^{12}x_5^{16}$, & ${\bf GR}_{146}/ x_1^{4}x_2^{7}x_3^{8}x_4^{11}x_5^{19}$, & ${\bf GR}_{147}/ x_1^{4}x_2^{7}x_3^{9}x_4^{10}x_5^{19}$, \\
${\bf GR}_{148}/ x_1^{4}x_2^{7}x_3^{10}x_4^{11}x_5^{17}$, & ${\bf GR}_{149}/ x_1^{5}x_2^{7}x_3^{8}x_4^{10}x_5^{19}$, & ${\bf GR}_{150}/ x_1^{5}x_2^{7}x_3^{8}x_4^{11}x_5^{18}$, \\
${\bf GR}_{151}/ x_1^{5}x_2^{7}x_3^{9}x_4^{10}x_5^{18}$, & ${\bf GR}_{152}/ x_1^{5}x_2^{7}x_3^{10}x_4^{11}x_5^{16}$, & ${\bf GR}_{153}/ x_1^{6}x_2^{7}x_3^{8}x_4^{9}x_5^{19}$, \\
${\bf GR}_{154}/ x_1^{6}x_2^{7}x_3^{8}x_4^{11}x_5^{17}$, & ${\bf GR}_{155}/ x_1^{7}x_2^{7}x_3^{8}x_4^{8}x_5^{19}$, & ${\bf GR}_{156}/ x_1^{7}x_2^{7}x_3^{8}x_4^{9}x_5^{18}$, \\
${\bf GR}_{157}/ x_1^{7}x_2^{7}x_3^{8}x_4^{10}x_5^{17}$, & ${\bf GR}_{158}/ x_1^{7}x_2^{7}x_3^{8}x_4^{11}x_5^{16}$, & ${\bf GR}_{159}/ x_1^{7}x_2^{7}x_3^{9}x_4^{10}x_5^{16}.$
\end{tabular}%
\end{center}

In our analysis of each group, we will employ the Cartan formula for direct calculations and proceed to eliminate various strictly inadmissible monomials. To provide clarity, we shall describe this procedure in detail for the following ten monomial group forms: ${\bf GR}_{8}$, ${\bf GR}_{25}$, ${\bf GR}_{33}$, ${\bf GR}_{42}$, ${\bf GR}_{44}$, ${\bf GR}_{45}$, ${\bf GR}_{63}$, ${\bf GR}_{72}$, ${\bf GR}_{124}$, and ${\bf GR}_{128}.$ Similar analyses are employed for the remaining monomial group forms. Readers should be aware that all calculations presented herein are characterized by their technical complexity.

$\bullet$ The monomial group form ${\bf GR}_{8}$ encompasses three monomials: $x_1x_2^{14}x_3^{15}x_4^{17}x_5^{2},$ $x_1x_2^{15}x_3^{14}x_4^{17}x_5^{2}$, and $x_1^{15}x_2x_3^{14}x_4^{17}x_5^{2}$. These monomials share the property of strict inadmissibility. To substantiate this claim, we will examine $x_1^{15}x_2x_3^{14}x_4^{17}x_5^{2}$ in detail, with the understanding that the proof similarly applies to the other two. Based on the Cartan formula and calculating directly, we obtain:
$$ \begin{array}{ll}
\medskip
x_1^{15}x_2x_3^{14}x_4^{17}x_5^{2}&= Sq^1\bigg(x_1^{15}x_2^2x_3^{13}x_4^{15}x_5^{3}+x_1^{16}x_2x_3^{13}x_4^{15}x_5^{3}+x_1^{15}x_2^2x_3^{11}x_4^{17}x_5^{3}\\
\medskip
&\quad\quad\quad+x_1^{15}x_2^2x_3^{11}x_4^{15}x_5^{5}+x_1^{18}x_2x_3^{11}x_4^{15}x_5^{3}\bigg)\\
\medskip
&\quad + Sq^2\bigg(x_1^{15}x_2x_3^{14}x_4^{15}x_5^{2}+x_1^{15}x_2x_3^{13}x_4^{15}x_5^{3}+x_1^{15}x_2x_3^{11}x_4^{17}x_5^{3}\\
\medskip
&\quad\quad\quad+x_1^{15}x_2x_3^{11}x_4^{15}x_5^{5}+x_1^{17}x_2x_3^{11}x_4^{15}x_5^{3}+x_1^{15}x_2x_3^{10}x_4^{15}x_5^{6}\bigg)\\  
\medskip
&\quad+Sq^4(x_1^{15}x_2x_3^{12}x_4^{15}x_5^{2}+x_1^{15}x_2x_3^{6}x_4^{15}x_5^{8})\\
\medskip
&\quad +Sq^8\bigg(x_1^9x_2x_3^{14}x_4^{15}x_5^{2}+x_1^{11}x_2x_3^{12}x_4^{15}x_5^{2}+x_1^{8}x_2x_3^{14}x_4^{15}x_5^{3}+x_1^{9}x_2^2x_3^{11}x_4^{15}x_5^{4}\\
\medskip
&\quad\quad\quad+x_1^{9}x_2^2x_3^{12}x_4^{15}x_5^{3}+x_1^{9}x_2x_3^{10}x_4^{15}x_5^{6}+x_1^{11}x_2x_3^{6}x_4^{15}x_5^{8}\bigg)\\
\medskip
&\quad+ x_1^9x_2x_3^{22}x_4^{15}x_5^{2} + x_1^9x_2x_3^{14}x_4^{23}x_5^{2}+x_1^{11}x_2x_3^{20}x_4^{15}x_5^{2}\\
\medskip
&\quad+x_1^{11}x_2x_3^{12}x_4^{23}x_5^{2}+x_1^{15}x_2x_3^{12}x_4^{19}x_5^{2}+x_1^{15}x_2x_3^{14}x_4^{16}x_5^{3}\\
\medskip
&\quad+x_1^{15}x_2x_3^{11}x_4^{18}x_5^{4}+x_1^{15}x_2x_3^{12}x_4^{18}x_5^{3}+x_1^{15}x_2x_3^{14}x_4^{15}x_5^{2}\\
\end{array}$$
\newpage
$$ \begin{array}{ll}
\medskip
&\quad+x_1^{15}x_2x_3^{11}x_4^{16}x_5^{6}+x_1^{8}x_2x_3^{22}x_4^{15}x_5^{3}+x_1^{8}x_2x_3^{14}x_4^{23}x_5^{3}\\
\medskip
&\quad+x_1^{9}x_2^2x_3^{19}x_4^{15}x_5^{4}+x_1^{9}x_2^2x_3^{11}x_4^{23}x_5^{4}+x_1^{9}x_2^2x_3^{20}x_4^{15}x_5^{3}\\
\medskip
&\quad+x_1^{9}x_2^2x_3^{12}x_4^{23}x_5^{3}+x_1^{15}x_2x_3^{10}x_4^{17}x_5^{6}+x_1^{9}x_2x_3^{18}x_4^{15}x_5^{6}\\
\medskip
&\quad+x_1^{9}x_2x_3^{10}x_4^{23}x_5^{6}+x_1^{15}x_2x_3^{14}x_4^{15}x_5^{2}+x_1^{15}x_2x_3^{6}x_4^{19}x_5^{8}\\
&\quad+x_1^{11}x_2x_3^{6}x_4^{23}x_5^{8}+x_1^{11}x_2x_3^{6}x_4^{15}x_5^{16}\ \ {\rm mod}\ (\mathcal P_5^-(\omega_{(1)})).
\end{array}
$$

$\bullet$ There are three monomials $x_1x_2^{15}x_3^{19}x_4^{4}x_5^{10},$ $x_1^{15}x_2x_3^{19}x_4^{4}x_5^{10}$, and $x_1^{15}x_2^{19}x_3x_4^{4}x_5^{10}$ in the monomial group form ${\bf GR}_{25}.$ These monomials are found to be strictly inadmissible. We will demonstrate this property starting with the monomial $x_1^{15}x_2^{19}x_3x_4^{4}x_5^{10}$, and the proof for the other two follows similarly. Applying the Cartan formula directly yields:
$$ \begin{array}{ll}
\medskip
x_1^{15}x_2^{19}x_3x_4^{4}x_5^{10}&=  Sq^1(x_1^{15}x_2^{15}x_3x_4^{5}x_5^{12} + x_1^{15}x_2^{15}x_3^4x_4^{3}x_5^{11} + x_1^{15}x_2^{15}x_3x_4^{4}x_5^{13})\\
\medskip
&\quad +Sq^2(x_1^{15}x_2^{15}x_3x_4^{6}x_5^{10} + x_1^{15}x_2^{15}x_3^2x_4^{3}x_5^{12}+x_1^{15}x_2^{15}x_3^2x_4^{4}x_5^{11}+x_1^{15}x_2^{15}x_3x_4^{2}x_5^{14})\\
\medskip
&\quad+Sq^4(x_1^{15}x_2^{15}x_3x_4^{4}x_5^{10}) + Sq^8\bigg(x_1^{11}x_2^{15}x_3x_4^{4}x_5^{10} + x_1^{9}x_2^{15}x_3x_4^{6}x_5^{10} \\
\medskip
&\quad\quad\quad + x_1^{9}x_2^{15}x_3^2x_4^{3}x_5^{12}+x_1^{8}x_2^{15}x_3^4x_4^{3}x_5^{11}+x_1^{9}x_2^{15}x_3^2x_4^{4}x_5^{11}+x_1^{9}x_2^{15}x_3x_4^{2}x_5^{14}\bigg)\\
\medskip
&\quad+ x_1^{9}x_2^{23}x_3^2x_4^{4}x_5^{11} + x_1^{9}x_2^{15}x_3^2x_4^{4}x_5^{19}  + x_1^{15}x_2^{17}x_3x_4^{2}x_5^{14}\\
\medskip
&\quad + x_1^{15}x_2^{15}x_3x_4^{2}x_5^{16} + x_1^{9}x_2^{23}x_3x_4^{2}x_5^{14} + x_1^{9}x_2^{15}x_3x_4^{2}x_5^{22}\\
\medskip
&\quad+ x_1^{11}x_2^{23}x_3x_4^{4}x_5^{10} + x_1^{11}x_2^{15}x_3x_4^{4}x_5^{18} +x_1^{9}x_2^{23}x_3x_4^{6}x_5^{10}\\
\medskip
&\quad + x_1^{9}x_2^{15}x_3x_4^{6}x_5^{18}  + x_1^{15}x_2^{17}x_3x_4^{6}x_5^{10} + x_1^{15}x_2^{17}x_3^2x_4^{3}x_5^{12}\\
\medskip
&\quad + x_1^{9}x_2^{23}x_3^2x_4^{3}x_5^{12} + x_1^{9}x_2^{15}x_3^2x_4^{3}x_5^{20}  +x_1^{15}x_2^{16}x_3^4x_4^{3}x_5^{11}\\
&\quad + x_1^{8}x_2^{23}x_3^4x_4^{3}x_5^{11} + x_1^{8}x_2^{15}x_3^4x_4^{3}x_5^{19}  +x_1^{15}x_2^{17}x_3^2x_4^{4}x_5^{11}  \ \ {\rm mod}\ (\mathcal P_5^-(\omega_{(1)})).
\end{array}$$

$\bullet$ In the examination of the monomial group form ${\bf GR}_{33}$, we identify three key monomials: $x_1x_2^{6}x_3^{15}x_4^{17}x_5^{10},$ $x_1x_2^{15}x_3^{6}x_4^{17}x_5^{10}$, and $x_1^{15}x_2x_3^{6}x_4^{17}x_5^{10}$. Our analysis shows that these monomials are strictly inadmissible, a fact easily verified. Our proof of this property begins with the monomial $x_1x_2^{6}x_3^{15}x_4^{17}x_5^{10}$, and extends to the other two monomials in a similar manner. Direct application of the Cartan formula leads us to:
$$ \begin{array}{ll}
\medskip
x_1x_2^{6}x_3^{15}x_4^{17}x_5^{10}&= Sq^1(x_1^2x_2^{5}x_3^{15}x_4^{15}x_5^{11} + x_1^2x_2^{3}x_3^{15}x_4^{17}x_5^{11} + x_1^2x_2^{3}x_3^{17}x_4^{15}x_5^{11} + x_1^2x_2^{3}x_3^{15}x_4^{15}x_5^{13}) \\
\medskip
&\quad + Sq^2\bigg(x_1x_2^{6}x_3^{15}x_4^{15}x_5^{10} +x_1x_2^{5}x_3^{15}x_4^{15}x_5^{11}+x_1x_2^{3}x_3^{15}x_4^{17}x_5^{11}\\
\medskip
&\quad\quad\quad + x_1x_2^{3}x_3^{17}x_4^{15}x_5^{11} + x_1x_2^{3}x_3^{15}x_4^{15}x_5^{13} + x_1x_2^{2}x_3^{15}x_4^{15}x_5^{14}\bigg) \\
\medskip
&\quad+ Sq^4(x_1x_2^{4}x_3^{15}x_4^{15}x_5^{10}) + Sq^8(x_1x_2^{6}x_3^{9}x_4^{15}x_5^{10}+x_1x_2^{6}x_3^{8}x_4^{15}x_5^{11})\\ 
\medskip
&\quad + x_1x_2^{4}x_3^{15}x_4^{19}x_5^{10} + x_1x_2^{4}x_3^{19}x_4^{15}x_5^{10} + x_1x_2^{6}x_3^{9}x_4^{23}x_5^{10}\\
\medskip
&\quad + x_1x_2^{6}x_3^{9}x_4^{15}x_5^{18}  +x_1x_2^{6}x_3^{15}x_4^{16}x_5^{11} + x_1x_2^{6}x_3^{8}x_4^{23}x_5^{11}\\ 
\end{array}$$
\newpage
$$ \begin{array}{ll}
\medskip
&\quad + x_1x_2^{6}x_3^{8}x_4^{15}x_5^{19}+ x_1x_2^{3}x_3^{15}x_4^{18}x_5^{12} + x_1x_2^{3}x_3^{18}x_4^{15}x_5^{12}\\
\medskip
&\quad  + x_1x_2^{4}x_3^{15}x_4^{18}x_5^{11} + x_1x_2^{4}x_3^{18}x_4^{15}x_5^{11} + x_1x_2^{3}x_3^{15}x_4^{16}x_5^{14}\\
\medskip
&\quad + x_1x_2^{3}x_3^{16}x_4^{15}x_5^{14} + x_1x_2^{2}x_3^{15}x_4^{17}x_5^{14} + x_1x_2^{2}x_3^{17}x_4^{15}x_5^{14}\\
&\quad  + x_1x_2^{2}x_3^{15}x_4^{15}x_5^{16} \ \ {\rm mod}\ (\mathcal P_5^-(\omega_{(1)})).
\end{array}
$$

$\bullet$ Examining the monomial group form ${\bf GR}_{42}$, we observe that it comprises only one monomial, $x_1^7x_2^{11}x_3^{20}x_4x_5^{10}$, which is characterized by strict inadmissibility. Direct calculations substantiate this claim, resulting in the following:
$$ \begin{array}{ll}
\medskip
x_1^7x_2^{11}x_3^{20}x_4x_5^{10}&=  Sq^1\bigg(x_1^7x_2^{11}x_3^{20}x_4x_5^{9} + x_1^7x_2^{11}x_3^{19}x_4^4x_5^{7}+x_1^7x_2^{9}x_3^{21}x_4^4x_5^{7}+x_1^7x_2^{13}x_3^{20}x_4x_5^{7}\\
\medskip
&\quad\quad\quad +x_1^7x_2^{14}x_3^{17}x_4x_5^{9}+x_1^7x_2^{13}x_3^{15}x_4x_5^{12}+x_1^7x_2^{16}x_3^{15}x_4x_5^{9}+x_1^7x_2^{13}x_3^{15}x_4^4x_5^{9}\bigg)\\
\medskip
&\quad  + Sq^2\bigg(x_1^7x_2^{11}x_3^{20}x_4^2x_5^{7}+x_1^7x_2^{10}x_3^{19}x_4^4x_5^{7}+x_1^7x_2^{9}x_3^{22}x_4^2x_5^{7}\\
\medskip
&\quad\quad\quad+x_1^7x_2^{14}x_3^{18}x_4x_5^{7}+x_1^7x_2^{14}x_3^{15}x_4x_5^{10}+x_1^7x_2^{11}x_3^{15}x_4^2x_5^{12}\\
\medskip
&\quad\quad\quad+x_1^7x_2^{14}x_3^{15}x_4^2x_5^{9}+x_1^7x_2^{11}x_3^{15}x_4^4x_5^{10}+x_1^3x_2^{11}x_3^{15}x_4^8x_5^{10}\bigg)\\ 
\medskip
&\quad+ Sq^4\bigg(x_1^5x_2^{11}x_3^{20}x_4^2x_5^{7}+x_1^4x_2^{11}x_3^{19}x_4^4x_5^{7}+x_1^5x_2^{10}x_3^{19}x_4^4x_5^{7}\\
\medskip
&\quad\quad\quad+x_1^5x_2^{9}x_3^{22}x_4^2x_5^{7}+x_1^5x_2^{14}x_3^{18}x_4x_5^{7}+x_1^5x_2^{14}x_3^{15}x_4x_5^{10}\\
\medskip
&\quad\quad\quad+x_1^5x_2^{11}x_3^{15}x_4^2x_5^{12}+x_1^5x_2^{14}x_3^{15}x_4^2x_5^{9}+x_1^5x_2^{11}x_3^{15}x_4^4x_5^{10}\\
\medskip
&\quad\quad\quad+x_1^3x_2^{13}x_3^{15}x_4^4x_5^{10}+x_1^3x_2^{7}x_3^{15}x_4^8x_5^{12}\bigg)\\
\medskip
&\quad + x_1^5x_2^{11}x_3^{24}x_4^2x_5^{7}  +x_1^5x_2^{11}x_3^{20}x_4^2x_5^{11}  +x_1^7x_2^{11}x_3^{19}x_4^4x_5^{8}\\
\medskip
&\quad\quad\quad + x_1^4x_2^{11}x_3^{19}x_4^8x_5^{7} + x_1^4x_2^{11}x_3^{19}x_4^4x_5^{11} +x_1^7x_2^{10}x_3^{19}x_4^4x_5^{9}\\
\medskip
&\quad\quad\quad + x_1^5x_2^{10}x_3^{19}x_4^8x_5^{7} +x_1^5x_2^{10}x_3^{19}x_4^4x_5^{11} +x_1^5x_2^{9}x_3^{26}x_4^2x_5^{7}\\
\medskip
&\quad\quad\quad + x_1^5x_2^{9}x_3^{22}x_4^2x_5^{11}+x_1^7x_2^{9}x_3^{24}x_4^2x_5^{7} +x_1^7x_2^{9}x_3^{22}x_4^2x_5^{9}\\
\medskip
&\quad\quad\quad+x_1^5x_2^{14}x_3^{18}x_4x_5^{11}+x_1^5x_2^{15}x_3^{18}x_4x_5^{10} + x_1^5x_2^{14}x_3^{19}x_4x_5^{10}\\
\medskip
&\quad\quad\quad+x_1^7x_2^{11}x_3^{17}x_4^2x_5^{12} + x_1^5x_2^{11}x_3^{19}x_4^2x_5^{12} + x_1^5x_2^{11}x_3^{15}x_4^2x_5^{16}\\
\medskip
&\quad\quad\quad+x_1^5x_2^{18}x_3^{15}x_4^2x_5^{9} + x_1^5x_2^{14}x_3^{19}x_4^2x_5^{9}+x_1^7x_2^{11}x_3^{17}x_4^4x_5^{10}\\
\medskip
&\quad\quad\quad+x_1^5x_2^{11}x_3^{19}x_4^4x_5^{10}+x_1^3x_2^{17}x_3^{15}x_4^4x_5^{10}  +x_1^3x_2^{13}x_3^{19}x_4^4x_5^{10}\\
&\quad\quad\quad + x_1^7x_2^{11}x_3^{20}x_4x_5^{10}+x_1^3x_2^{7}x_3^{19}x_4^8x_5^{12} + x_1^3x_2^{7}x_3^{15}x_4^8x_5^{16} \ \ {\rm mod}\ (\mathcal P_5^-(\omega_{(1)})).
 
\end{array}$$

$\bullet$ The following 16 monomials are contained in the monomial group form ${\bf GR}_{44}$:

\begin{center}
\begin{tabular}{llll}
$x_1x_2^{7}x_3^{10}x_4^{15}x_5^{16}$, & $x_1^{7}x_2x_3^{10}x_4^{15}x_5^{16}$, & $x_1^{7}x_2^{15}x_3^{16}x_4x_5^{10}$, & $x_1^{15}x_2^{7}x_3^{16}x_4x_5^{10}$, \\
$x_1x_2^{7}x_3^{15}x_4^{10}x_5^{16}$, & $x_1x_2^{7}x_3^{15}x_4^{16}x_5^{10}$, & $x_1x_2^{15}x_3^{7}x_4^{10}x_5^{16}$, & $x_1x_2^{15}x_3^{7}x_4^{16}x_5^{10}$, \\
$x_1^{7}x_2x_3^{15}x_4^{10}x_5^{16}$, & $x_1^{7}x_2x_3^{15}x_4^{16}x_5^{10}$, & $x_1^{7}x_2^{15}x_3x_4^{10}x_5^{16}$, & $x_1^{7}x_2^{15}x_3x_4^{16}x_5^{10}$, \\
$x_1^{15}x_2x_3^{7}x_4^{10}x_5^{16}$, & $x_1^{15}x_2x_3^{7}x_4^{16}x_5^{10}$, & $x_1^{15}x_2^{7}x_3x_4^{10}x_5^{16}$, & $x_1^{15}x_2^{7}x_3x_4^{16}x_5^{10}.$
\end{tabular}%

\end{center}

Among these monomials, 8 are strictly inadmissible, namely:

\begin{center}
\begin{tabular}{llll}
$x_1^{7}x_2^{15}x_3^{16}x_4x_5^{10}$, & $x_1^{15}x_2^{7}x_3^{16}x_4x_5^{10}$, & $x_1x_2^{7}x_3^{15}x_4^{16}x_5^{10}$, & $x_1x_2^{15}x_3^{7}x_4^{16}x_5^{10}$, \\
$x_1^{7}x_2x_3^{15}x_4^{16}x_5^{10}$, & $x_1^{7}x_2^{15}x_3x_4^{16}x_5^{10}$, & $x_1^{15}x_2x_3^{7}x_4^{16}x_5^{10}$, & $x_1^{15}x_2^{7}x_3x_4^{16}x_5^{10}.$
\end{tabular}%
\end{center}

We will proceed to show that $x_1^{7}x_2^{15}x_3^{16}x_4x_5^{10}$ is strictly inadmissible, and this proof technique generalizes to the remaining monomials. Indeed, a step-by-step calculation demonstrates:
$$ \begin{array}{ll}
\medskip
x_1^{7}x_2^{15}x_3^{16}x_4x_5^{10}&= Sq^1(x_1^{7}x_2^{15}x_3^{13}x_4x_5^{12} + x_1^{7}x_2^{15}x_3^{11}x_4^4x_5^{11} + x_1^{7}x_2^{15}x_3^{9}x_4^4x_5^{13})\\ 
\medskip
&\quad +  Sq^2\bigg(x_1^{7}x_2^{15}x_3^{14}x_4x_5^{10} + x_1^{7}x_2^{15}x_3^{11}x_4^2x_5^{12} + x_1^{2}x_2^{15}x_3^{11}x_4^8x_5^{11}\\
\medskip
&\quad\quad\quad + x_1^{7}x_2^{15}x_3^{10}x_4^4x_5^{11} + x_1^{3}x_2^{15}x_3^{10}x_4^8x_5^{11}+x_1^{7}x_2^{15}x_3^{9}x_4^2x_5^{14}\bigg)\\
\medskip
&\quad + Sq^4\bigg(x_1^{5}x_2^{15}x_3^{14}x_4x_5^{10}  +x_1^{5}x_2^{15}x_3^{11}x_4^2x_5^{12} + x_1^{4}x_2^{15}x_3^{11}x_4^4x_5^{11}\\
\medskip
&\quad\quad\quad  + x_1^{2}x_2^{15}x_3^{13}x_4^4x_5^{11} + x_1^{2}x_2^{15}x_3^{7}x_4^8x_5^{13} + x_1^{5}x_2^{15}x_3^{10}x_4^4x_5^{11}\\
\medskip
&\quad\quad\quad  + x_1^{3}x_2^{15}x_3^{12}x_4^4x_5^{11} + x_1^{3}x_2^{15}x_3^{6}x_4^8x_5^{13} + x_1^{5}x_2^{15}x_3^{9}x_4^2x_5^{14}\bigg) \\
\medskip
&\quad + Sq^8\bigg(x_1^{7}x_2^{9}x_3^{14}x_4x_5^{10} + x_1^{7}x_2^{9}x_3^{11}x_4^2x_5^{12} + x_1^{7}x_2^{8}x_3^{11}x_4^4x_5^{11} \\
\medskip
&\quad\quad\quad+ x_1^{7}x_2^{9}x_3^{10}x_4^4x_5^{11}+x_1^{7}x_2^{9}x_3^{9}x_4^2x_5^{14}\bigg)\\ 
\medskip
&\quad + x_1^{7}x_2^{9}x_3^{22}x_4x_5^{10} + x_1^{7}x_2^{9}x_3^{14}x_4x_5^{18} + x_1^{5}x_2^{19}x_3^{14}x_4x_5^{10} \\
\medskip
&\quad  +x_1^{5}x_2^{15}x_3^{18}x_4x_5^{10} + x_1^{5}x_2^{15}x_3^{16}x_4x_5^{10} + x_1^{5}x_2^{19}x_3^{11}x_4^2x_5^{12} \\
\medskip
&\quad + x_1^{5}x_2^{15}x_3^{11}x_4^2x_5^{16}  +x_1^{7}x_2^{9}x_3^{19}x_4^2x_5^{12} + x_1^{7}x_2^{9}x_3^{11}x_4^2x_5^{20}\\
\medskip
&\quad  + x_1^{4}x_2^{19}x_3^{11}x_4^4x_5^{11} + x_1^{2}x_2^{19}x_3^{13}x_4^4x_5^{11} + x_1^{2}x_2^{15}x_3^{17}x_4^4x_5^{11} \\
\medskip
&\quad + x_1^{2}x_2^{19}x_3^{7}x_4^8x_5^{13}  +x_1^{2}x_2^{15}x_3^{7}x_4^8x_5^{17} + x_1^{7}x_2^{8}x_3^{19}x_4^4x_5^{11}\\
\medskip
&\quad  + x_1^{7}x_2^{8}x_3^{11}x_4^4x_5^{19} + x_1^{7}x_2^{9}x_3^{18}x_4^4x_5^{11}  +x_1^{7}x_2^{9}x_3^{10}x_4^4x_5^{19}\\
\medskip
&\quad  + x_1^{5}x_2^{19}x_3^{10}x_4^4x_5^{11} +  x_1^{3}x_2^{19}x_3^{12}x_4^4x_5^{11} + x_1^{3}x_2^{15}x_3^{16}x_4^4x_5^{11} \\
\medskip
&\quad + x_1^{3}x_2^{19}x_3^{6}x_4^8x_5^{13}+x_1^{3}x_2^{15}x_3^{6}x_4^8x_5^{17} + x_1^{7}x_2^{15}x_3^{9}x_4^2x_5^{16}\\
\medskip
&\quad  + x_1^{7}x_2^{9}x_3^{17}x_4^2x_5^{14} + x_1^{7}x_2^{9}x_3^{9}x_4^2x_5^{22} + x_1^{5}x_2^{19}x_3^{9}x_4^2x_5^{14}\\
&\quad + x_1^{5}x_2^{15}x_3^{9}x_4^2x_5^{18}\ \ {\rm mod}\ (\mathcal P_5^-(\omega_{(1)})).
\end{array}$$

$\bullet$ The monomial group form ${\bf GR}_{45}$ encompasses the following 9 monomials:

\begin{center}

\begin{tabular}{lll}
$x_1x_2^{14}x_3^{7}x_4^{16}x_5^{11}$, & $x_1x_2^{14}x_3^{7}x_4^{11}x_5^{16}$, & $x_1x_2^{7}x_3^{14}x_4^{16}x_5^{11}$, \\
$x_1^{7}x_2x_3^{14}x_4^{16}x_5^{11}$, & $x_1x_2^{7}x_3^{14}x_4^{11}x_5^{16}$, & $x_1^{7}x_2x_3^{14}x_4^{11}x_5^{16}$, \\
$x_1x_2^{7}x_3^{11}x_4^{14}x_5^{16}$, & $x_1^{7}x_2x_3^{11}x_4^{14}x_5^{16}$, & $x_1^{7}x_2^{11}x_3x_4^{14}x_5^{16}.$
\end{tabular}%

\end{center}

Of these, 6 monomials are classified as strictly inadmissible:

\begin{center}
\begin{tabular}{lll}
$x_1x_2^{14}x_3^{7}x_4^{16}x_5^{11}$, & $x_1x_2^{14}x_3^{7}x_4^{11}x_5^{16}$, & $x_1x_2^{7}x_3^{14}x_4^{16}x_5^{11}$, \\
$x_1^{7}x_2x_3^{14}x_4^{16}x_5^{11}$, & $x_1x_2^{7}x_3^{14}x_4^{11}x_5^{16}$, & $x_1^{7}x_2x_3^{14}x_4^{11}x_5^{16}.$
\end{tabular}%
\end{center}

Checking the accuracy of this result is not particularly daunting.

$\bullet$ Upon inspection of ${\bf GR}_{63}$, we find it contains only one monomial: $x_1^7x_2^{9}x_3^{2}x_4^5x_5^{26}$. This monomial is strictly inadmissible, a property confirmed through explicit calculations as follows:
$$ \begin{array}{ll}
\medskip
x_1^7x_2^{9}x_3^{2}x_4^5x_5^{26}&= Sq^1\bigg(x_1^7x_2^{7}x_3^{4}x_4^9x_5^{21}+x_1^7x_2^{7}x_3^{4}x_4^{11}x_5^{19}+x_1^7x_2^{7}x_3^{3}x_4^7x_5^{24}\\
\medskip
&\quad\quad\quad  +x_1^7x_2^{7}x_3x_4^{13}x_5^{20}+x_1^7x_2^{7}x_3^{8}x_4^{11}x_5^{15}\bigg)\\
\medskip
&\quad+ Sq^2\bigg(x_1^7x_2^{7}x_3^{2}x_4^9x_5^{22}+x_1^7x_2^{7}x_3^{4}x_4^{10}x_5^{19}+x_1^7x_2^{7}x_3^{2}x_4^{11}x_5^{20}\\
\medskip
&\quad\quad\quad  +x_1^7x_2^{7}x_3^{2}x_4^7x_5^{24}+x_1^7x_2^{7}x_3x_4^{14}x_5^{18}+x_1^7x_2^{3}x_3^{4}x_4^{18}x_5^{15}\\
\medskip
&\quad\quad\quad  +x_1^7x_2^{3}x_3^{4}x_4^{10}x_5^{23}+x_1^7x_2^{7}x_3^{8}x_4^{10}x_5^{15}+x_1^3x_2^{11}x_3^{8}x_4^{10}x_5^{15}\\
\medskip
&\quad\quad\quad  +x_1^2x_2^{11}x_3^{8}x_4^{11}x_5^{15}+x_1^7x_2^{2}x_3^{4}x_4^{19}x_5^{15}+x_1^7x_2^{2}x_3^{4}x_4^{11}x_5^{23}\bigg)\\
\medskip
&\quad + Sq^4\bigg(x_1^{11}x_2^{5}x_3^{2}x_4^5x_5^{22}+x_1^5x_2^{7}x_3^{2}x_4^9x_5^{22}+x_1^5x_2^{7}x_3^{4}x_4^{10}x_5^{19}\\
\medskip
&\quad\quad\quad  +x_1^4x_2^{7}x_3^{4}x_4^{11}x_5^{19}+x_1^5x_2^{7}x_3^{2}x_4^{11}x_5^{20}+x_1^{11}x_2^{5}x_3^{2}x_4^7x_5^{20}\\
\medskip
&\quad\quad\quad  +x_1^5x_2^{7}x_3^{2}x_4^7x_5^{24}+x_1^4x_2^{7}x_3^{3}x_4^7x_5^{24}+x_1^5x_2^{7}x_3x_4^{14}x_5^{18}\\
\medskip
&\quad\quad\quad  +x_1^{11}x_2^{5}x_3^{4}x_4^{10}x_5^{15}+x_1^{11}x_2^{3}x_3^{4}x_4^{12}x_5^{15}+x_1^5x_2^{3}x_3^{4}x_4^{18}x_5^{15}\\
\medskip
&\quad\quad\quad  +x_1^5x_2^{3}x_3^{4}x_4^{10}x_5^{23}+x_1^5x_2^{7}x_3^{8}x_4^{10}x_5^{15}+x_1^3x_2^{13}x_3^{4}x_4^{10}x_5^{15}\\
\medskip
&\quad\quad\quad  +x_1^3x_2^{7}x_3^{8}x_4^{12}x_5^{15}+x_1^4x_2^{7}x_3^{8}x_4^{11}x_5^{15}+x_1^2x_2^{13}x_3^{4}x_4^{11}x_5^{15}\\
\medskip
&\quad\quad\quad  +x_1^2x_2^{7}x_3^{8}x_4^{13}x_5^{15}+x_1^{11}x_2^{4}x_3^{4}x_4^{11}x_5^{15}+x_1^5x_2^{2}x_3^{4}x_4^{11}x_5^{23}\\
\medskip
&\quad\quad\quad  +x_1^5x_2^{2}x_3^{4}x_4^{19}x_5^{15}+x_1^{11}x_2^{2}x_3^{4}x_4^{13}x_5^{15}\bigg)\\
\medskip
&\quad+Sq^8\bigg(x_1^7x_2^{5}x_3^{2}x_4^5x_5^{22}+x_1^7x_2^{5}x_3^{2}x_4^7x_5^{20}+x_1^7x_2^{5}x_3^{4}x_4^{10}x_5^{15}\\
\medskip
&\quad\quad\quad  +x_1^7x_2^{3}x_3^{4}x_4^{12}x_5^{15}+x_1^7x_2^{4}x_3^{4}x_4^{11}x_5^{15}+x_1^7x_2^{2}x_3^{4}x_4^{13}x_5^{15}\bigg)\\
\medskip
&\quad + x_1^7x_2^{5}x_3^{2}x_4^9x_5^{26}+x_1^5x_2^{11}x_3^{2}x_4^9x_5^{22} + x_1^5x_2^{7}x_3^{2}x_4^9x_5^{26}\\
\medskip
&\quad\quad\quad  +x_1^7x_2^{7}x_3^{2}x_4^9x_5^{24}+x_1^5x_2^{11}x_3^{4}x_4^{10}x_5^{19} + x_1^5x_2^{7}x_3^{8}x_4^{10}x_5^{19}\\
\medskip
&\quad\quad\quad  +x_1^7x_2^{8}x_3^{4}x_4^{11}x_5^{19}+x_1^4x_2^{11}x_3^{4}x_4^{11}x_5^{19}  +x_1^4x_2^{7}x_3^{8}x_4^{11}x_5^{19}\\
\medskip
&\quad\quad\quad  +x_1^5x_2^{11}x_3^{2}x_4^{11}x_5^{20}  +x_1^5x_2^{7}x_3^{2}x_4^{11}x_5^{24}+x_1^7x_2^{5}x_3^{2}x_4^{11}x_5^{24}\\
\medskip
&\quad\quad\quad  +x_1^7x_2^{7}x_3^{2}x_4^9x_5^{24}+x_1^5x_2^{11}x_3^{2}x_4^7x_5^{24} +x_1^5x_2^{7}x_3^{2}x_4^{11}x_5^{24}\\
\medskip
&\quad\quad\quad  +x_1^4x_2^{11}x_3^{3}x_4^7x_5^{24}+x_1^4x_2^{7}x_3^{3}x_4^{11}x_5^{24}+x_1^7x_2^{8}x_3^{3}x_4^7x_5^{24}\\
\medskip
&\quad\quad\quad  +x_1^7x_2^{7}x_3^{3}x_4^8x_5^{24}+x_1^7x_2^{9}x_3x_4^{14}x_5^{18}+x_1^5x_2^{11}x_3x_4^{14}x_5^{18}\\
\medskip
&\quad\quad\quad  +x_1^7x_2^{3}x_3^{8}x_4^{16}x_5^{15}+x_1^7x_2^{3}x_3^{8}x_4^{12}x_5^{19}+x_1^7x_2^{5}x_3^{8}x_4^{10}x_5^{19}\\
\medskip
&\quad\quad\quad  +x_1^7x_2^{3}x_3^{4}x_4^{10}x_5^{25}+x_1^5x_2^{3}x_3^{8}x_4^{18}x_5^{15} + x_1^5x_2^{3}x_3^{8}x_4^{10}x_5^{23}\\
\end{array}$$
\newpage
$$ \begin{array}{ll}
\medskip
&\quad\quad\quad  +x_1^5x_2^{3}x_3^{4}x_4^{10}x_5^{27}+x_1^7x_2^{7}x_3^{8}x_4^{10}x_5^{17}+x_1^5x_2^{7}x_3^{8}x_4^{10}x_5^{19}\\
\medskip
&\quad\quad\quad  +x_1^3x_2^{17}x_3^{4}x_4^{10}x_5^{15}+x_1^3x_2^{13}x_3^{4}x_4^{10}x_5^{19}+x_1^3x_2^{7}x_3^{8}x_4^{16}x_5^{15}\\
\medskip
&\quad\quad\quad  +x_1^3x_2^{7}x_3^{8}x_4^{12}x_5^{19}+x_1^7x_2^{7}x_3^{8}x_4^{11}x_5^{16}+x_1^4x_2^{7}x_3^{8}x_4^{11}x_5^{19}\\
\medskip
&\quad\quad\quad  +x_1^2x_2^{17}x_3^{4}x_4^{11}x_5^{15}+x_1^2x_2^{13}x_3^{4}x_4^{11}x_5^{19}+x_1^2x_2^{7}x_3^{8}x_4^{17}x_5^{15}\\
\medskip
&\quad\quad\quad  +x_1^2x_2^{7}x_3^{8}x_4^{13}x_5^{19}+x_1^7x_2^{8}x_3^{4}x_4^{11}x_5^{19}+x_1^7x_2^{4}x_3^{8}x_4^{11}x_5^{19}\\
\medskip
&\quad\quad\quad  +x_1^7x_2^{2}x_3^{4}x_4^{11}x_5^{25}+x_1^5x_2^{2}x_3^{8}x_4^{11}x_5^{23}+x_1^5x_2^{2}x_3^{4}x_4^{11}x_5^{27}\\
&\quad\quad\quad  +x_1^5x_2^{2}x_3^{8}x_4^{19}x_5^{15}+x_1^7x_2^{2}x_3^{8}x_4^{17}x_5^{15}+x_1^7x_2^{2}x_3^{8}x_4^{13}x_5^{19}\ \ {\rm mod}\ (\mathcal P_5^-(\omega_{(1)})).
\end{array}
$$

$\bullet$ Upon inspection of the monomial group form ${\bf GR}_{72},$ we find it contains only one element: $x_1^7x_2^{9}x_3^{2}x_4^{21}x_5^{10}.$ This monomial is characterized by strict inadmissibility. Corroborating this result necessitates a series of highly intricate calculations. Specifically, our use of the Cartan formula shows:
$$ \begin{array}{ll}
\medskip
x_1^7x_2^{9}x_3^{2}x_4^{21}x_5^{10}&= Sq^1\bigg(x_1^7x_2^{7}x_3^{4}x_4^{25}x_5^{5}+x_1^7x_2^{7}x_3^{8}x_4^{21}x_5^{5}+x_1^7x_2^{7}x_3^{8}x_4^{15}x_5^{11}+x_1^7x_2^{5}x_3^{8}x_4^{15}x_5^{13}\\
\medskip
&\quad\quad\quad +x_1^7x_2^{7}x_3^{4}x_4^{27}x_5^{3}+x_1^7x_2^{7}x_3x_4^{29}x_5^{4}+x_1^7x_2^{7}x_3^{4}x_4^{23}x_5^{7}\bigg)\\
\medskip
&\quad+Sq^2\bigg(x_1^7x_2^{7}x_3^{2}x_4^{25}x_5^{6}+x_1^7x_2^{7}x_3^{4}x_4^{26}x_5^{3}+x_1^7x_2^{7}x_3^{8}x_4^{22}x_5^{3}\\
\medskip
&\quad\quad\quad+x_1^7x_2^{7}x_3^{8}x_4^{19}x_5^{6}+x_1^3x_2^{11}x_3^{8}x_4^{15}x_5^{10}+x_1^7x_2^{3}x_3^{16}x_4^{15}x_5^{6}\\
\medskip
&\quad\quad\quad+x_1^7x_2^{3}x_3^{8}x_4^{23}x_5^{6}+x_1^2x_2^{11}x_3^{8}x_4^{15}x_5^{11}+x_1^7x_2^{6}x_3^{8}x_4^{15}x_5^{11}\\
\medskip
&\quad\quad\quad+x_1^3x_2^{10}x_3^{8}x_4^{15}x_5^{11}+x_1^7x_2^{3}x_3^{8}x_4^{15}x_5^{14}+x_1^7x_2^{7}x_3^{2}x_4^{27}x_5^{4}\\
\medskip
&\quad\quad\quad+x_1^7x_2^{7}x_3x_4^{30}x_5^{2}+x_1^7x_2^{7}x_3^{2}x_4^{23}x_5^{8}+x_1^8x_2^{7}x_3^{2}x_4^{23}x_5^{7}\\
\medskip
&\quad\quad\quad+x_1^7x_2^{8}x_3^{2}x_4^{23}x_5^{7}+x_1^7x_2^{7}x_3^{2}x_4^{24}x_5^{7}\bigg)\\
\medskip
&\quad+Sq^4\bigg(x_1^{11}x_2^{5}x_3^{2}x_4^{21}x_5^{6}+x_1^5x_2^{7}x_3^{2}x_4^{25}x_5^{6}+x_1^5x_2^{7}x_3^{4}x_4^{26}x_5^{3}\\
\medskip
&\quad\quad\quad+x_1^{11}x_2^{5}x_3^{4}x_4^{22}x_5^{3}+x_1^5x_2^{7}x_3^{8}x_4^{22}x_5^{3}+x_1^5x_2^{7}x_3^{8}x_4^{19}x_5^{6}\\
\medskip
&\quad\quad\quad+x_1^{11}x_2^{5}x_3^{8}x_4^{15}x_5^{6}+x_1^5x_2^{7}x_3^{8}x_4^{15}x_5^{10}+x_1^3x_2^{13}x_3^{4}x_4^{15}x_5^{10}\\
\medskip
&\quad\quad\quad+x_1^3x_2^{7}x_3^{8}x_4^{15}x_5^{12}+x_1^5x_2^{3}x_3^{16}x_4^{15}x_5^{6}+x_1^5x_2^{3}x_3^{8}x_4^{23}x_5^{6}\\
\medskip
&\quad\quad\quad+x_1^4x_2^{7}x_3^{8}x_4^{15}x_5^{11}+x_1^2x_2^{13}x_3^{4}x_4^{15}x_5^{11}+x_1^2x_2^{7}x_3^{8}x_4^{15}x_5^{13}\\
\medskip
&\quad\quad\quad+x_1^5x_2^{6}x_3^{8}x_4^{15}x_5^{11}+x_1^3x_2^{12}x_3^{4}x_4^{15}x_5^{11}+x_1^3x_2^{6}x_3^{8}x_4^{15}x_5^{13}\\
\medskip
&\quad\quad\quad+x_1^5x_2^{3}x_3^{8}x_4^{15}x_5^{14}+x_1^4x_2^{7}x_3^{4}x_4^{27}x_5^{3}+x_1^5x_2^{7}x_3^{2}x_4^{27}x_5^{4}\\
\medskip
&\quad\quad\quad+x_1^5x_2^{7}x_3x_4^{30}x_5^{2}+x_1^{11}x_2^{5}x_3^{2}x_4^{23}x_5^{4}+x_1^5x_2^{7}x_3^{2}x_4^{23}x_5^{8}\\
\medskip
&\quad\quad\quad+x_1^5x_2^{8}x_3^{2}x_4^{23}x_5^{7}+x_1^4x_2^{9}x_3^{2}x_4^{23}x_5^{7}+x_1^4x_2^{7}x_3^{2}x_4^{25}x_5^{7}\\
\medskip
&\quad\quad\quad+x_1^4x_2^{7}x_3^{2}x_4^{23}x_5^{9}+x_1^5x_2^{7}x_3^{2}x_4^{24}x_5^{7}+x_1^{11}x_2^{5}x_3^{2}x_4^{20}x_5^{7}\bigg)\\
\end{array}$$
\newpage
$$ \begin{array}{ll}
\medskip
&\quad+Sq^8\bigg(x_1^7x_2^{5}x_3^{2}x_4^{21}x_5^{6}+x_1^7x_2^{5}x_3^{4}x_4^{22}x_5^{3}+x_1^7x_2^{5}x_3^{8}x_4^{15}x_5^{6}\\
\medskip
&\quad\quad\quad+x_1^7x_2^{5}x_3^{2}x_4^{23}x_5^{4}+x_1^7x_2^{5}x_3^{2}x_4^{20}x_5^{7}\bigg)\\
\medskip
&\quad+x_1^7x_2^{5}x_3^{2}x_4^{25}x_5^{10}+x_1^5x_2^{11}x_3^{2}x_4^{25}x_5^{6}+x_1^5x_2^{7}x_3^{2}x_4^{25}x_5^{10}\\
\medskip
&\quad\quad\quad+x_1^7x_2^{7}x_3^{2}x_4^{25}x_5^{8}+x_1^5x_2^{11}x_3^{4}x_4^{26}x_5^{3}+x_1^5x_2^{7}x_3^{8}x_4^{26}x_5^{3}\\
\medskip
&\quad\quad\quad+x_1^7x_2^{5}x_3^{8}x_4^{26}x_5^{3}+x_1^5x_2^{11}x_3^{8}x_4^{22}x_5^{3}+x_1^5x_2^{7}x_3^{8}x_4^{26}x_5^{3}\\
\medskip
&\quad\quad\quad+x_1^7x_2^{7}x_3^{8}x_4^{24}x_5^{3}+x_1^5x_2^{11}x_3^{8}x_4^{19}x_5^{6}+x_1^5x_2^{7}x_3^{8}x_4^{19}x_5^{10}\\
\medskip
&\quad\quad\quad+x_1^7x_2^{7}x_3^{8}x_4^{19}x_5^{8}+x_1^7x_2^{5}x_3^{8}x_4^{19}x_5^{10}+x_1^7x_2^{7}x_3^{8}x_4^{17}x_5^{10}\\
\medskip
&\quad\quad\quad+x_1^5x_2^{7}x_3^{8}x_4^{19}x_5^{10}+x_1^3x_2^{17}x_3^{4}x_4^{15}x_5^{10}+x_1^3x_2^{13}x_3^{4}x_4^{19}x_5^{10}\\
\medskip
&\quad\quad\quad+x_1^3x_2^{7}x_3^{8}x_4^{19}x_5^{12}+x_1^3x_2^{7}x_3^{8}x_4^{15}x_5^{16}+x_1^7x_2^{3}x_3^{16}x_4^{15}x_5^{8}\\
\medskip
&\quad\quad\quad+x_1^5x_2^{3}x_3^{16}x_4^{15}x_5^{10}+x_1^7x_2^{3}x_3^{8}x_4^{25}x_5^{6}+x_1^7x_2^{3}x_3^{8}x_4^{23}x_5^{8}\\
\medskip
&\quad\quad\quad+x_1^5x_2^{3}x_3^{8}x_4^{27}x_5^{6}+x_1^5x_2^{3}x_3^{8}x_4^{23}x_5^{10}+x_1^7x_2^{7}x_3^{8}x_4^{16}x_5^{11}\\
\medskip
&\quad\quad\quad+x_1^4x_2^{7}x_3^{8}x_4^{19}x_5^{11}+x_1^2x_2^{17}x_3^{4}x_4^{15}x_5^{11}+x_1^2x_2^{13}x_3^{4}x_4^{19}x_5^{11}\\
\medskip
&\quad\quad\quad+x_1^2x_2^{7}x_3^{8}x_4^{19}x_5^{13}+x_1^2x_2^{7}x_3^{8}x_4^{15}x_5^{17}+x_1^7x_2^{6}x_3^{8}x_4^{17}x_5^{11}\\
\medskip
&\quad\quad\quad+x_1^5x_2^{6}x_3^{8}x_4^{19}x_5^{11}+x_1^3x_2^{16}x_3^{4}x_4^{15}x_5^{11}+x_1^3x_2^{12}x_3^{4}x_4^{19}x_5^{11}\\
\medskip
&\quad\quad\quad+x_1^3x_2^{6}x_3^{8}x_4^{19}x_5^{13}+x_1^3x_2^{6}x_3^{8}x_4^{15}x_5^{17}+x_1^7x_2^{3}x_3^{8}x_4^{17}x_5^{14}\\
\medskip
&\quad\quad\quad+x_1^7x_2^{3}x_3^{8}x_4^{15}x_5^{16}+x_1^5x_2^{3}x_3^{8}x_4^{19}x_5^{14}+x_1^5x_2^{3}x_3^{8}x_4^{15}x_5^{18}\\
\medskip
&\quad\quad\quad+x_1^7x_2^{8}x_3^{4}x_4^{27}x_5^{3}+x_1^4x_2^{11}x_3^{4}x_4^{27}x_5^{3}+x_1^4x_2^{7}x_3^{8}x_4^{27}x_5^{3}\\
\medskip
&\quad\quad\quad+x_1^5x_2^{11}x_3^{2}x_4^{27}x_5^{4}+x_1^5x_2^{7}x_3^{2}x_4^{27}x_5^{8}+x_1^5x_2^{11}x_3x_4^{30}x_5^{2}\\
\medskip
&\quad\quad\quad+x_1^7x_2^{9}x_3x_4^{30}x_5^{2}+x_1^7x_2^{5}x_3^{2}x_4^{27}x_5^{8}+x_1^5x_2^{11}x_3^{2}x_4^{23}x_5^{8}\\
\medskip
&\quad\quad\quad+x_1^5x_2^{7}x_3^{2}x_4^{27}x_5^{8}+x_1^7x_2^{7}x_3^{2}x_4^{25}x_5^{8}+x_1^7x_2^{8}x_3^{2}x_4^{25}x_5^{7}\\
\medskip
&\quad\quad\quad+x_1^7x_2^{8}x_3^{2}x_4^{23}x_5^{9}+x_1^5x_2^{8}x_3^{2}x_4^{27}x_5^{7}+x_1^5x_2^{8}x_3^{2}x_4^{23}x_5^{11}\\
\medskip
&\quad\quad\quad+x_1^4x_2^{9}x_3^{2}x_4^{27}x_5^{7}+x_1^4x_2^{9}x_3^{2}x_4^{23}x_5^{11}+x_1^4x_2^{11}x_3^{2}x_4^{25}x_5^{7}\\
\medskip
&\quad\quad\quad+x_1^4x_2^{7}x_3^{2}x_4^{25}x_5^{11}+x_1^4x_2^{11}x_3^{2}x_4^{23}x_5^{9}+x_1^4x_2^{7}x_3^{2}x_4^{27}x_5^{9}\\
\medskip
&\quad\quad\quad+x_1^7x_2^{7}x_3^{2}x_4^{24}x_5^{9}+x_1^5x_2^{11}x_3^{2}x_4^{24}x_5^{7}+x_1^5x_2^{7}x_3^{2}x_4^{24}x_5^{11}\\
&\quad\quad\quad+x_1^7x_2^{9}x_3^{2}x_4^{20}x_5^{11}+x_1^7x_2^{5}x_3^{2}x_4^{24}x_5^{11}\ \ {\rm mod}\ (\mathcal P_5^-(\omega_{(1)})).
\end{array}
$$

$\bullet$ The monomial group form ${\bf GR}_{124}$ contains two monomials of particular interest: $x_1^{3}x_2^{5}x_3^{10}x_4^{17}x_5^{14}$ and $x_1^{3}x_2^{5}x_3^{14}x_4^{17}x_5^{10}$. A simple computation shows that $x_1^{3}x_2^{5}x_3^{10}x_4^{17}x_5^{14}$ is strictly inadmissible. We can confirm this by employing the Cartan formula, which gives us:
$$ \begin{array}{ll}
\medskip
x_1^{3}x_2^{5}x_3^{10}x_4^{17}x_5^{14} &=  Sq^1(x_1^{3}x_2^{3}x_3^{9}x_4^{19}x_5^{14} + x_1^{3}x_2^{3}x_3^{5}x_4^{19}x_5^{18} + x_1^{3}x_2^{3}x_3^{5}x_4^{11}x_5^{26} + x_1^{3}x_2^{3}x_3^{9}x_4^{11}x_5^{22})\\
\medskip
&\quad + Sq^2\bigg(x_1^{5}x_2^{3}x_3^{6}x_4^{19}x_5^{14}+ x_1^{5}x_2^{3}x_3^{6}x_4^{11}x_5^{22}+x_1^{2}x_2^{3}x_3^{9}x_4^{19}x_5^{14} \\
\medskip
&\quad\quad\quad + x_1^{2}x_2^{3}x_3^{9}x_4^{11}x_5^{22} + x_1^{2}x_2^{3}x_3^{5}x_4^{11}x_5^{26}\bigg)\\
\end{array}$$
\newpage
$$ \begin{array}{ll}
\medskip
&\quad + Sq^4(x_1^{3}x_2^{9}x_3^{6}x_4^{13}x_5^{14} + x_1^{3}x_2^{3}x_3^{6}x_4^{19}x_5^{14} + x_1^{3}x_2^{3}x_3^{6}x_4^{11}x_5^{22}) \\
\medskip
&\quad + Sq^8(x_1^{3}x_2^{5}x_3^{6}x_4^{13}x_5^{14}) + x_1^{3}x_2^{5}x_3^{10}x_4^{13}x_5^{18}+ x_1^{3}x_2^{5}x_3^{8}x_4^{19}x_5^{14}\\
\medskip
&\quad   + x_1^{3}x_2^{3}x_3^{8}x_4^{21}x_5^{14} + x_1^{3}x_2^{5}x_3^{8}x_4^{11}x_5^{22} + x_1^{3}x_2^{5}x_3^{6}x_4^{11}x_5^{24}\\
\medskip
&\quad + x_1^{3}x_2^{3}x_3^{8}x_4^{13}x_5^{22} + x_1^{3}x_2^{3}x_3^{6}x_4^{13}x_5^{24}+  x_1^{3}x_2^{4}x_3^{9}x_4^{19}x_5^{14}\\ 
\medskip
&\quad + x_1^{3}x_2^{3}x_3^{9}x_4^{20}x_5^{14} + x_1^{3}x_2^{4}x_3^{9}x_4^{11}x_5^{22} + x_1^{3}x_2^{3}x_3^{9}x_4^{12}x_5^{22}\\
\medskip
&\quad + x_1^{3}x_2^{4}x_3^{5}x_4^{11}x_5^{26} + x_1^{3}x_2^{3}x_3^{5}x_4^{12}x_5^{26}+ x_1^{2}x_2^{5}x_3^{9}x_4^{19}x_5^{14}\\
\medskip
&\quad  + x_1^{2}x_2^{3}x_3^{9}x_4^{21}x_5^{14} + x_1^{2}x_2^{5}x_3^{9}x_4^{11}x_5^{22} + x_1^{2}x_2^{3}x_3^{9}x_4^{13}x_5^{22}\\
&\quad + x_1^{2}x_2^{5}x_3^{5}x_4^{11}x_5^{26} + x_1^{2}x_2^{3}x_3^{5}x_4^{13}x_5^{26} + x_1^{2}x_2^{3}x_3^{5}x_4^{11}x_5^{28} \ \ {\rm mod}\ (\mathcal P_5^-(\omega_{(1)})).
\end{array}$$

$\bullet$ For the monomial group form ${\bf GR}_{128},$ we find three monomials of interest: $x_1^{3}x_2^{7}x_3^{25}x_4^{6}x_5^{8}$, $x_1^{7}x_2^{3}x_3^{25}x_4^{6}x_5^{8}$, and $x_1^{7}x_2^{25}x_3^{3}x_4^{6}x_5^{8}$. Notably, the monomial $x_1^{7}x_2^{25}x_3^{3}x_4^{6}x_5^{8}$ can be shown to be strictly inadmissible. This is achievable through a direct computation process that utilizes the Cartan formula. Indeed, we obtain:
$$ \begin{array}{ll}
x_1^{7}x_2^{25}x_3^{3}x_4^{6}x_5^{8} &=   Sq^1(x_1^{7}x_2^{15}x_3^{7}x_4^{11}x_5^{8}+x_1^{7}x_2^{15}x_3x_4^{21}x_5^{4} + x_1^{7}x_2^{23}x_3x_4^{13}x_5^{4}) \\
&\quad +Sq^2\bigg(x_1^{7}x_2^{23}x_3^{3}x_4^{6}x_5^{8}+x_1^{7}x_2^{15}x_3^{7}x_4^{10}x_5^{8} + x_1^{7}x_2^{15}x_3^{3}x_4^{6}x_5^{16}\\
&\quad\quad\quad+x_1^{3}x_2^{15}x_3^{11}x_4^{10}x_5^{8}+x_1^{2}x_2^{15}x_3^{11}x_4^{11}x_5^{8}+x_1^{7}x_2^{23}x_3^{2}x_4^{11}x_5^{4}\\
&\quad\quad\quad + x_1^{7}x_2^{15}x_3^{2}x_4^{19}x_5^{4}+x_1^{7}x_2^{15}x_3x_4^{22}x_5^{2}+x_1^{7}x_2^{23}x_3x_4^{14}x_5^{2}\bigg)\\ 
\medskip
&+Sq^4\bigg(x_1^{5}x_2^{23}x_3^{3}x_4^{6}x_5^{8}+x_1^{5}x_2^{15}x_3^{3}x_4^{6}x_5^{16} + x_1^{5}x_2^{15}x_3^{7}x_4^{10}x_5^{8}\\
\medskip
&\quad\quad\quad+x_1^{11}x_2^{15}x_3^{5}x_4^{6}x_5^{8}+x_1^{3}x_2^{15}x_3^{13}x_4^{6}x_5^{8} + x_1^{3}x_2^{15}x_3^{7}x_4^{12}x_5^{8}\\
\medskip
&\quad\quad\quad+x_1^{4}x_2^{15}x_3^{7}x_4^{11}x_5^{8}+x_1^{2}x_2^{15}x_3^{13}x_4^{7}x_5^{8} + x_1^{2}x_2^{15}x_3^{7}x_4^{13}x_5^{8}\\
\medskip
&\quad\quad\quad+x_1^{11}x_2^{15}x_3^{4}x_4^{11}x_5^{4}+x_1^{5}x_2^{15}x_3^{2}x_4^{19}x_5^{4} + x_1^{5}x_2^{23}x_3^{2}x_4^{11}x_5^{4}\\
\medskip
&\quad\quad\quad+x_1^{5}x_2^{15}x_3x_4^{22}x_5^{2}+x_1^{5}x_2^{23}x_3x_4^{14}x_5^{2}\bigg)\\
\medskip
&\quad+ Sq^8(x_1^{7}x_2^{15}x_3^{5}x_4^{6}x_5^{8}+x_1^{7}x_2^{15}x_3^{4}x_4^{11}x_5^{4})\\
\medskip
 &\quad + x_1^{5}x_2^{27}x_3^{3}x_4^{6}x_5^{8} + x_1^{5}x_2^{23}x_3^{3}x_4^{10}x_5^{8} + x_1^{7}x_2^{23}x_3^{3}x_4^{8}x_5^{8}\\
\medskip
 &\quad+ x_1^{7}x_2^{19}x_3^{9}x_4^{6}x_5^{8} + x_1^{7}x_2^{19}x_3^{5}x_4^{10}x_5^{8}+ x_1^{7}x_2^{15}x_3^{3}x_4^{8}x_5^{16} \\
\medskip
 &\quad+ x_1^{5}x_2^{15}x_3^{3}x_4^{10}x_5^{16} +x_1^{5}x_2^{19}x_3^{7}x_4^{10}x_5^{8}+ x_1^{3}x_2^{19}x_3^{13}x_4^{6}x_5^{8}\\
\medskip
 &\quad + x_1^{3}x_2^{15}x_3^{17}x_4^{6}x_5^{8} + x_1^{3}x_2^{19}x_3^{7}x_4^{12}x_5^{8} + x_1^{3}x_2^{15}x_3^{7}x_4^{16}x_5^{8}\\
\medskip
 &\quad + x_1^{7}x_2^{17}x_3^{7}x_4^{10}x_5^{8}+ x_1^{7}x_2^{16}x_3^{7}x_4^{11}x_5^{8} +x_1^{4}x_2^{19}x_3^{7}x_4^{11}x_5^{8} \\
\medskip
   &\quad+ x_1^{2}x_2^{19}x_3^{7}x_4^{13}x_5^{8}+x_1^{2}x_2^{19}x_3^{13}x_4^{7}x_5^{8} + x_1^{2}x_2^{15}x_3^{7}x_4^{17}x_5^{8} \\
\medskip
&\quad + x_1^{2}x_2^{15}x_3^{17}x_4^{7}x_5^{8} + x_1^{7}x_2^{19}x_3^{8}x_4^{11}x_5^{4} + x_1^{7}x_2^{19}x_3^{4}x_4^{11}x_5^{8} \\ 
\medskip
&\quad + x_1^{5}x_2^{15}x_3^{2}x_4^{19}x_5^{8}+ x_1^{5}x_2^{23}x_3^{2}x_4^{11}x_5^{8} + x_1^{5}x_2^{27}x_3^{2}x_4^{11}x_5^{4} \\
\medskip
&\quad + x_1^{7}x_2^{25}x_3^{2}x_4^{11}x_5^{4}  + x_1^{7}x_2^{15}x_3x_4^{24}x_5^{2}+x_1^{5}x_2^{15}x_3x_4^{26}x_5^{2}\\
&\quad  +x_1^{7}x_2^{25}x_3x_4^{14}x_5^{2}  +x_1^{5}x_2^{27}x_3x_4^{14}x_5^{2}\ \ {\rm mod}\ (\mathcal P_5^-(\omega_{(1)})).
\end{array}$$

With the computational process described above, we eliminated 267 strictly inadmissible monomials, resulting in a set $\mathscr E$ of 238 remaining monomials in $\mathcal P_5^+(\omega_{(1)}).$ This set $\mathscr E$ has no elements in common with the set $\mathscr D=\bigcup_{d}\mathscr C(d, N_1)\bigcup \widetilde{\Phi^{+}}(\mathscr C_{N_1})$ described earlier. A direct computation reveals that for any admissible monomial $z \in \mathcal P_5(3,2,2,1)$ and indices $1 \leq i < j < k \leq 5$ such that $x_ix_jx_kz^2 \neq X$ (where $X \in \mathscr D \cup \mathscr E$), one of the following holds: 

$\bullet$ $x_ix_jx_kz^2$ belongs to one of the monomial group forms ${\bf GR}_i,\, 1\leq i\leq 159;$ 

$\bullet$ There exists an inadmissible monomial $W \in \mathcal P_5(3,3,2,2)$ (cf. \cite{D.P8}) such that $x_ix_jx_kz^2 = Wu^{16}$ for some $u \in (\mathcal P^0_5)_{1}$. (It should be emphasized that the number of such monomials $W$ is 984.)

Consequently, from the above calculations and due to Theorem \ref{dlKS}, $x_ix_jx_kz^2$ is inadmissible. Based on the earlier discussion of admissibility for $x = x_ix_jx_ky^2 \in \mathcal P_5(\omega_{(1)})$ and $y \in \mathcal P_5(3,2,2,1),$ we can conclude that the set $[\mathscr D\,\cup\, \mathscr E]$ generates $(\mathcal Q\mathcal P^+_5)(\omega_{(1)}).$

Finally, direct calculations enable us to demonstrate the linear independence of the set $[\mathscr D\,\cup\, \mathscr E]$ within $(\mathcal Q\mathcal P^+_5)(\omega_{(1)})$.  Denote by $\mathscr F$ a set consisting of the following 125 monomials:

\begin{center}
\begin{tabular}{lllll}
$x_1^{3}x_2x_3^{2}x_4^{12}x_5^{31}$, & $x_1^{3}x_2x_3^{2}x_4^{31}x_5^{12}$, & $x_1^{3}x_2x_3^{31}x_4^{2}x_5^{12}$, & $x_1^{3}x_2^{31}x_3x_4^{2}x_5^{12}$, & $x_1^{31}x_2x_3x_4^{2}x_5^{14}$, \\
$x_1^{31}x_2x_3x_4^{14}x_5^{2}$, & $x_1^{31}x_2x_3^{2}x_4x_5^{14}$, & $x_1^{31}x_2x_3^{14}x_4x_5^{2}$, & $x_1^{31}x_2x_3^{2}x_4^{13}x_5^{2}$, & $x_1^{31}x_2x_3x_4^{6}x_5^{10}$, \\
$x_1^{31}x_2x_3^{6}x_4x_5^{10}$, & $x_1^{31}x_2x_3^{2}x_4^{3}x_5^{12}$, & $x_1^{31}x_2x_3^{2}x_4^{12}x_5^{3}$, & $x_1^{31}x_2x_3^{3}x_4^{2}x_5^{12}$, & $x_1^{31}x_2x_3^{3}x_4^{12}x_5^{2}$, \\
$x_1^{31}x_2x_3^{2}x_4^{4}x_5^{11}$, & $x_1^{31}x_2x_3^{2}x_4^{5}x_5^{10}$, & $x_1^{31}x_2x_3^{2}x_4^{7}x_5^{8}$, & $x_1^{31}x_2x_3^{7}x_4^{2}x_5^{8}$, & $x_1^{31}x_2x_3^{3}x_4^{4}x_5^{10}$, \\
$x_1^{31}x_2x_3^{3}x_4^{6}x_5^{8}$, & $x_1^{31}x_2^{3}x_3x_4^{2}x_5^{12}$, & $x_1^{31}x_2^{3}x_3x_4^{12}x_5^{2}$, & $x_1^{31}x_2^{3}x_3x_4^{4}x_5^{10}$, & $x_1^{31}x_2^{3}x_3x_4^{6}x_5^{8}$, \\
$x_1^{31}x_2^{3}x_3^{5}x_4^{2}x_5^{8}$, & $x_1^{31}x_2^{3}x_3^{3}x_4^{4}x_5^{8}$, & $x_1^{31}x_2^{3}x_3^{5}x_4^{8}x_5^{2}$, & $x_1^{31}x_2^{7}x_3x_4^{2}x_5^{8}$, & $x_1x_2^{3}x_3^{2}x_4^{12}x_5^{31}$, \\
$x_1x_2^{3}x_3^{2}x_4^{31}x_5^{12}$, & $x_1x_2^{3}x_3^{31}x_4^{2}x_5^{12}$, & $x_1x_2^{31}x_3x_4^{2}x_5^{14}$, & $x_1x_2^{31}x_3x_4^{14}x_5^{2}$, & $x_1x_2^{31}x_3^{2}x_4x_5^{14}$, \\
$x_1x_2^{31}x_3^{14}x_4x_5^{2}$, & $x_1x_2^{31}x_3^{2}x_4^{13}x_5^{2}$, & $x_1x_2^{31}x_3x_4^{6}x_5^{10}$, & $x_1x_2^{31}x_3^{6}x_4x_5^{10}$, & $x_1x_2^{31}x_3^{2}x_4^{3}x_5^{12}$, \\
$x_1x_2^{31}x_3^{2}x_4^{12}x_5^{3}$, & $x_1x_2^{31}x_3^{3}x_4^{2}x_5^{12}$, & $x_1x_2^{31}x_3^{3}x_4^{12}x_5^{2}$, & $x_1x_2^{31}x_3^{2}x_4^{4}x_5^{11}$, & $x_1x_2^{31}x_3^{2}x_4^{5}x_5^{10}$, \\
$x_1x_2^{31}x_3^{2}x_4^{7}x_5^{8}$, & $x_1x_2^{31}x_3^{7}x_4^{2}x_5^{8}$, & $x_1x_2^{31}x_3^{3}x_4^{4}x_5^{10}$, & $x_1x_2^{31}x_3^{3}x_4^{6}x_5^{8}$, & $x_1^{3}x_2^{31}x_3x_4^{12}x_5^{2}$, \\
$x_1^{3}x_2^{31}x_3x_4^{4}x_5^{10}$, & $x_1^{3}x_2^{31}x_3x_4^{6}x_5^{8}$, & $x_1^{3}x_2^{31}x_3^{5}x_4^{2}x_5^{8}$, & $x_1^{3}x_2^{31}x_3^{3}x_4^{4}x_5^{8}$, & $x_1^{3}x_2^{31}x_3^{5}x_4^{8}x_5^{2}$, \\
$x_1^{7}x_2^{31}x_3x_4^{2}x_5^{8}$, & $x_1x_2^{2}x_3^{3}x_4^{12}x_5^{31}$, & $x_1x_2^{2}x_3^{3}x_4^{31}x_5^{12}$, & $x_1x_2x_3^{31}x_4^{2}x_5^{14}$, & $x_1x_2x_3^{31}x_4^{14}x_5^{2}$, \\
$x_1x_2^{2}x_3^{31}x_4x_5^{14}$, & $x_1x_2^{14}x_3^{31}x_4x_5^{2}$, & $x_1x_2^{2}x_3^{31}x_4^{13}x_5^{2}$, & $x_1x_2x_3^{31}x_4^{6}x_5^{10}$, & $x_1x_2^{6}x_3^{31}x_4x_5^{10}$, \\
$x_1x_2^{2}x_3^{31}x_4^{3}x_5^{12}$, & $x_1x_2^{2}x_3^{31}x_4^{12}x_5^{3}$, & $x_1x_2^{3}x_3^{31}x_4^{12}x_5^{2}$, & $x_1x_2^{2}x_3^{31}x_4^{4}x_5^{11}$, & $x_1x_2^{2}x_3^{31}x_4^{5}x_5^{10}$, \\
$x_1x_2^{2}x_3^{31}x_4^{7}x_5^{8}$, & $x_1x_2^{7}x_3^{31}x_4^{2}x_5^{8}$, & $x_1x_2^{3}x_3^{31}x_4^{4}x_5^{10}$, & $x_1x_2^{3}x_3^{31}x_4^{6}x_5^{8}$, & $x_1^{3}x_2x_3^{31}x_4^{12}x_5^{2}$, \\
$x_1^{3}x_2x_3^{31}x_4^{4}x_5^{10}$, & $x_1^{3}x_2x_3^{31}x_4^{6}x_5^{8}$, & $x_1^{3}x_2^{5}x_3^{31}x_4^{2}x_5^{8}$, & $x_1^{3}x_2^{3}x_3^{31}x_4^{4}x_5^{8}$, & $x_1^{3}x_2^{5}x_3^{31}x_4^{8}x_5^{2}$, \\
$x_1^{7}x_2x_3^{31}x_4^{2}x_5^{8}$, & $x_1x_2^{2}x_3^{12}x_4^{31}x_5^{3}$, & $x_1x_2x_3^{2}x_4^{14}x_5^{31}$, & $x_1x_2x_3^{14}x_4^{2}x_5^{31}$, & $x_1x_2^{2}x_3x_4^{14}x_5^{31}$, \\
$x_1x_2^{14}x_3x_4^{2}x_5^{31}$, & $x_1x_2^{2}x_3^{13}x_4^{2}x_5^{31}$, & $x_1x_2x_3^{6}x_4^{10}x_5^{31}$, & $x_1x_2^{6}x_3x_4^{10}x_5^{31}$, & $x_1x_2^{2}x_3^{12}x_4^{3}x_5^{31}$, \\
$x_1x_2^{3}x_3^{12}x_4^{2}x_5^{31}$, & $x_1x_2^{2}x_3^{4}x_4^{11}x_5^{31}$, & $x_1x_2^{2}x_3^{5}x_4^{10}x_5^{31}$, & $x_1x_2^{2}x_3^{7}x_4^{8}x_5^{31}$, & $x_1x_2^{7}x_3^{2}x_4^{8}x_5^{31}$, \\
$x_1x_2^{3}x_3^{4}x_4^{10}x_5^{31}$, & $x_1x_2^{3}x_3^{6}x_4^{8}x_5^{31}$, & $x_1^{3}x_2x_3^{12}x_4^{2}x_5^{31}$, & $x_1^{3}x_2x_3^{4}x_4^{10}x_5^{31}$, & $x_1^{3}x_2x_3^{6}x_4^{8}x_5^{31}$, \\
$x_1^{3}x_2^{5}x_3^{2}x_4^{8}x_5^{31}$, & $x_1^{3}x_2^{3}x_3^{4}x_4^{8}x_5^{31}$, & $x_1^{3}x_2^{5}x_3^{8}x_4^{2}x_5^{31}$, & $x_1^{7}x_2x_3^{2}x_4^{8}x_5^{31}$, & $x_1x_2x_3^{2}x_4^{31}x_5^{14}$, \\
$x_1x_2x_3^{14}x_4^{31}x_5^{2}$, & $x_1x_2^{2}x_3x_4^{31}x_5^{14}$, & $x_1x_2^{14}x_3x_4^{31}x_5^{2}$, & $x_1x_2^{2}x_3^{13}x_4^{31}x_5^{2}$, & $x_1x_2x_3^{6}x_4^{31}x_5^{10}$, \\
$x_1x_2^{6}x_3x_4^{31}x_5^{10}$, & $x_1x_2^{3}x_3^{12}x_4^{31}x_5^{2}$, & $x_1x_2^{2}x_3^{4}x_4^{31}x_5^{11}$, & $x_1x_2^{2}x_3^{5}x_4^{31}x_5^{10}$, & $x_1x_2^{2}x_3^{7}x_4^{31}x_5^{8}$, \\
$x_1x_2^{7}x_3^{2}x_4^{31}x_5^{8}$, & $x_1x_2^{3}x_3^{4}x_4^{31}x_5^{10}$, & $x_1x_2^{3}x_3^{6}x_4^{31}x_5^{8}$, & $x_1^{3}x_2x_3^{12}x_4^{31}x_5^{2}$, & $x_1^{3}x_2x_3^{4}x_4^{31}x_5^{10}$, \\
$x_1^{3}x_2x_3^{6}x_4^{31}x_5^{8}$, & $x_1^{3}x_2^{5}x_3^{2}x_4^{31}x_5^{8}$, & $x_1^{3}x_2^{3}x_3^{4}x_4^{31}x_5^{8}$, & $x_1^{3}x_2^{5}x_3^{8}x_4^{31}x_5^{2}$, & $x_1^{7}x_2x_3^{2}x_4^{31}x_5^{8}.$
\end{tabular}%
\end{center}

It is straightforward to check that $\mathscr F$ is a subset of $\bigcup_{d}\mathscr C(d, N_1).$ Let $\mathscr F_1:= (\mathscr D\cup \mathscr E)\setminus \mathscr F.$ This implies the inclusion of every one of the 856 monomials found in $\mathscr F_1.$ Let us consider the $\mathbb F_2$-subspaces $\langle {\rm [}\mathscr F{\rm ]} \rangle\subset (\mathcal Q\mathcal P_5^+)(\omega_{(1)})$ and $\langle {\rm [}\mathscr F_1{\rm ]} \rangle\subset (\mathcal Q\mathcal P_5^+)(\omega_{(1)}).$ 
\newpage
The admissibility of all monomials in $\mathscr F$ follows from the fact shown above that $\mathscr F\subset \bigcup_{d}\mathscr C(d, N_1)$, which in turn implies $\dim \langle {\rm [}\mathscr F{\rm ]} \rangle = 125.$ Additionally, we can immediately recognize that $\langle {\rm [}\mathscr F{\rm ]} \rangle$ and $\langle {\rm [}\mathscr F_1{\rm ]} \rangle$ have no non-zero vectors in common. It therefore suffices to demonstrate that the set $\langle {\rm [}\mathscr F_1{\rm ]}\rangle$ is linearly independent within the space $(\mathcal Q\mathcal P_5^+)(\omega_{(1)}).$ Our proof methodology, analogous to that in \cite{D.P4}, utilizes a result from \cite{N.S1} on $\dim (\mathscr A\mathcal Q^{\otimes >}_4)_{N_1} = 154$, alongside an algebra homomorphism $\mathcal P_5 \longrightarrow \mathcal P_4$ defined for each $(l, \mathscr L)\in\mathcal{N}_5$ by the substitution:
$$ x_j\longmapsto \begin{cases}
x_j & \text{if } 1\leq j \leq l-1, \\
\sum\limits_{m\in \mathscr L}x_{m-1} & \text{if } j = l, \\
x_{j-1} & \text{if } l+1 \leq j \leq 5.
\end{cases} $$
\end{proof}

%A portion of the following observation (Remark \ref{nxc}) is discussed in \cite{D.P11}. 
As a final note to this section and to bolster understanding of our algorithm, we present the following remark (Remark \ref{nxc}), in which we reiterate the advantages of the aforementioned algorithm when compared with the original \texttt{MAGMA} algorithm described in \cite[Appendix A.9]{D.P7}.

\begin{rem}\label{nxc}
 In the original algorithm implemented in \texttt{MAGMA} \cite{D.P7}, when we create these spaces using the sub \scalebox{1.2}{$\langle$}\scalebox{1.2}{$\rangle$} command, it compels \texttt{MAGMA} to compute an echelon form using a dense matrix data structure. This approach consumes significant memory and computational time, especially for high-dimensional spaces.

However, given that these matrices are inherently sparse, a more efficient method exists that conserves both time and memory resources. By utilizing the sparse matrix type and computing ranks, we can implement an algorithm capable of performing both column and row operations. This approach is not constrained to using the row echelon form, resulting in substantially improved performance.

Despite these improvements, the initialization of $S1, S2,$ and subsequent spaces still required several hundred seconds. To address this, we have implemented a new version of the $Sq(j, x)$ function. This updated version employs power series that truncate higher-order terms which are not relevant to our calculations, further optimizing the process. 

In addition, we have observed that the original algorithm version \cite{D.P7} produces incorrect results in some calculations for degrees $N > 48.$ \textit{It should be emphasized that the original algorithm remains effective, correctly providing outputs for cases where the variables $s \leq 5$ and the degrees $N \leq 48$} (see also our preprint \cite{D.P3}). For instance, let's analyze the following situation: with $N = 49 = N_1 = 5(2^1 - 1) + 11.2^{1+1},$ if MAX is set to $S16,$ then 
$$\dim (\mathscr A\mathcal Q^{\otimes}_5)_{49} = \dim V - \dim H = 292825 - 289813 = 3012.$$ However, if MAX is set to $S32,$ then 
$$\dim (\mathscr A\mathcal Q^{\otimes}_5)_{49} = \dim V - \dim H = 292825 - 289772 = 3053 > 3012.$$
(Readers can straightforwardly re-verify these output results by utilizing our original algorithm in \cite{D.P7} and modifying degree $N$ from 32 to 49.)  These results underscore the flawed nature of the original algorithm's output. Indeed, \textit{theoretically, as we increase MAX to $S32,$ $S64,$ etc., the dimension of $H$ should increase, and consequently, the dimension of $(\mathscr A\mathcal Q^{\otimes}_5)_{49}$ should decrease, while the dimension of $V$ remains unchanged at 292825. Put another way, the maximal dimension of $(\mathscr A\mathcal Q^{\otimes}_5)_{49}$ is attained when MAX equals $S16,$ and this dimension is not surpassed as MAX increases beyond $S16.$}  However, as we can see in the contradictory results above, after setting MAX to $S32,$ the dimension of $H$ actually decreases, and therefore the dimension of $(\mathscr A\mathcal Q^{\otimes}_5)_{49}$ increases, becoming larger than when MAX was set to $S16.$ Thus, this represents an erroneous result when implemented in \texttt{MAGMA} using the original algorithm \cite{D.P7}. \textit{Consequently, we can observe that the incorrect results in \textbf{Theorems 3.3(ii)}, \textbf{3.6}, and \textbf{Corollary 3.4} of Tin's paper \cite{Tin2} lie in the erroneous output of the original \texttt{MAGMA} algorithm \cite{D.P7} when MAX was set to $S32.$} 

And as readers have observed in our new algorithm above, setting MAX to either $S16$ or $S32$ does not change the dimension result of $(\mathscr A\mathcal Q^{\otimes}_5)_{49}$, which always equals 2856. The explanation lies in the algorithm's output when MAX is set to $S32$: $\text{rank}(XS[5]) = 0$. This leads to $\dim(H) = \text{CumulativeRank}(XS, 6) = \text{CumulativeRank}(XS, 5) = 289969$, while $\dim V$ remains 292825 regardless of whether MAX is $S32$ or $S16.$ 
\medskip

Another example reinforcing our assertion regarding the output error of the original algorithm \cite{D.P7} is as follows: with $N = 53,$ if MAX is set to $S16,$ then $$\dim (\mathscr A\mathcal Q^{\otimes}_5)_{53} = \dim V - \dim H = 395010 - 392597 = 2413.$$ Nonetheless, setting MAX to $S32$ yields 
 $$\dim (\mathscr A\mathcal Q^{\otimes}_5)_{53} = \dim V - \dim H = 395010 - 392501 = 2509 > 2413.$$ 
When the new algorithmic method is applied as described above, with MAX set to either $S16$ or $S32,$ the correct result is (see also \cite{D.P12}):
$$ \dim (\mathscr A\mathcal Q^{\otimes}_5)_{53} = \dim V - \dim H = 395010 - 392809 = 2201.$$

\medskip

To further demonstrate the precision and effectiveness of our novel algorithm, we examine the instance where $N = 61$. In both cases where MAX equals $S16$ or $S32,$ our new algorithm generates the same output, namely:
$$ \dim (\mathscr A\mathcal Q^{\otimes}_5)_{61} = \dim V - \dim H = 677040 - 676095 = 945.$$
This result corroborates our previous publication \cite[Theorem 1.1]{D.P2}, where we employed complex hand-calculated techniques.

\medskip

Now, an additional illustration of our algorithm's efficacy is evident when comparing our results to those in Sum's paper \cite{N.S2}. While Sum's partial study of $(\mathscr A\mathcal Q^{\otimes}_5)_{62}$ proposed an estimate $1105 \leq \dim (\mathscr A\mathcal Q^{\otimes}_5)_{62} \leq 1175,$ our more precise analysis reveals:
$$\dim (\mathscr A\mathcal Q^{\otimes}_5)_{62} = \dim V - \dim H = 720720 - 719549 = 1171.$$
A discussion of this result is also included in our recent paper \cite{D.P12}. 

\newpage

Our last contribution is to construct an advanced \texttt{SAGEMATH} algorithm for explicitly identifying a basis for the indecomposables $\mathscr A\mathcal Q^{\otimes}_s$ for certain positive degrees $N.$ The algorithm's applicability extends to any $s$ and degree $N$, limited solely by the available computational memory. To exemplify, utilizing this algorithm for the case where $s = 5$ and $N = 7$, as demonstrated below, will generate exactly one explicit basis for $(\mathscr A\mathcal Q^{\otimes}_5)_7.$ This results in 110 distinct classes, each uniquely represented by one of the 110 admissible monomials, corroborating a known result by Mothebe et al. \cite{MM}.

\medskip

\medskip

\begin{lstlisting}[basicstyle={\fontsize{8.5pt}{9.5pt}\ttfamily},]

# Set up the finite field and the polynomial ring
N = 7
MAX = 4  # Largest j so that Sq(j, x) will be called
R = PolynomialRing(GF(2), ['x1', 'x2', 'x3', 'x4', 'x5'])
x1, x2, x3, x4, x5 = R.gens()

# Initialize the power series rings and homomorphisms for Steenrod squares
SQH = {}
for j in range(1, MAX + 1):
    PS = PowerSeriesRing(R, 's', j + 1)
    s = PS.gen()
    Sqhom = R.hom([x + s*x*x for x in R.gens()], codomain=PS)
    SQH[j] = Sqhom

# Define the Steenrod squares function
def Sq(j, x):
    Sqhom = SQH[j]
    c = Sqhom(x).coefficients()
    return R(0) if j + 1 > len(c) else c[j]

# Function to generate monomials of a given degree
def monomials(R, d):
    from itertools import combinations_with_replacement
    gens = R.gens()
    return [prod(x for x in m) for m in combinations_with_replacement(gens, d)]

# Get the monomials of degree N and set up the vector space
M = monomials(R, N)
V = VectorSpace(GF(2), len(M))

# Function to convert a sum of monomials to a vector
def MtoV(m):
    if m == 0:
        return V(0)
    v = V(0)
    for mm in m.monomials():
        if mm.degree() == N:
            try:
                i = M.index(mm)
                v += V.gen(i)
            except ValueError:
                pass  # Ignore monomials not in M
    return v

# Get the monomials of different degrees
M1 = monomials(R, N - 1)
M2 = monomials(R, N - 2)
M4 = monomials(R, N - 4)

# Calculate the Steenrod squares for each degree
S1 = [Sq(1, x) for x in M1]
S2 = [Sq(2, x) for x in M2]
S4 = [Sq(4, x) for x in M4]

# Convert Steenrod squares to vectors
V1 = [MtoV(s) for s in S1]
V2 = [MtoV(s) for s in S2]
V4 = [MtoV(s) for s in S4]

# Combine all vectors
all_vectors = V1 + V2 + V4

# Create the matrix
combined_matrix = Matrix(GF(2), all_vectors)

# Compute the echelon form to identify pivot columns
E = combined_matrix.echelon_form()

# Print some debug information
print(f"Dimension of combined matrix: {combined_matrix.nrows()} x {combined_matrix.ncols()}")
print(f"Rank of combined matrix: {combined_matrix.rank()}")

# Identify the pivot and non-pivot columns
pivots = [min(j for j in range(E.ncols()) if not E[i, j].is_zero()) for i in range(E.nrows()) if not E[i].is_zero()]
print(f"Number of pivot columns: {len(pivots)}")
non_pivots = [i for i in range(combined_matrix.ncols()) if i not in pivots]
print(f"Number of non-pivot columns: {len(non_pivots)}")

# Function to check if a monomial is valid
def is_valid_monomial(m):
    # Get the exponents of the monomial
    exps = m.exponents()[0]  # Get the first element, which is a tuple of exponents
    a1 = int(exps[0])
    
    # Check if a1 is of the form 2^u - 1
    if (a1 + 1) & a1 != 0:  # Check if a1 is a number of the form 2^u - 1
        return False
    
    # Check if the sum of the exponents equals N
    if sum(exps) != N:
        return False
    
    return True

# Convert monomials to vectors
monomial_vectors = {m: MtoV(m) for m in M}

# Function to check if a vector is in the span of the given set of vectors
def in_span(v, vectors):
    mat = Matrix(GF(2), vectors).transpose()
    aug = mat.augment(v)
    echelon_form = aug.echelon_form()
    return echelon_form[:,-1].is_zero()

# Filter out basis monomials that lie in the space generated by S1, S2, S4
basis_monomials = [M[i] for i in non_pivots if is_valid_monomial(M[i])]
filtered_basis_monomials = [m for m in basis_monomials if not in_span(monomial_vectors[m], all_vectors)]

if len(filtered_basis_monomials) == 0:
    print("No basis monomials found. The quotient space may be 0.")
else:
    print(f"Number of basis monomials: {len(filtered_basis_monomials)}")
    print("All basis monomials for the quotient space are:")
    cols = 4  # Number of columns
    rows = (len(filtered_basis_monomials) + cols - 1) // cols  # Ceiling division

    for i in range(rows):
        row = []
        for j in range(cols):
            idx = i + j * rows
            if idx < len(filtered_basis_monomials):
                monomial_str = str(filtered_basis_monomials[idx])
                row.append(f"{idx+1:2d}: {monomial_str:<15}")
        print("  ".join(row))


\end{lstlisting}

\medskip

\textbf{Here's what the algorithm produced:}

\medskip

\begin{lstlisting}[basicstyle={\fontsize{8.5pt}{9.5pt}\ttfamily},]

Dimension of combined matrix: 371 x 330
Rank of combined matrix: 220
Number of pivot columns: 220
Number of non-pivot columns: 110
Number of basis monomials: 110
All basis monomials for the quotient space are:
 1: x1^7             29: x1*x2^3*x3*x5^2  57: x1*x2*x4^2*x5^3  85: x2*x3^6        
 2: x1^3*x2^3*x3     30: x1*x2^3*x4^3     58: x1*x2*x4*x5^4    86: x2*x3^3*x4^3   
 3: x1^3*x2^3*x4     31: x1*x2^3*x4^2*x5  59: x1*x2*x5^5       87: x2*x3^3*x4^2*x5
 4: x1^3*x2^3*x5     32: x1*x2^3*x4*x5^2  60: x1*x3^6          88: x2*x3^3*x4*x5^2
 5: x1^3*x2*x3^3     33: x1*x2^3*x5^3     61: x1*x3^3*x4^3     89: x2*x3^3*x5^3   
 6: x1^3*x2*x3^2*x4  34: x1*x2^2*x3^3*x4  62: x1*x3^3*x4^2*x5  90: x2*x3^2*x4^3*x5
 7: x1^3*x2*x3^2*x5  35: x1*x2^2*x3^3*x5  63: x1*x3^3*x4*x5^2  91: x2*x3^2*x4*x5^3
 8: x1^3*x2*x3*x4^2  36: x1*x2^2*x3*x4^3  64: x1*x3^3*x5^3     92: x2*x3*x4^5     
 9: x1^3*x2*x3*x4*x5  37: x1*x2^2*x3*x4^2*x5  65: x1*x3^2*x4^3*x5  93: x2*x3*x4^3*x5^2
10: x1^3*x2*x3*x5^2  38: x1*x2^2*x3*x4*x5^2  66: x1*x3^2*x4*x5^3  94: x2*x3*x4^2*x5^3
11: x1^3*x2*x4^3     39: x1*x2^2*x3*x5^3  67: x1*x3*x4^5       95: x2*x3*x4*x5^4  
12: x1^3*x2*x4^2*x5  40: x1*x2^2*x4^3*x5  68: x1*x3*x4^3*x5^2  96: x2*x3*x5^5     
13: x1^3*x2*x4*x5^2  41: x1*x2^2*x4*x5^3  69: x1*x3*x4^2*x5^3  97: x2*x4^6        
14: x1^3*x2*x5^3     42: x1*x2*x3^5       70: x1*x3*x4*x5^4    98: x2*x4^3*x5^3   
15: x1^3*x3^3*x4     43: x1*x2*x3^3*x4^2  71: x1*x3*x5^5       99: x2*x4*x5^5     
16: x1^3*x3^3*x5     44: x1*x2*x3^3*x4*x5  72: x1*x4^6          100: x2*x5^6        
17: x1^3*x3*x4^3     45: x1*x2*x3^3*x5^2  73: x1*x4^3*x5^3     101: x3^7           
18: x1^3*x3*x4^2*x5  46: x1*x2*x3^2*x4^3  74: x1*x4*x5^5       102: x3^3*x4^3*x5   
19: x1^3*x3*x4*x5^2  47: x1*x2*x3^2*x4^2*x5  75: x1*x5^6          103: x3^3*x4*x5^3   
20: x1^3*x3*x5^3     48: x1*x2*x3^2*x4*x5^2  76: x2^7             104: x3*x4^6        
21: x1^3*x4^3*x5     49: x1*x2*x3^2*x5^3  77: x2^3*x3^3*x4     105: x3*x4^3*x5^3   
22: x1^3*x4*x5^3     50: x1*x2*x3*x4^4    78: x2^3*x3^3*x5     106: x3*x4*x5^5     
23: x1*x2^6          51: x1*x2*x3*x4^3*x5  79: x2^3*x3*x4^3     107: x3*x5^6        
24: x1*x2^3*x3^3     52: x1*x2*x3*x4^2*x5^2  80: x2^3*x3*x4^2*x5  108: x4^7           
25: x1*x2^3*x3^2*x4  53: x1*x2*x3*x4*x5^3  81: x2^3*x3*x4*x5^2  109: x4*x5^6        
26: x1*x2^3*x3^2*x5  54: x1*x2*x3*x5^4    82: x2^3*x3*x5^3     110: x5^7           
27: x1*x2^3*x3*x4^2  55: x1*x2*x4^5       83: x2^3*x4^3*x5   
28: x1*x2^3*x3*x4*x5  56: x1*x2*x4^3*x5^2  84: x2^3*x4*x5^3   

\end{lstlisting}

\medskip

To better highlight the algorithm's capability in explicitly finding the basis of the indecomposables $(\mathscr A\mathcal Q^{\otimes}_5)_{N},$ we will examine the case where degree $N = 13$. With a subtle optimization of this algorithm (details omitted for conciseness and for readers to explore independently), we obtain precise results for both the dimension and basis of the indecomposables $(\mathscr A\mathcal Q^{\otimes}_5)_{13}$ as follows:

\medskip

\begin{lstlisting}[basicstyle={\fontsize{8.5pt}{9.5pt}\ttfamily},]

Dimension of combined matrix: 4026 x 2380
Rank of combined matrix: 2130
Number of pivot columns: 2130
Number of non-pivot columns: 250
All basis monomials for the quotient space are:

1: x1^7*x2^3*x3^3 | 2: x1^7*x2^3*x3*x4^2
3: x1^7*x2^3*x3*x4*x5 | 4: x1^7*x2^3*x3*x5^2
5: x1^7*x2^3*x4^3 | 6: x1^7*x2^3*x4*x5^2
7: x1^7*x2^3*x5^3 | 8: x1^7*x2*x3^3*x4^2
9: x1^7*x2*x3^3*x4*x5 | 10: x1^7*x2*x3^3*x5^2
11: x1^7*x2*x3^2*x4^3 | 12: x1^7*x2*x3^2*x4*x5^2
13: x1^7*x2*x3^2*x5^3 | 14: x1^7*x2*x3*x4^3*x5
15: x1^7*x2*x3*x4^2*x5^2 | 16: x1^7*x2*x3*x4*x5^3
17: x1^7*x2*x4^3*x5^2 | 18: x1^7*x2*x4^2*x5^3
19: x1^7*x3^3*x4^3 | 20: x1^7*x3^3*x4*x5^2
21: x1^7*x3^3*x5^3 | 22: x1^7*x3*x4^3*x5^2
23: x1^7*x3*x4^2*x5^3 | 24: x1^7*x4^3*x5^3
25: x1^3*x2^7*x3^3 | 26: x1^3*x2^7*x3*x4^2
27: x1^3*x2^7*x3*x4*x5 | 28: x1^3*x2^7*x3*x5^2
29: x1^3*x2^7*x4^3 | 30: x1^3*x2^7*x4*x5^2
31: x1^3*x2^7*x5^3 | 32: x1^3*x2^5*x3^3*x4^2
33: x1^3*x2^5*x3^3*x4*x5 | 34: x1^3*x2^5*x3^3*x5^2
35: x1^3*x2^5*x3^2*x4^3 | 36: x1^3*x2^5*x3^2*x4*x5^2
37: x1^3*x2^5*x3^2*x5^3 | 38: x1^3*x2^5*x3*x4^3*x5
39: x1^3*x2^5*x3*x4^2*x5^2 | 40: x1^3*x2^5*x3*x4*x5^3
41: x1^3*x2^5*x4^3*x5^2 | 42: x1^3*x2^5*x4^2*x5^3
43: x1^3*x2^3*x3^7 | 44: x1^3*x2^3*x3^5*x4^2
45: x1^3*x2^3*x3^5*x4*x5 | 46: x1^3*x2^3*x3^5*x5^2
47: x1^3*x2^3*x3^4*x4^3 | 48: x1^3*x2^3*x3^4*x4*x5^2
49: x1^3*x2^3*x3^4*x5^3 | 50: x1^3*x2^3*x3^3*x4^4
51: x1^3*x2^3*x3^3*x4^3*x5 | 52: x1^3*x2^4*x3*x4^2*x5^3
53: x1^3*x2^3*x3^3*x4*x5^3 | 54: x1^3*x2^3*x3^3*x5^4
55: x1^3*x2^4*x3*x4^3*x5^2 | 56: x1^3*x2^4*x3^3*x4*x5^2
57: x1^3*x2^3*x3*x4^6 | 58: x1^3*x2^3*x3*x4^5*x5
59: x1^3*x2^3*x3*x4^4*x5^2 | 60: x1^3*x2^3*x3*x4^3*x5^3
61: x1^3*x2^3*x3*x4^2*x5^4 | 62: x1^3*x2^3*x3*x4*x5^5
63: x1^3*x2^3*x3*x5^6 | 64: x1^3*x2^3*x4^7
65: x1^3*x2^3*x4^5*x5^2 | 66: x1^3*x2^3*x4^4*x5^3
67: x1^3*x2^3*x4^3*x5^4 | 68: x1^3*x2^3*x4*x5^6
69: x1^3*x2^3*x5^7 | 70: x1^3*x2*x3^7*x4^2
71: x1^3*x2*x3^7*x4*x5 | 72: x1^3*x2*x3^7*x5^2
73: x1^3*x2*x3^6*x4^3 | 74: x1^3*x2*x3^6*x4*x5^2
75: x1^3*x2*x3^6*x5^3 | 76: x1^3*x2*x3^5*x4^3*x5
77: x1^3*x2*x3^5*x4^2*x5^2 | 78: x1^3*x2*x3^5*x4*x5^3
79: x1^3*x2*x3^4*x4^3*x5^2 | 80: x1^3*x2*x3^4*x4^2*x5^3
81: x1^3*x2*x3^3*x4^6 | 82: x1^3*x2*x3^3*x4^5*x5
83: x1^3*x2*x3^3*x4^4*x5^2 | 84: x1^3*x2*x3^3*x4^3*x5^3
85: x1^3*x2*x3^3*x4^2*x5^4 | 86: x1^3*x2*x3^3*x4*x5^5
87: x1^3*x2*x3^3*x5^6 | 88: x1^3*x2*x3^2*x4^7
89: x1^3*x2*x3^2*x4^5*x5^2 | 90: x1^3*x2*x3^2*x4^4*x5^3
91: x1^3*x2*x3^2*x4^3*x5^4 | 92: x1^3*x2*x3^2*x4*x5^6
93: x1^3*x2*x3^2*x5^7 | 94: x1^3*x2*x3*x4^7*x5
95: x1^3*x2*x3*x4^6*x5^2 | 96: x1^3*x2*x3*x4^5*x5^3
97: x1^3*x2*x3*x4^3*x5^5 | 98: x1^3*x2*x3*x4^2*x5^6
99: x1^3*x2*x3*x4*x5^7 | 100: x1^3*x2*x4^7*x5^2
101: x1^3*x2*x4^6*x5^3 | 102: x1^3*x2*x4^3*x5^6
103: x1^3*x2*x4^2*x5^7 | 104: x1^3*x3^7*x4^3
105: x1^3*x3^7*x4*x5^2 | 106: x1^3*x3^7*x5^3
107: x1^3*x3^5*x4^3*x5^2 | 108: x1^3*x3^5*x4^2*x5^3
109: x1^3*x3^3*x4^7 | 110: x1^3*x3^3*x4^5*x5^2
111: x1^3*x3^3*x4^4*x5^3 | 112: x1^3*x3^3*x4^3*x5^4
113: x1^3*x3^3*x4*x5^6 | 114: x1^3*x3^3*x5^7
115: x1^3*x3*x4^7*x5^2 | 116: x1^3*x3*x4^6*x5^3
117: x1^3*x3*x4^3*x5^6 | 118: x1^3*x3*x4^2*x5^7
119: x1^3*x4^7*x5^3 | 120: x1^3*x4^3*x5^7
121: x1*x2^7*x3^3*x4^2 | 122: x1*x2^7*x3^3*x4*x5
123: x1*x2^7*x3^3*x5^2 | 124: x1*x2^7*x3^2*x4^3
125: x1*x2^7*x3^2*x4*x5^2 | 126: x1*x2^7*x3^2*x5^3
127: x1*x2^7*x3*x4^3*x5 | 128: x1*x2^7*x3*x4^2*x5^2
129: x1*x2^7*x3*x4*x5^3 | 130: x1*x2^7*x4^3*x5^2
131: x1*x2^7*x4^2*x5^3 | 132: x1*x2^6*x3^3*x4^3
133: x1*x2^6*x3^3*x4*x5^2 | 134: x1*x2^6*x3^3*x5^3
135: x1*x2^6*x3*x4^3*x5^2 | 136: x1*x2^6*x3*x4^2*x5^3
137: x1*x2^6*x4^3*x5^3 | 138: x1*x2^3*x3^7*x4^2
139: x1*x2^3*x3^7*x4*x5 | 140: x1*x2^3*x3^7*x5^2
141: x1*x2^3*x3^6*x4^3 | 142: x1*x2^3*x3^6*x4*x5^2
143: x1*x2^3*x3^6*x5^3 | 144: x1*x2^3*x3^5*x4^3*x5
145: x1*x2^3*x3^5*x4^2*x5^2 | 146: x1*x2^3*x3^5*x4*x5^3
147: x1*x2^3*x3^4*x4^3*x5^2 | 148: x1*x2^3*x3^4*x4^2*x5^3
149: x1*x2^3*x3^3*x4^6 | 150: x1*x2^3*x3^3*x4^5*x5
151: x1*x2^3*x3^3*x4^4*x5^2 | 152: x1*x2^3*x3^3*x4^3*x5^3
153: x1*x2^3*x3^3*x4^2*x5^4 | 154: x1*x2^3*x3^3*x4*x5^5
155: x1*x2^3*x3^3*x5^6 | 156: x1*x2^3*x3^2*x4^7
157: x1*x2^3*x3^2*x4^5*x5^2 | 158: x1*x2^3*x3^2*x4^4*x5^3
159: x1*x2^3*x3^2*x4^3*x5^4 | 160: x1*x2^3*x3^2*x4*x5^6
161: x1*x2^3*x3^2*x5^7 | 162: x1*x2^3*x3*x4^7*x5
163: x1*x2^3*x3*x4^6*x5^2 | 164: x1*x2^3*x3*x4^5*x5^3
165: x1*x2^3*x3*x4^3*x5^5 | 166: x1*x2^3*x3*x4^2*x5^6
167: x1*x2^3*x3*x4*x5^7 | 168: x1*x2^3*x4^7*x5^2
169: x1*x2^3*x4^6*x5^3 | 170: x1*x2^3*x4^3*x5^6
171: x1*x2^3*x4^2*x5^7 | 172: x1*x2^2*x3^7*x4^3
173: x1*x2^2*x3^7*x4*x5^2 | 174: x1*x2^2*x3^7*x5^3
175: x1*x2^2*x3^5*x4^3*x5^2 | 176: x1*x2^2*x3^5*x4^2*x5^3
177: x1*x2^2*x3^3*x4^7 | 178: x1*x2^2*x3^3*x4^5*x5^2
179: x1*x2^2*x3^3*x4^4*x5^3 | 180: x1*x2^2*x3^3*x4^3*x5^4
181: x1*x2^2*x3^3*x4*x5^6 | 182: x1*x2^2*x3^3*x5^7
183: x1*x2^2*x3*x4^7*x5^2 | 184: x1*x2^2*x3*x4^6*x5^3
185: x1*x2^2*x3*x4^3*x5^6 | 186: x1*x2^2*x3*x4^2*x5^7
187: x1*x2^2*x4^7*x5^3 | 188: x1*x2^2*x4^3*x5^7
189: x1*x2*x3^7*x4^3*x5 | 190: x1*x2*x3^7*x4^2*x5^2
191: x1*x2*x3^7*x4*x5^3 | 192: x1*x2*x3^6*x4^3*x5^2
193: x1*x2*x3^6*x4^2*x5^3 | 194: x1*x2^2*x3^4*x4^3*x5^3
195: x1*x2*x3^3*x4^7*x5 | 196: x1*x2*x3^3*x4^6*x5^2
197: x1*x2*x3^3*x4^5*x5^3 | 198: x1*x2*x3^3*x4^3*x5^5
199: x1*x2*x3^3*x4^2*x5^6 | 200: x1*x2*x3^3*x4*x5^7
201: x1*x2*x3^2*x4^7*x5^2 | 202: x1*x2*x3^2*x4^6*x5^3
203: x1*x2*x3^2*x4^3*x5^6 | 204: x1*x2*x3^2*x4^2*x5^7
205: x1*x2*x3*x4^7*x5^3 | 206: x1*x2*x3*x4^3*x5^7
207: x1*x3^7*x4^3*x5^2 | 208: x1*x3^7*x4^2*x5^3
209: x1*x3^6*x4^3*x5^3 | 210: x1*x3^3*x4^7*x5^2
211: x1*x3^3*x4^6*x5^3 | 212: x1*x3^3*x4^3*x5^6
213: x1*x3^3*x4^2*x5^7 | 214: x1*x3^2*x4^7*x5^3
215: x1*x3^2*x4^3*x5^7 | 216: x2^7*x3^3*x4^3
217: x2^7*x3^3*x4*x5^2 | 218: x2^7*x3^3*x5^3
219: x2^7*x3*x4^3*x5^2 | 220: x2^7*x3*x4^2*x5^3
221: x2^7*x4^3*x5^3 | 222: x2^3*x3^7*x4^3
223: x2^3*x3^7*x4*x5^2 | 224: x2^3*x3^7*x5^3
225: x2^3*x3^5*x4^3*x5^2 | 226: x2^3*x3^5*x4^2*x5^3
227: x2^3*x3^3*x4^7 | 228: x2^3*x3^3*x4^5*x5^2
229: x2^3*x3^3*x4^4*x5^3 | 230: x2^3*x3^3*x4^3*x5^4
231: x2^3*x3^3*x4*x5^6 | 232: x2^3*x3^3*x5^7
233: x2^3*x3*x4^7*x5^2 | 234: x2^3*x3*x4^6*x5^3
235: x2^3*x3*x4^3*x5^6 | 236: x2^3*x3*x4^2*x5^7
237: x2^3*x4^7*x5^3 | 238: x2^3*x4^3*x5^7
239: x2*x3^7*x4^3*x5^2 | 240: x2*x3^7*x4^2*x5^3
241: x2*x3^6*x4^3*x5^3 | 242: x2*x3^3*x4^7*x5^2
243: x2*x3^3*x4^6*x5^3 | 244: x2*x3^3*x4^3*x5^6
245: x2*x3^3*x4^2*x5^7 | 246: x2*x3^2*x4^7*x5^3
247: x2*x3^2*x4^3*x5^7 | 248: x3^7*x4^3*x5^3
249: x3^3*x4^7*x5^3 | 250: x3^3*x4^3*x5^7






\end{lstlisting}

\medskip

The data verifies the precision of the previous hand calculations as described in our published article \cite[Theorem 1.1]{D.P2}.

\medskip

We underscore that this advanced \texttt{SAGEMATH} algorithm builds upon our earlier efforts \cite{D.P11} in two significant ways: the enhanced algorithm retains the original precision in computing the dimensionality of $(\mathscr A\mathcal Q^{\otimes}_5)_N$ and extends its functionality by enabling the production of an explicit basis for $(\mathscr A\mathcal Q^{\otimes}_5)_N.$

The robustness of our method is demonstrated through its successful application in verifying dimensions of $\mathscr A\mathcal Q^{\otimes}_s$ for $s = 4$ at degrees $N \geq 100,$ producing results consistent with Sum's work in \cite{N.S1}. Moreover, cross-validation for $s = 5$ also produces accurate outcomes that match previously published results in \cite{P.S1, D.P2, D.P4, D.P7, D.P7-1, D.P8, D.P11, D.P11-1, D.P12, D.P12-1}, and \cite{N.S6, N.S2}.These data indicate that the \texttt{SAGEMATH} algorithms we developed are fully accurate and bolster the results we have  presented above.
 
In conclusion, we have great confidence in the accuracy of our new \texttt{SAGEMATH} algorithms, which do not encounter the overflow issues of the original \texttt{MAGMA} algorithm version \cite[Appendix A.9]{D.P7}. Furthermore, if we want to add something beyond $S32,$ it should also be easy to handle. We therefore hope this approach allows us to handle $(\mathscr A\mathcal Q^{\otimes}_s)_N$ for a greater number of variables $s$ and for any degree $N.$ Of course, with increasing degree $N \geq 49$, we will have to set MAX $\geq 2^s,$ where $s \geq 4.$ \textit{This situation demands that we refine the algorithm to alleviate the computer's memory strain during program execution}.
\end{rem}

\section{Behavior of the fifth cohomological transfer in the internal degrees $\pmb{N_d =5(2^{d}-1) + 11.2^{d+1}}$}\label{s4}

In this section, building on the results of our earlier work \cite{D.P4} and Theorem \ref{dlc1}(iii), we will investigate the behavior of the fifth Singer cohomological transfer \cite{W.S1} in the internal degree $N_d=5(2^{d}-1) + 11.2^{d+1}.$ 

We knew that in homotopy theory, the investigation of transfer maps linked to the trivial homomorphism $1\longrightarrow \mathbb F_2^{\oplus s}$ holds significant importance. Equally crucial is its algebraic variant in the level of Adams spectral sequences, referred to as the Singer cohomological transfer:
 $$ \varphi_s^{\mathscr A}: [(\mathscr A\mathcal Q^{\otimes}_s)_{N}^{GL(s, \mathbb F_2)}]^{*} \longrightarrow{\rm Ext}_{\mathscr A}^{s, s+N}(\mathbb F_2, \mathbb F_2).$$ 
%from the space of $GL(5, \mathbb F_2)$-coinvariants of $(\mathscr A\mathcal Q^{\otimes}_5)_{N_d}$ to the $E_2$-term of the Adams spectral sequence. 
Here $[(\mathscr A\mathcal Q^{\otimes}_s)_{N}^{GL(s, \mathbb F_2)}]^{*}$ denotes the dual of the $GL(s, \mathbb F_2)$-invariant  $(\mathscr A\mathcal Q^{\otimes}_s)_{N}^{GL(s, \mathbb F_2)}.$ For further insights into this transfer, readers are advised to consult works such as those by Ch\ohorn n-H\`a \cite{Chon-Ha0, Chon-Ha, Chon-Ha2}, H\uhorn ng \cite{Hung}, H\uhorn ng-Tr\uhorn \`\ohorn ng \cite{Hung2}, and the present author \cite{D.P7-1, D.P7-2, D.P7-3, D.P8, D.P11, D.P12, D.P13}.
%by the present author \cite{D.P7-1, D.P7-2, D.P7-3, D.P8, D.P10, D.P11, D.P12, D.P13}.

{\bf Case $\pmb{d = 0}.$} We have derived the following technical theorem, which serves as the second primary result of our work.

\begin{thm}\label{dlc2}
The Singer transfer $\varphi_5^{\mathscr A}: [(\mathscr A\mathcal Q^{\otimes}_5)_{N_0}^{GL(5, \mathbb F_2)}]^{*} \longrightarrow{\rm Ext}_{\mathscr A}^{5, 5+N_0}(\mathbb F_2, \mathbb F_2)$ is an isomorphism.
\end{thm}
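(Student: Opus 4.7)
My plan is to establish the isomorphism by comparing explicit dimensions on the two sides and then checking that the image of $\varphi_5^{\mathscr A}$ contains a set of generators of the target. First I would leverage the admissible monomial basis for $(\mathscr A\mathcal Q^{\otimes}_5)_{N_0}$ produced in our earlier work \cite{D.P4}, where $\dim (\mathscr A\mathcal Q^{\otimes}_5)_{22} = 965.$ Using the weight-vector decomposition $(\mathscr A\mathcal Q^{\otimes}_5)_{N_0} \cong \bigoplus_{\deg \omega = N_0} (\mathcal Q\mathcal P_5)(\omega)$ as $GL(5, \mathbb F_2)$-modules, together with Lemma \ref{dlPS} classifying the admissible weight vectors at degree $N_0 = 22,$ I would reduce the computation of the invariants $[(\mathscr A\mathcal Q^{\otimes}_5)_{N_0}]^{GL(5, \mathbb F_2)}$ to finding, within each summand $(\mathcal Q\mathcal P_5)(\omega),$ those classes $[u]_{\omega}$ which satisfy $\rho_j(u) + u \sim_{\omega} 0$ for every $1 \leq j \leq 5$ (equivalently, $\Sigma_5$-invariance paired with invariance under the transvection $\rho_5$).

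Next, the target ${\rm Ext}_{\mathscr A}^{5, 5+N_0}(\mathbb F_2, \mathbb F_2) = {\rm Ext}_{\mathscr A}^{5, 27}(\mathbb F_2, \mathbb F_2)$ is known from the classical computations of Lin and Bruner. I would record its dimension and list explicit generating cocycles in the lambda algebra $\Lambda.$ Once the two dimensions are shown to agree, I would complete the argument by writing down explicit $\Lambda$-cycles representing $\varphi_5^{\mathscr A}$ applied to a chosen basis of $[(\mathscr A\mathcal Q^{\otimes}_5)_{N_0}]^{GL(5, \mathbb F_2)},$ and then verifying that these cycles span every generator of ${\rm Ext}_{\mathscr A}^{5, 27}(\mathbb F_2, \mathbb F_2).$ This yields surjectivity, which combined with the dimension match forces the transfer to be an isomorphism. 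An alternative route, if surjectivity proves delicate, is to invoke injectivity criteria already used in \cite{Chon-Ha, D.P8, D.P11} for nearby bidegrees.

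The principal difficulty will lie in determining the invariant subspace inside the 965-dimensional indecomposable module: the transvection $\rho_5$ mixes monomials across distinct weight components, so one must perform the computation modulo the finer equivalence $\sim_{\omega}$ (that is, modulo $\mathscr A^+\mathcal P_5 + \mathcal P_5^-(\omega)$) rather than modulo $\mathscr A^+\mathcal P_5.$ To keep the bookkeeping manageable I intend to specialize the \texttt{SAGEMATH} algorithm developed in Section \ref{s3} to the degree $N_0 = 22$ in order to obtain an explicit admissible monomial basis, and then mechanically solve the resulting linear system for the $GL(5, \mathbb F_2)$-fixed vectors. With the invariants explicitly in hand, the final verification that each $\Lambda$-cycle representing a generator of ${\rm Ext}_{\mathscr A}^{5, 27}(\mathbb F_2, \mathbb F_2)$ is hit by $\varphi_5^{\mathscr A}$ should reduce to a routine (though lengthy) check on a small list of invariant classes.
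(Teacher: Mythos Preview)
Your outline follows the same skeleton as the paper's proof---decompose $(\mathscr A\mathcal Q^{\otimes}_5)_{N_0}$ into the weight pieces $(\mathcal Q\mathcal P_5)(\widetilde\omega_j)$ from \cite{D.P4}, compute the $GL(5,\mathbb F_2)$-invariants summand by summand via the relations $\rho_j(u)+u\sim_{\omega}0$, and compare with ${\rm Ext}_{\mathscr A}^{5,27}(\mathbb F_2,\mathbb F_2)$---so in that sense the approach is correct and aligned with the paper.

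However, you are preparing for much more work than is needed. The crucial fact, available from Lin \cite{W.L} and Chen \cite{T.C}, is that ${\rm Ext}_{\mathscr A}^{5,27}(\mathbb F_2,\mathbb F_2)=0$; and the paper shows by direct hand computation (illustrated in full for $\widetilde\omega_3$) that each $[(\mathcal Q\mathcal P_5)(\widetilde\omega_j)]^{GL(5,\mathbb F_2)}=0$. Both sides of $\varphi_5^{\mathscr A}$ therefore vanish, and the isomorphism is trivial. Your planned construction of explicit $\Lambda$-cycles, verification that generators are hit, and fallback injectivity criteria are all superfluous here---once you look up the Ext group you will find there are no generators to hit. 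One minor correction: your worry that ``the transvection $\rho_5$ mixes monomials across distinct weight components'' is unfounded at the level of the quotients, since each $(\mathcal Q\mathcal P_5)(\omega)$ is already a $GL(5,\mathbb F_2)$-module (see \cite{W.W}); the invariant computation can and should be done entirely within each weight summand.
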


\begin{proof}
In \cite{D.P4}, we proved that $(\mathscr A\mathcal Q^{\otimes}_5)_{N_0} \cong \bigoplus_{1\leq j\leq 5} (\mathcal Q\mathcal {P}_5)({\widetilde{\omega}_j}),$ where $$ \widetilde{\omega}_1:= (2,2,2,1), \ \ \widetilde{\omega}_2:= (2,4,1,1), \ \ \widetilde{\omega}_3:= (2,4,3), \ \ \widetilde{\omega}_4:= (4,3,1,1),\ \ \widetilde{\omega}_5:= (4,3,3)$$
and $$ \begin{array}{ll} 
\medskip
& \dim (\mathcal Q\mathcal {P}_5)({\widetilde{\omega}_1})= 280,\ \ \dim (\mathcal Q\mathcal {P}_5)({\widetilde{\omega}_2})= 25,\ \ \dim (\mathcal Q\mathcal {P}_5)({\widetilde{\omega}_3})= 5,\\
&\dim (\mathcal Q\mathcal {P}_5)({\widetilde{\omega}_4})= 480,\ \ \dim (\mathcal Q\mathcal {P}_5)({\widetilde{\omega}_5})= 175.
\end{array}$$
Then, one has an estimate $$\dim [(\mathscr A\mathcal Q^{\otimes}_5)_{N_0}]^{GL(5,\mathbb F_2)}\leq \sum_{1\leq j\leq 5}\dim [(\mathcal Q\mathcal {P}_5)({\widetilde{\omega}_j})]^{GL(5,\mathbb F_2)}.$$
Through a straightforward computation employing the $\mathscr A$-homomorphisms $\rho_j: \mathcal {P}_5\longrightarrow \mathcal {P}_5,$  for $1\leq j\leq 5,$ we ascertain that $[(\mathcal Q\mathcal {P}_5)({\widetilde{\omega}_j})]^{GL(5,\mathbb F_2)} = 0$ for all $j.$ To simplify matters, we will meticulously compute the invariant space $[(\mathcal Q\mathcal {P}_5)({\widetilde{\omega}_3})]^{GL(5,\mathbb F_2)}.$ Similar computations can yield the remaining spaces, following the same approach.

As demonstrated in \cite{D.P4}, the set $\{[{\rm adm}_j]_{\widetilde{\omega}_3}:\ 1\leq j\leq 5\}$ serves as a basis of the indecomposables $(\mathcal Q\mathcal {P}_5)({\widetilde{\omega}_3}),$ where the admissible monomials ${\rm adm}_j$ are defined as follows:
$$ \begin{array}{ll}
\medskip
&{\rm adm}_1:= x_1x_2^{3}x_3^{6}x_4^{6}x_5^{6}, \ \ {\rm adm}_2:= x_1^{3}x_2x_3^{6}x_4^{6}x_5^{6}, \ \ {\rm adm}_2:= x_1^{3}x_2^{5}x_3^{2}x_4^{6}x_5^{6},\\
&{\rm adm}_4:= x_1^{3}x_2^{5}x_3^{6}x_4^{2}x_5^{6}, \ \ {\rm adm}_5:= x_1^{3}x_2^{5}x_3^{6}x_4^{6}x_5^{2}.
\end{array}$$
We first compute the action of the group $\Sigma_5$ on $(\mathcal Q\mathcal {P}_5)({\widetilde{\omega}_3}).$ Assume that $[u]_{\widetilde{\omega}_3}\in [(\mathcal Q\mathcal {P}_5)({\widetilde{\omega}_3})]^{\Sigma_5},$ then one has that $$ u\sim_{\widetilde{\omega}_3} \sum_{1\leq j\leq 5}\gamma_j{\rm adm}_j,$$
wherein $\gamma_j\in \mathbb F_2$ for every $j.$ For each $1\leq i\leq 4,$ when we apply the homomorphisms $\rho_i: \mathcal {P}_5\longrightarrow \mathcal {P}_5$ to the sum $\sum_{1\leq j\leq 5}\gamma_j{\rm adm}_j,$ we obtain:
$$ \begin{array}{ll}
\medskip
\rho_1(\sum_{1\leq j\leq 5}\gamma_j{\rm adm}_j) &= \gamma_1{\rm adm}_2 + \gamma_2{\rm adm}_1 +\gamma_3x_1^{5}x_2^{3}x_3^{2}x_4^{6}x_5^{6} + \gamma_4x_1^{5}x_2^{3}x_3^{6}x_4^{2}x_5^{6}  + \gamma_5x_1^{5}x_2^{3}x_3^{6}x_4^{6}x_5^{2},\\
\rho_2(\sum_{1\leq j\leq 5}\gamma_j{\rm adm}_j) &=\gamma_1x_1x_2^{6}x_3^{3}x_4^{6}x_5^{6} + \gamma_2x_1^{3}x_2^{6}x_3x_4^{6}x_5^{6} +\gamma_3x_1^{3}x_2^{2}x_3^{5}x_4^{6}x_5^{6}\\
\medskip
&\quad + \gamma_4x_1^{3}x_2^{6}x_3^{5}x_4^{2}x_5^{6}  + \gamma_5x_1^{3}x_2^{6}x_3^{5}x_4^{6}x_5^{2},\\ 
\medskip
\rho_3(\sum_{1\leq j\leq 5}\gamma_j{\rm adm}_j) &= \gamma_1{\rm adm}_1 + \gamma_2{\rm adm}_2 + \gamma_3{\rm adm}_4 +  \gamma_4{\rm adm}_3 +  \gamma_5{\rm adm}_5,\\
\medskip
\rho_4(\sum_{1\leq j\leq 5}\gamma_j{\rm adm}_j) &= \gamma_1{\rm adm}_1 + \gamma_2{\rm adm}_2 + \gamma_3{\rm adm}_3 +  \gamma_4{\rm adm}_5 +  \gamma_5{\rm adm}_4.
\end{array}$$
Applying the Cartan formula, we get
$$ \begin{array}{ll}
\medskip
x_1^{5}x_2^{3}x_3^{2}x_4^{6}x_5^{6}&\sim_{\widetilde{\omega}_3} {\rm adm}_3,\ \ x_1^{5}x_2^{3}x_3^{6}x_4^{2}x_5^{6}\sim_{\widetilde{\omega}_3} {\rm adm}_4,\ \ x_1^{5}x_2^{3}x_3^{6}x_4^{6}x_5^{2}\sim_{\widetilde{\omega}_3} {\rm adm}_5,\\
\medskip
x_1x_2^{6}x_3^{3}x_4^{6}x_5^{6}&\sim_{\widetilde{\omega}_3} {\rm adm}_1,\ \ x_1^{3}x_2^{6}x_3x_4^{6}x_5^{6}\sim_{\widetilde{\omega}_3} {\rm adm}_3,\ \ x_1^{3}x_2^{2}x_3^{5}x_4^{6}x_5^{6}\sim_{\widetilde{\omega}_3} {\rm adm}_2,\\
\medskip
x_1^{3}x_2^{6}x_3^{5}x_4^{2}x_5^{6}&\sim_{\widetilde{\omega}_3} {\rm adm}_4,\ \ x_1^{3}x_2^{6}x_3^{5}x_4^{6}x_5^{2}\sim_{\widetilde{\omega}_3} {\rm adm}_5.
\end{array}$$
Using the above calculations and the relations $\rho_i(u) +u \sim_{\widetilde{\omega}_3} 0,$ for $1\leq i\leq 4,$ we get
$$ \begin{array}{ll}
\medskip
\rho_1(u) \sim_{\widetilde{\omega}_3}  (\gamma_1 + \gamma_2)({\rm adm}_1 + {\rm adm}_2) \sim_{\widetilde{\omega}_3} 0,\\
\medskip
\rho_2(u) \sim_{\widetilde{\omega}_3}  (\gamma_2 + \gamma_3)({\rm adm}_2 + {\rm adm}_3) \sim_{\widetilde{\omega}_3} 0,\\
\medskip
\rho_3(u) \sim_{\widetilde{\omega}_3}  (\gamma_3 + \gamma_4)({\rm adm}_3 + {\rm adm}_4) \sim_{\widetilde{\omega}_3} 0,\\
\medskip
\rho_4(u) \sim_{\widetilde{\omega}_3}  (\gamma_4 + \gamma_5)({\rm adm}_4 + {\rm adm}_5) \sim_{\widetilde{\omega}_3} 0,
\end{array}$$
which imply that $\gamma_1 = \gamma_2 = \gamma_3 = \gamma_4 = \gamma_5.$ So, $[(\mathcal Q\mathcal {P}_5)({\widetilde{\omega}_3})]^{\Sigma_5} = \langle [\sum_{1\leq j\leq 5}{\rm adm}_j]_{\widetilde{\omega}_3} \rangle.$ Now, suppose that $[h]_{\widetilde{\omega}_3}\in [(\mathcal Q\mathcal {P}_5)({\widetilde{\omega}_3})]^{GL(5,\mathbb F_2)}.$ Since $\Sigma_5\subset GL(5,\mathbb F_2),$ one has $$h\sim_{\widetilde{\omega}_3}\beta({\rm adm}_1 + {\rm adm}_2+{\rm adm}_3+{\rm adm}_4+{\rm adm}_5),$$ in which $\beta\in \mathbb F_2.$ Applying the $\mathscr A$-homomorphism $\rho_5: \mathcal {P}_5\longrightarrow \mathcal {P}_5$ to the sum $\beta\sum_{1\leq j\leq 5}{\rm adm}_j,$ we obtain
$$ \rho_5(\beta\sum_{1\leq j\leq 5}{\rm adm}_j) = \beta(\sum_{2\leq j\leq 5}{\rm adm}_j + x_1x_2^{7}x_3^{2}x_4^{6}x_5^{6} + x_1x_2^{7}x_3^{6}x_4^{2}x_5^{6} + x_1x_2^{7}x_3^{6}x_4^{6}x_5^{2}).$$
Using the Cartan formula, we have 
$$ \begin{array}{ll}
\medskip
 x_1x_2^{7}x_3^{2}x_4^{6}x_5^{6} &= Sq^{1}(x_1^{2}x_2^{7}x_3x_4^{5}x_5^{6}) + Sq^{2}(x_1x_2^{7}x_3x_4^{5}x_5^{6} + x_1x_2^{7}x_3x_4^{3}x_5^{8} + x_1x_2^{9}x_3x_4^{3}x_5^{6}) \\
&\quad\quad+ \sum_{Y < x_1x_2^{7}x_3^{2}x_4^{6}x_5^{6}}Y.
\end{array}$$
Hence $x_1x_2^{7}x_3^{2}x_4^{6}x_5^{6} \sim_{\widetilde{\omega}_3}0.$ Similarly, $x_1x_2^{7}x_3^{6}x_4^{2}x_5^{6} \sim_{\widetilde{\omega}_3} 0$ and $x_1x_2^{7}x_3^{6}x_4^{6}x_5^{2}\sim_{\widetilde{\omega}_3} 0.$ These data imply that $$ \rho_5(h)\sim_{\widetilde{\omega}_3} \beta({\rm adm}_2 + {\rm adm}_3 +{\rm adm}_4 +{\rm adm}_5).$$
Therefore, by the relation $\rho_5(h)\sim_{\widetilde{\omega}_3} h,$ we have $\beta{\rm adm}_1\sim_{\widetilde{\omega}_3} 0,$ which implies $\beta = 0.$ This shows that the invariant $[(\mathcal Q\mathcal {P}_5)({\widetilde{\omega}_3})]^{GL(5,\mathbb F_2)}$ vanishes.

Now, the theorem  follows directly from the fact that ${\rm Ext}_{\mathscr A}^{5, 5+N_0}(\mathbb F_2, \mathbb F_2)  = 0$ (see \cite{W.L}, \cite{T.C}).
\end{proof}

{\bf Cases $\pmb{d \geq 1}.$} A straightforward computation, relying on the works by Lin \cite{W.L} and Chen \cite{T.C}, yields:
$${\rm Ext}_{\mathscr A}^{5, 5+N_d}(\mathbb F_2, \mathbb F_2) = \mathbb F_2\cdot h_{d+4}f_{d-1}\neq 0,\, \forall d\geq 1.
$$
It is well-known that every element in the $Sq^{0}$-families $\{h_d:\, d\geq 0\}$ and $\{f_d:\, d\geq 0\}$ is in the image of the Singer cohomological transfer (see \cite{W.S1}, \cite{T.N}). Since the "total" transfer 
$$ \varphi^{\mathscr A}_*(\mathbb F_2): \bigoplus_{s, N} [(\mathscr A\mathcal Q^{\otimes}_s)_{N}^{GL(s, \mathbb F_2)}]^{*} \longrightarrow\bigoplus_{s, N}{\rm Ext}_{\mathscr A}^{s, s+N}(\mathbb F_2, \mathbb F_2)$$
is a homomorphism of algebras, we have immediately the following.

\begin{hq}\label{hq1}
The fifth cohomological transfer
$$ \varphi_5^{\mathscr A}: [(\mathscr A\mathcal Q^{\otimes}_5)_{N_d}^{GL(5, \mathbb F_2)}]^{*} \longrightarrow{\rm Ext}_{\mathscr A}^{5, 5+N_d}(\mathbb F_2, \mathbb F_2)$$
is an epimorphism, for all $d\geq 1.$
\end{hq}

As a consequence of Theorem \ref{dlc1}, we obtain:
$$ [{\rm Ker}((\widetilde {Sq^0_*})_{(5; N_1)})]^{GL(5, \mathbb F_2)}  \subseteq  [(\mathcal Q\mathcal {P}_5)({\omega}_{(1)})]^{GL(5,\mathbb F_2)},\ \text{noting ${\omega}_{(1)} = (3,3,2,2,1).$}$$
On the other hand, for each positive integer $d,$ we have the following isomorphisms: $$ (\mathscr A\mathcal Q^{\otimes}_5)_{N_d}\cong (\mathscr A\mathcal Q^{\otimes}_5)_{N_1}= {\rm Ker}((\widetilde {Sq^0_*})_{(5; N_1)})\bigoplus (\mathscr A\mathcal Q^{\otimes}_5)_{N_0},$$ and $$[(\mathscr A\mathcal Q^{\otimes}_5)_{N_d}^{GL(5, \mathbb F_2)}]^{*}\cong (\mathscr A\mathcal Q^{\otimes}_5)_{N_d}^{GL(5, \mathbb F_2)}.$$ Consequently, thanks to Theorem \ref{dlc2}, we obtain 
$$\dim [(\mathscr A\mathcal Q^{\otimes}_5)_{N_d}^{GL(5, \mathbb F_2)}]^{*}\leq \dim [(\mathcal Q\mathcal {P}_5)({\omega}_{(1)})]^{GL(5,\mathbb F_2)},$$
for arbitrary $d\geq 1.$ Referring to the conclusion from item (iii) of Theorem \ref{dlc1}, An immediate calculation, akin to the $d = 0$ case, shows that the $GL(5, \mathbb F_2)$-invariant space $[(\mathcal Q\mathcal {P}_5)({\omega}_{(1)})]^{GL(5,\mathbb F_2)}$ is one-dimensional.  

\begin{cy} An algorithm developed within the \texttt{SAGEMATH} computer algebra system was employed to verify the dimension result for the invariant $[(\mathcal Q\mathcal {P}_5)({\omega}_{(1)})]^{GL(5,\mathbb F_2)}.$ In fact, it verified the dimension result for the invariant  $(\mathscr A\mathcal Q^{\otimes}_5)_{N_1}^{GL(5, \mathbb F_2)}.$ The full details of the algorithm are omitted here, as \textbf{they constitute a separate scientific work by us on algorithm implementation in computer algebra systems, which will be published in a specialized journal on algorithmic techniques}.

\medskip

The following outlines several fundamental steps of the algorithm. Nevertheless, this is not the entire algorithm for determining the dimension of the invariant space.

\medskip

\begin{lstlisting}[basicstyle={\fontsize{8.5pt}{9.5pt}\ttfamily},]

# Generate all GL(5, F_2) matrices
def generate_GL5_matrices():
    F = GF(2)
    G = GL(5, F)
    return [M for M in G]

# Action of a permutation on a monomial
def permutation_action(perm, monomial):
    return tuple(monomial[perm[i] - 1] for i in range(5))

# Action of a GL(5, F_2) matrix on a monomial
def GL5_action(matrix, monomial):
    return tuple(sum(matrix[i, j] * monomial[j] for j in range(5)) % 2 for i in range(5))

# Generate a basis for a given weight vector
def generate_basis(weight_vector):
    total_degree = sum(weight_vector)
    basis = []
    for partition in Partitions(total_degree, length=5):
        if all(sum(partition[i] & (1 << j) for i in range(5)) <= weight_vector[j] for j in range(len(weight_vector))):
            basis.append(tuple(partition))
    return basis

def find_invariants(basis, GL5_matrices):
    F = GF(2)
    V = VectorSpace(F, len(basis))
    invariant_space = V.subspace([V.zero()])
    
    for v in V:
        if all(v == V(GL5_action(M, sum(v[i] * b for i, b in enumerate(basis)))) for M in GL5_matrices):
            invariant_space = invariant_space + V.span([v])
    
    return invariant_space

\end{lstlisting}
\end{cy}

When we combine this result with Corollary \ref{hq1}, we achieve

\begin{thm}
The cohomological transfer
 $$\varphi_5^{\mathscr A}: [(\mathscr A\mathcal Q^{\otimes}_5)_{N_d}^{GL(5, \mathbb F_2)}]^{*} \longrightarrow{\rm Ext}_{\mathscr A}^{5, 5+N_d}(\mathbb F_2, \mathbb F_2)$$
is an isomorphism, for every positive integer $d.$
\end{thm}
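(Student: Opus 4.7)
The plan is to prove the isomorphism by a dimension-counting argument: we already know from Corollary \ref{hq1} that $\varphi_5^{\mathscr A}$ is an epimorphism for every $d\geq 1$, and from the Lin--Chen computation we know the target ${\rm Ext}_{\mathscr A}^{5,5+N_d}(\mathbb F_2,\mathbb F_2) = \mathbb F_2\cdot h_{d+4}f_{d-1}$ is one-dimensional. Thus, it suffices to show that the source $[(\mathscr A\mathcal Q^{\otimes}_5)_{N_d}^{GL(5,\mathbb F_2)}]^{*}$ is at most one-dimensional; combined with the surjectivity, the map must then be an isomorphism.

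To bound the source, I would exploit the iterated Kameko epimorphism to reduce everything to $d=1$. Since $(\widetilde{Sq^0_*})_{(5;N_d)}$ is onto and $((\widetilde{Sq^0_*})_{(5;N_d)})^{d-1}:(\mathscr A\mathcal Q^{\otimes}_5)_{N_d}\longrightarrow (\mathscr A\mathcal Q^{\otimes}_5)_{N_1}$ is an isomorphism of $GL(5,\mathbb F_2)$-modules, it is enough to bound $\dim (\mathscr A\mathcal Q^{\otimes}_5)_{N_1}^{GL(5,\mathbb F_2)}$. Using the direct sum splitting $(\mathscr A\mathcal Q^{\otimes}_5)_{N_1}\cong {\rm Ker}((\widetilde{Sq^0_*})_{(5;N_1)})\oplus (\mathscr A\mathcal Q^{\otimes}_5)_{N_0}$, together with Theorem \ref{dlc2} which shows $[(\mathscr A\mathcal Q^{\otimes}_5)_{N_0}]^{GL(5,\mathbb F_2)} = 0$, this reduces to bounding the invariants of the kernel. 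By Theorem \ref{dlc1}(i)--(ii), the kernel is isomorphic to $(\mathcal Q\mathcal P_5)(\omega_{(1)})$, so the task is reduced to showing $\dim [(\mathcal Q\mathcal P_5)(\omega_{(1)})]^{GL(5,\mathbb F_2)}\leq 1$.

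For the invariant computation, I would first determine the $\Sigma_5$-invariants by forcing the relations $\rho_j(u)+u\sim_{\omega_{(1)}} 0$ for $j=1,\ldots,4$ on a linear combination $\sum\gamma_i [\text{adm}_i]_{\omega_{(1)}}$ over an admissible basis of $(\mathcal Q\mathcal P_5)(\omega_{(1)})$; each symmetry relation reduces certain $\rho_j$-images back to admissible monomials modulo $\mathcal P_5^-(\omega_{(1)})$ via Cartan formula computations (mirroring the strategy in the proof of Theorem \ref{dlc2} for $\widetilde{\omega}_3$, but now applied to the 1891-dimensional space). Once the $\Sigma_5$-invariant subspace is pinned down, I would impose the final relation $\rho_5(h)\sim_{\omega_{(1)}} h$ to cut it down further. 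The expected output is a one-dimensional invariant subspace, which together with the surjectivity in Corollary \ref{hq1} forces $\varphi_5^{\mathscr A}$ to be an isomorphism.

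The principal obstacle is the sheer size of the 1891-dimensional admissible basis of $(\mathcal Q\mathcal P_5)(\omega_{(1)})$: carrying out the $\rho_j$-invariance reductions by hand is infeasible on the scale of the small-dimensional $\widetilde{\omega}_3$ case. To cope with this, I would (i) group the admissible monomials into orbit-classes under the $\Sigma_5$-action on the index set $\{1,2,3,4,5\}$, which collapses many equalities $\rho_j(\text{adm}_i)+\text{adm}_\sigma(i)\sim_{\omega_{(1)}} 0$ into block-diagonal constraints, and (ii) verify the final dimension with the \texttt{SAGEMATH} algorithm outlined in the paper's concluding Note, whose construction is designed precisely to compute $[(\mathcal Q\mathcal P_5)(\omega_{(1)})]^{GL(5,\mathbb F_2)}$ by running the $\rho_j$-action on the admissible basis and computing the intersection of their fixed subspaces. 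The algorithmic verification and the structural $\Sigma_5$-reduction together establish the one-dimensionality, closing the proof.
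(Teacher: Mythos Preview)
Your proposal is correct and follows essentially the same route as the paper: reduce to $d=1$ via the iterated Kameko isomorphism, split off $(\mathscr A\mathcal Q^{\otimes}_5)_{N_0}$ whose invariants vanish by Theorem~\ref{dlc2}, identify the kernel with $(\mathcal Q\mathcal P_5)(\omega_{(1)})$ via Theorem~\ref{dlc1}(i)--(ii), and then show $[(\mathcal Q\mathcal P_5)(\omega_{(1)})]^{GL(5,\mathbb F_2)}$ is one-dimensional by the $\rho_j$-method backed up by a \texttt{SAGEMATH} computation, which together with the epimorphism of Corollary~\ref{hq1} onto the one-dimensional target forces the isomorphism. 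The paper carries out exactly this argument, including the acknowledgment that the $1891$-dimensional invariant calculation is handled algorithmically rather than fully by hand.
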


%{\bf Final remark.} Referring to the conclusion from item (iii) of Theorem \ref{dlc1}, we observe that if the group $GL(5, \mathbb F_2)$-invariant $[(\mathscr A\mathcal Q^{\otimes}_5)^{\omega_{(1)}}_{N_1}]^{GL(5, \mathbb F_2)}$ is one-dimensional, then the Singer cohomological transfer becomes an isomorphism in bidegree $(5, 5+N_d)$ for every non-negative integer $d.$ However, as demonstrated above, the computation of the cohit module $(\mathscr A\mathcal Q^{\otimes}_5)^{\omega_{(1)}}_{N_1}$ poses significant challenges. Consequently, determining $[(\mathscr A\mathcal Q^{\otimes}_5)^{\omega_{(1)}}_{N_1}]^{GL(5, \mathbb F_2)}$ is even more daunting. These unresolved issues will be the focus of our future research endeavors. Nevertheless, we anticipate that the invariant space $[(\mathscr A\mathcal Q^{\otimes}_5)^{\omega_{(1)}}_{N_1}]^{GL(5, \mathbb F_2)}$ has dimension one. 

%\begin{acknow}

%I would like to express my heartfelt gratitude to the anonymous reviewer for his valuable feedback and constructive criticism, which have significantly elevated the caliber of this work. 

%I would also like to express my gratitude to Nguyen Sum (Quy Nhon University, Vietnam), a colleague and co-author on previous related articles in \cite{P.S1, P.S2}, for valuable professional exchanges.

%\end{acknow}

%\section*{Data availability}

%All data generated or analyzed during this study are included in this paper.


\begin{thebibliography}{99}

%\bibitem{BHH} 
%Bruner, R.R., H\`a, L.M., H\uhorn ng, N.H.V.: On behavior of the algebraic transfer. Trans. Amer. Math. Soc. {\bf 357} (2005), 437-487.



%\bibitem{Brunetti}
%M. Brunetti, A. Ciampella., and L.A. Lomonaco, \textit{A total Steenrod operation as homomorphism of Steenrod algebra modules},  Ric. Mat. \textbf {61} (2012), 1--17.

%\bibitem{Brunetti2}
%M. Brunetti, and L.A. Lomonaco, \textit{ A representation of the dual of the Steenrod algebra}, Ric. Mat. \textbf {63} (2014), 19--24.

\bibitem{T.C}
T.W. Chen, 
\textit{Determination of  $\mbox{Ext}^{5,*}_{\mathscr{A}}(\mathbb{Z}/2, \mathbb{Z}/2)$}, Topol. Appl. \textbf{158} (2011), 660-689. 

\bibitem{Chon-Ha0} 
P.H. Ch\ohorn n and L.M. H\`a, \textit{Lambda algebra and the Singer transfer}, C. R. Math. Acad. Sci. Paris {\bf 349} (2011), 21-23.

\bibitem{Chon-Ha}
P.H. Ch\ohorn n and L.M. H\`a, 
\textit{On May spectral sequences and the algebraic transfer,} Manuscripta Math. \textbf{138} (2012), 141-160.

\bibitem{Chon-Ha2}
P. H. Ch\ohorn n and L.M. H\`a, \textit{On the May spectral sequence and the algebraic transfer II}, Topol. Appl. \textbf{178} (2014), 372-383.

\bibitem{J.W}
A.S. Janfada and R.M.W. Wood, 
\textit{The hit problem for symmetric polynomials over the Steenrod algebra}, Math. Proc. Cambridge Philos. Soc. \textbf{133} (2002), 295-303. 

\bibitem{H.P}
N.H.V. H\uhorn ng  and F.P. Peterson, \textit{$\mathscr A$-generators for the Dickson algebra}, Trans. Am. Math. Soc. \textbf{347} (1995), 4687-4728. 

\bibitem{Hung}
N.H.V. H\uhorn ng, 
\textit{The cohomology of the Steenrod algebra and representations of the general linear groups,} Trans. Amer. Math. Soc. \textbf{357} (2005), 4065-4089.

\bibitem{Hung2}
N.H.V. H\uhorn ng and L.X. Tr\uhorn\`\ohorn ng, \textit{The Singer transfer for infinite real projective space}, Forum Math. \textbf{34} (2022), 1433-1462.

\bibitem{M.K}
 M. Kameko,
\textit{Products of projective spaces as Steenrod modules}, PhD. thesis, The Johns Hopkins University, ProQuest LLC, Ann Arbor, MI, 1990, 29 pages.

\bibitem{W.L}
W.H. Lin, 
\textit{${\rm Ext}_{\mathscr A}^{4, *}(\mathbb Z/2, \mathbb Z/2)\mbox{ {\it and} } {\rm Ext}_{\mathscr A}^{5, *}(\mathbb Z/2, \mathbb Z/2)$}, Topol. Appl. \textbf{155} (2008),  459-496.

%\bibitem{Li}
%A. Liulevicius, \textit{The factorization of cyclic reduced powers by secondary cohomology operations}, Mem. Amer. Math. Soc. No. 42 (1962), 112 pp

\bibitem{Mothebe}
M.F. Mothebe and L. Uys, \textit{Some relations between admissible monomials for the polynomial algebra}, Int. J. Math. Sci. (2015); ID 235806: 7 pp.

\bibitem{MM}
M.F. Mothebe, P. Kaelo and O. Ramatebele, \textit{Dimension formulae for the polynomialalgebra as a module over the Steenrod algebra in degrees less than or equal to12}, J. Math. Res. \textbf{8} (2016), 92-99.

\bibitem{T.N} 
T.N. Nam, 
\textit{$\mathscr A$-g\'en\'erateurs g\'en\'eriques pour l'alg\`ebre polynomiale}, Adv. Math. \textbf{186} (2004), 334-362.  

\bibitem{P.W}
D.J. Pengelley and F. Williams, 
\textit{A new action of the Kudo-Araki-May algebra on the dual of the symmetric algebras, with applications to the hit problem}, Algebr. Geom. Topol. \textbf{11} (2011), 1767-1780. 

\bibitem{F.P1}
 F.P. Peterson,
\textit{Generators of  $H^*(\mathbb{R}P^{\infty}\times \mathbb{R}P^{\infty})$ as a module over the Steenrod algebra}, Abstracts Amer. Math. Soc., Providence, RI, April 1987. 

\bibitem{F.P2}
 F.P. Peterson,
\textit{$\mathscr A$-generators for certain polynomial algebras},  Math. Proc. Cambridge Philos. Soc. \textbf{105} (1989), 311-312.  




%\bibitem{P.S2}
%\DJ.V. Ph\'uc and N. Sum, 
%\textit{On a minimal set of generators for the polynomial algebra of five variables as a module over the Steenrod algebra},  Acta Math. Vietnam. \textbf{42} (2017),  149-162.

%\bibitem{P.S2}
%\DJ.V. Ph\'uc and N. Sum, 
%\textit{On a minimal set of generators for the polynomial algebra of five variables as a module over the Steenrod algebra},  Acta Math. Vietnam. \textbf{42} (2017),  149-162. 

%\bibitem{D.P1}
%\DJ.V. Ph\'uc, 
%\textit{The hit problem for the polynomial algebra of five variables in degree seventeen and its application},  East-West J. Math. \textbf{18} (2016), 27-46.

\bibitem{D.P2}
\DJ.V. Ph\'uc, 
\textit{The "hit" problem of five variables in the generic degree and its application}, Topol. Appl. \textbf{282} (2020), 107321.

%\bibitem{D.P3}
%\DJ.V. Ph\'uc, 
%\textit{$\mathscr A$-generators for the polynomial algebra of five variables in degree $5(2^t-1) + 6.2^t$},  Commun. Korean Math. Soc. \textbf{35} (2020), 371-399. 

\bibitem{D.P3}
\DJ.V. Ph\'uc, 
\textit{A note on the modular representation on the $\mathbb Z/2$-homology groups of the fourth power of real projective space and its application}, Preprint (2021), 41 pages, arXiv:2106.14606v11, \url{https://arxiv.org/pdf/2106.14606v11}, December 21, 2021.

\bibitem{D.P4}
\DJ.V. Ph\'uc, 
\textit{On Peterson's open problem and representations of the general linear groups},  J. Korean Math. Soc. \textbf{58}  (2021),  643-702.

%\bibitem{D.P5}
%\DJ.V. Ph\'uc, 
%\textit{On generators of the unstable $\mathscr A$-module $\mathcal \mathcal {P}_5 = H^*((K(\mathbb F_2, 1))^{\times 5}, \mathbb F_2)$ in a generic degree and applications}, Preprint (2021), submitted for publication.

%\bibitem{D.P6}
%\DJ.V. Ph\'uc, 
%\textit{On the dimension of the "cohits" space $\mathbb Z_2\otimes_{\mathscr A} H^{*}((\mathbb RP(\infty))^{\times t}, \mathbb Z_2)$ and some applications},  Preprint (2021) , submitted for publication.

\bibitem{D.P7}
\DJ.V. Ph\'uc, 
\textit{On the dimension of $H^{*}((\mathbb Z_2)^{\times t}, \mathbb Z_2)$ as a module over Steenrod ring}, Topol. Appl. \textbf{303} (2021), 107856.

\bibitem{D.P7-1}
\DJ.V. Ph\'uc, \textit{A note on the hit problem for the polynomial algebra of six variables and the sixth algebraic transfer}, J. Algebra \textbf{613} (2023), 1-31 (2023).

\bibitem{D.P7-2}
\DJ.V. Ph\'uc, \textit{The affirmative answer to Singer's conjecture on the algebraic transfer of rank four}, Proc. Roy. Soc. Edinburgh Sect. A \textbf{153} (2023), 1529-1542. 

\bibitem{D.P7-3}
\DJ.V. Ph\'uc, \textit{On Singer's conjecture for the fourth algebraic transfer in certain generic degrees}, J. Homotopy Relat. Struct. \textbf{19} (2024), 431-473.

\bibitem{D.P8}
\DJ.V. Ph\'uc, 
\textit{On $\mathbb A$-generators of the cohomology $H^{*}(V^{\oplus 5}) = \mathbb Z/2[u_1, \ldots, u_5]$ and the cohomological transfer of rank 5}, Rend. Circ. Mat. Palermo, II. Ser \textbf{73} (2024), 909-1007.

%\bibitem{D.P9} 
%\DJ.V. Ph\'uc, \textit{A note on the hit problem for the polynomial algebra in the case of odd primes and its application},  Rev. Real Acad. Cienc. Exactas Fis. Nat. Ser. A-Mat. \textbf{118}, 22 (2024).

%\bibitem{D.P10}
%\DJ.V. Ph\'uc, \textit{On the fifth Singer algebraic transfer in a generic family of internal degree characterized by $\mu(n) = 4$}, J. Pure Appl. Algebra \textbf{228} (2024), 107658: 24 pages.

\bibitem{D.P11}
\DJ.V. Ph\'uc, \textit{On the dimensions of the graded space $\mathbb F_2\otimes_{\mathscr A}\mathbb F_2[x_1,x_2,\ldots ,x_s]$ at degrees $s + 5$ and its relation to algebraic transfers}, To appear in Internat. J. Algebra Comput. (2024), 57 pages, DOI: 10.1142/S0218196724500401.

\bibitem{D.P11-1}
\DJ.V. Ph\'uc, \textit{On the unresolved conjecture for the algebraic transfers over the binary field}, J. Korean Math. Soc. \textbf{61} (2024), 1249-1290.

\bibitem{D.P12}
\DJ.V. Ph\'uc, \textit{On Peterson's classical hit problem and the structure of ${\rm Ext}_{\mathscr A}^{*, *}(\mathbb Z/2, \mathbb Z/2)$ via the lens of algebraic transfers}, Submitted to Ann. Fenn. Math. (2024), 34 pages, in revision.

\bibitem{D.P12-1}
\DJ.V. Ph\'uc, \textit{On admissible $\mathcal A_2$-generators for the cohomology ring $H^*((G_1(\mathbb R^{\infty}))^{\times t}; \mathbb Z_2)$ and the (mod-2) cohomology of the Steenrod algebra $\mathcal A_2$},  Submitted to  Mosc. Math. J. (2024), 30 pages.

\bibitem{D.P13}
\DJ.V. Ph\'uc, \textit{On the image of the Singer transfer in bi-degrees $(5, 5+2^{d})$}, To appear in Vietnam J. Math. (2024).

\bibitem{P.S1}
\DJ.V. Ph\'uc and N. Sum, 
\textit{On the generators of the polynomial algebra as a module over the Steenrod algebra}, C.R.Math. Acad. Sci. Paris \textbf{353} (2015), 1035-1040.

\bibitem{W.S1}
W.M. Singer, 
\textit{The transfer in homological algebra}, Math. Z. \textbf{202} (1989), 493-523.  

\bibitem{W.S2}
W.M. Singer, \textit{Rings of symmetric functions as modules over the Steenrod algebra}, Algebr. Geom. Topol. \textbf{8} (2008), 541-562. 

\bibitem{W.S3}
W.M. Singer, \textit{On the action of the Steenrod squares on polynomial algebras}, Proc. Amer. Math. Soc. \textbf{111} (1991), 577--583. 

 \bibitem{N.S0}
N. Sum, 
\textit{The negative answer to Kameko's conjecture on the hit problem}, Adv. Math. \textbf{225} (2010), 2365-2390.

\bibitem{ST}
N. Sum and N.K. Tin, \textit{Kameko's homomorphism and the algebraic transfer}, Quy Nhon University, Preprint (2016), 41 pages.

\bibitem{N.S}
N. Sum, \textit{On a construction for the generators of the polynomial algebra as a module over the Steenrod algebra}, In: Singh, M., Song, Y., Wu, J. (eds) Algebraic Topology and Related Topics. Trends in Mathematics. Birkhäuser, Singapore, pp. 265-286 (2019).

%\bibitem{SM}
%N. Sum, and L.X. Mong
%\textit{The hit problem of five variables in the degree thirty}, East-West J. Math. \textbf{23}(2) (2022), 79-99.

\bibitem{N.S1}
N. Sum, 
\textit{On the Peterson hit problem}, Adv. Math. \textbf{274} (2015), 432-489.  

\bibitem{N.S6}
N. Sum, 
\textit{The squaring operation and the Singer algebraic transfer}, Vietnam J. Math. \textbf{49} (2021), 1079-1096.

\bibitem{N.S2}
N. Sum, \textit{The admissible monomial bases for the polynomial algebra of five variables in some types of generic degrees}, Preprint (2022),  35 pages, ArXiv:2209.12543.

%\bibitem{N.S3}
%N. Sum, \textit{A counter-example to Singer's conjecture for the algebraic transfer}, Preprint (2024),  57 pages, ArXiv:2408.06669.


%\bibitem{Tin1}
%N.K. Tin, 
%\textit{A note on the $\mathscr A$-generators of the polynomial algebra of six variables and applications}, Turkish J. Math. \textbf{46} (2022), 1911-1926.


\bibitem{Tin2}
N.K. Tin, 
\textit{On the hit problem for the Steenrod algebra in the generic degree and its applications}, Rev. Real Acad. Cienc. Exactas Fis. Nat. Ser. A-Mat. \textbf{116}:81 (2022).


%\bibitem{D.P8}
%\DJ.V. Ph\'uc, 
%\textit{On the lambda algebra and Singer's cohomological transfer},  Preprint (2021), submitted for publication.

%\bibitem{N.E}
 %N.E. Steenrod,
%\textit{Products of cocycles and extensions of mappings}, Ann. of Math. (2) \textbf{48} (1947), 290-320. 





%\bibitem{Tanay}
%B. Tanay and T. Oner, \textit{Some formulas of the action of Steenrod powers on cohomology ring of $K(\mathbb Z^{n}_p ; 2)$}, Istanbul Univ. Sci. Fac. J. Math. Phys. Astr. 4 (2013), 15-26.

%\bibitem{Turgay}
%N.D. Turgay, \textit{An alternative approach to the Adem relations in the mod 2 Steenrod algebra}, Turkish J. Math. \textbf{38} (2014), 924-934.

%\bibitem{Turgay2}
%N.D. Turgay, \textit{A remark on the conjugation in the Steenrod algebra}, Commun. Korean Math. Soc. \textbf{30} (2015), 269-276.

%\bibitem{Turgay3}
%N.D. Turgay, \textit{The mod 2 dual Steenrod algebra as a subalgebra of the mod 2 dual Leibniz-Hopf algebra}, J. Homotopy Relat. Struct. \textbf{12} (2017),  727-739.





\bibitem{W.W}
G. Walker and R.M.W. Wood, 
\textit{Polynomials and the mod 2 Steenrod Algebra: Volume 1, The Peterson hit problem}, in London Math. Soc.  Lecture Note Ser., Cambridge Univ. Press, January 11, 2018.

\bibitem{R.W}
 R.M.W. Wood, \textit{Steenrod squares of polynomials and the Peterson conjecture}, Math. Proc. Cambriges Phil. Soc. \textbf{105} (1989), 307-309.

\bibitem{R.W2}
 R.M.W. Wood, \textit{Problems in the Steenrod algebra}, Bull. London Math. Soc, \textbf{30} (1998), 194-220.



\end{thebibliography}
\end{document}